\definecolor{darkblue}{RGB}{0,0,160}
\def\e{{\rm e}}
\def\d{{\rm d}}
\def\dist{{\rm dist}}
\def\R {\mathbb{R}}
\def\N {\mathbb{N}}
\def\tiles {{\mathbf{T}}}
\def\RR {{\mathcal R}}
\def\mass{\mathsf{mass}}
\def\Q {{\mathcal Q}}
\def\M {{\mathrm M}}
\def\RR {{\mathcal R}}
\def\Z {{\mathbb Z}}
\def\1 {{\mbox{\boldmath 1}}}
\def \l {\langle}
\def \r {\rangle}
\def \and{\quad\text{and}\quad}
\def\ind{\cic{1}}
\newcommand{\cic}{\bm}
\def \no#1#2#3 {{\bf #1} (#3), #2.}
\def \eds#1#2#3 {#1, #2, #3.}
 \def\Xint#1{\mathchoice
	   {\XXint\displaystyle\textstyle{#1}}%
	   {\XXint\textstyle\scriptstyle{#1}}%
	   {\XXint\scriptstyle\scriptscriptstyle{#1}}%
	   {\XXint\scriptscriptstyle\scriptscriptstyle{#1}}%
	   \!\int}
	 \def\XXint#1#2#3{{\setbox0=\hbox{$#1{#2#3}{\int}$}
	     \vcenter{\hbox{$#2#3$}}\kern-.5\wd0}}
	 \def\avgint{\Xint-}
\let\c@equation\c@subsection
\let\c@figure\c@subsection
\let\c@subsubsection\c@subsection
\newcounter{counter}
 \numberwithin{equation}{section}
\numberwithin{counter2}{section}
\numberwithin{figure}{section}
\newtheorem{proposition}[equation]{Proposition}
\newtheorem{theorem}[counter]{Theorem}
\newtheorem{lemma}[equation]{Lemma} 
\theoremstyle{definition}
\newtheorem{definition}[equation]{Definition}
\newtheorem*{remark*}{Remark}
\newtheorem*{warn*}{A word of warning}
\newtheorem{remark}[equation]{Remark} 
\theoremstyle{plain}
\numberwithin{corollary}{counter}
\let\save@mathaccent\mathaccent
\newcommand*\if@single[3]{%
  \setbox0\hbox{${\mathaccent"0362{#1}}^H$}%
  \setbox2\hbox{${\mathaccent"0362{\kern0pt#1}}^H$}%
  \ifdim\ht0=\ht2 #3\else #2\fi
  }
\newcommand*\rel@kern[1]{\kern#1\dimexpr\macc@kerna}
\newcommand*\widebar[1]{\@ifnextchar^{{\wide@bar{#1}{0}}}{\wide@bar{#1}{1}}}
\newcommand*\wide@bar[2]{\if@single{#1}{\wide@bar@{#1}{#2}{1}}{\wide@bar@{#1}{#2}{2}}}
\newcommand*\wide@bar@[3]{%
  \begingroup
  \def\mathaccent##1##2{%
%Enable nesting of accents:
    \let\mathaccent\save@mathaccent
%If there's more than a single symbol, use the first character instead (see below):
    \if#32 \let\macc@nucleus\first@char \fi
%Determine the italic correction:
    \setbox\z@\hbox{$\macc@style{\macc@nucleus}_{}$}%
    \setbox\tw@\hbox{$\macc@style{\macc@nucleus}{}_{}$}%
    \dimen@\wd\tw@
    \advance\dimen@-\wd\z@
%Now \dimen@ is the italic correction of the symbol.
    \divide\dimen@ 3
    \@tempdima\wd\tw@
    \advance\@tempdima-\scriptspace
%Now \@tempdima is the width of the symbol.
    \divide\@tempdima 10
    \advance\dimen@-\@tempdima
%Now \dimen@ = (italic correction / 3) - (Breite / 10)
    \ifdim\dimen@>\z@ \dimen@0pt\fi
%The bar will be shortened in the case \dimen@<0 !
    \rel@kern{0.6}\kern-\dimen@
    \if#31
      \overline{\rel@kern{-0.6}\kern\dimen@\macc@nucleus\rel@kern{0.4}\kern\dimen@}%
      \advance\dimen@0.4\dimexpr\macc@kerna
%Place the combined final kern (-\dimen@) if it is >0 or if a superscript follows:
      \let\final@kern#2%
      \ifdim\dimen@<\z@ \let\final@kern1\fi
      \if\final@kern1 \kern-\dimen@\fi
    \else
      \overline{\rel@kern{-0.6}\kern\dimen@#1}%
    \fi
  }%
  \macc@depth\@ne
  \let\math@bgroup\@empty \let\math@egroup\macc@set@skewchar
  \mathsurround\z@ \frozen@everymath{\mathgroup\macc@group\relax}%
  \macc@set@skewchar\relax
  \let\mathaccentV\macc@nested@a
%The following initialises \macc@kerna and calls \mathaccent:
  \if#31
    \macc@nested@a\relax111{#1}%
  \else
%If the argument consists of more than one symbol, and if the first token is
%a letter, use that letter for the computations:
    \def\gobble@till@marker##1\endmarker{}%
    \futurelet\first@char\gobble@till@marker#1\endmarker
    \ifcat\noexpand\first@char A\else
      \def\first@char{}%
    \fi
    \macc@nested@a\relax111{\first@char}%
  \fi
  \endgroup
}
\begin{document}
%%%%%%%%%%%%%%%%%%%%%%%%%%%%%% FRONT MATTER FRONT MATTER FRONT MATTER

\setcounter{tocdepth}{1}

\title[Directional square functions]{Directional square functions}

%%%%%%%%%%%%%%%%%%%%%%%%%%%%%% AUTHORS ADRESSES GRANTS
\author[N. Accomazzo]{Natalia Accomazzo}
\address[N.\ Accomazzo]{Department of Mathematics, The University of British Columbia\\ \newline \indent Room 121, 1984 Mathematics Road, Vancouver, BC, Canada V6T 1Z2}
\email{\href{mailto:naccomazzo@math.ubc.ca}{\textnormal{naccomazzo@math.ubc.ca}}}

\author[F.\ Di Plinio]{Francesco Di Plinio} 
\address[F.\ Di Plinio]{Department of Mathematics, Washington University in Saint Louis\\ \newline \indent 1 Brookings Drive, Saint Louis, Mo 63130, USA}
\email{\href{mailto:francesco.diplinio@wustl.edu}{\textnormal{francesco.diplinio@wustl.edu}}}

\author[P.\ Hagelstein]{Paul Hagelstein}
\address[P. Hagelstein]{Department of Mathematics, Baylor University, Waco, Texas 76798}
\email{\href{mailto:paul_hagelstein@baylor.edu}{\textnormal{paul\!\hspace{.018in}\_\,hagelstein@baylor.edu}}}

\author[I.\ Parissis]{Ioannis Parissis}
\address[I.\ Parissis]{Departamento de Matem\'aticas, Universidad del Pa\'is Vasco, Aptdo. 644\\ \newline \indent  48080 Bilbao, Spain and Ikerbasque, Basque Foundation for Science, Bilbao, Spain}
\email{\href{mailto:	ioannis.parissis@ehu.es}{\textnormal{ioannis.parissis@ehu.es}}}

\author[L.\ Roncal]{Luz Roncal}
\address[L.\ Roncal]{BCAM - Basque Center for Applied Mathematics \\ \newline \indent 
48009 Bilbao, Spain, and Ikerbasque, Basque Foundation for Science, Bilbao, Spain\\ \newline \indent and Universidad del Pa\'is Vasco, Bilbao, Spain}
\email{\href{mailto:lroncal@bcamath.org}{\textnormal{lroncal@bcamath.org}}}

\thanks{N. Accomazzo is partially supported by the project PGC2018-094528-B-I00 (AEI/FEDER, UE) with acronym ``IHAIP'', project MTM-2017-82160-C2-2-P of the Ministerio de Econom\'ia y Competitividad (Spain), and grant T1247-19 of the Basque Government. F.\ Di Plinio is partially supported by the National Science Foundation under the grants DMS-1800628, DMS-2000510, DMS-2054863,. P. Hagelstein is partially supported by a grant from the Simons Foundation (\#521719 to Paul Hagelstein). I.\ Parissis is partially supported by the project PGC2018-094528-B-I00 (AEI/FEDER, UE) with acronym ``IHAIP'', grant T1247-19 of the Basque Government and IKERBASQUE. L. Roncal is supported by the Basque Government through the BERC 2018-2021 program, by the Spanish Ministry of Economy and Competitiveness MINECO: BCAM Severo Ochoa excellence accreditation SEV-2017-2018  and through project PID2020-113156GB-I00. She also acknowledges the RyC project RYC2018-025477-I and IKERBASQUE. Declarations of interest: none}
%%%%%%%%%%%%%%%%%%%%%%%%%%%%%% AUTHORS ADRESSES GRANTS

%%%%%%%%%%%%%%%%%%%%%%%%%%%%%% AMS SUBJ CLASS & KEYWORDS
\subjclass[2010]{Primary: 42B20. Secondary: 42B25}
\keywords{Directional operators, directional square functions, Rubio de Francia inequalities, directional Carleson embedding theorems, polygon multiplier}

%%%%%%%%%%%%%%%%%%%%%%%%%%%%%% FRONT MATTER FRONT MATTER FRONT MATTER

\maketitle

%%%%%%%%%%%%%%%%%%%%%%%%%%%%%% ABSTRACT ABSTRACT ABSTRACT
\begin{abstract} 
Quantitative formulations of Fefferman's counterexample for the ball multiplier are naturally linked to square function estimates for conical and directional multipliers. In this article we develop a novel framework for these square function estimates, based on a directional embedding theorem for Carleson sequences and multi-parameter time-frequency analysis techniques.   As applications we prove sharp or quantified bounds for Rubio de Francia type square functions of conical multipliers and of  multipliers adapted to rectangles pointing along $N$ directions. A suitable combination of these estimates yields a new and currently best-known  logarithmic bound for the Fourier restriction to an $N$-gon, improving on previous results of A. C\'ordoba.
 Our directional Carleson embedding extends to the weighted setting, yielding previously unknown weighted estimates for directional maximal functions and singular integrals.
\end{abstract}
%%%%%%%%%%%%%%%%%%%%%%%%%%%%%% ABSTRACT ABSTRACT ABSTRACT

\tableofcontents
 
%%%%%%%%%%%%%%%%%%%%%%%%%%%%%% SECTION SECTION SECTION
\section{Motivation and main results} 
 The celebrated theorem of Charles Fefferman from \cite{FeffBall} shows that the ball multiplier is an  unbounded operator on $L^p(\R^n)$ for all $p\neq 2$ whenever $n\geq 2$. A well-known argument  originally due to Yves Meyer, \cite{DeGuzman}, exhibits the intimate relationship of the ball multiplier with vector-valued estimates for directional singular integrals along all possible directions. Fefferman proves in \cite{FeffBall} the impossibility of such estimates by testing these vector-valued inequalities on a Kakeya set.

 Besicovitch or Kakeya sets are compact sets in the Euclidean space that contain a line segment of unit length in every direction. Sets of this type with zero Lebesgue measure do exist. However, in two  dimensions, Kakeya sets  are necessarily of full Hausdorff dimension. The question of the Hausdorff dimension of Kakeya sets can be then formulated as a question of quantitative boundedness of the Kakeya maximal function, which is a maximal directional average along rectangles of fixed eccentricity and pointing along arbitrary directions.

 The importance of the ball multiplier  for the summation of higher dimensional Fourier series, as well as its intimate connection to Kakeya sets, have motivated a host of problems in harmonic analysis which have been driving relevant research since the 1970s. Finitary or smooth models of the ball multiplier such as the polygon multiplier and the Bochner-Riesz means quantify the failure of boundedness of the ball multiplier and formalize the close relation of these operators with directional maximal and singular averages.

 This paper is dedicated to the study of a variety of operators in the plane that are all connected in one way or another with the ball multiplier. Our point of view is through the analysis of directional operators mapping into $L^p(\R^2;\ell^q)$-spaces where the inner $\ell^q$-norm is taken with respect to the set of directions. Different values of $q$ are relevant in our analysis but the cases $q=2$ and $q=\infty$ are of particular interest. On one hand,  the case $q=\infty$ arises when considering maximal directional averages and the corresponding differentiation theory along directions; see \cite{BatTrans,CDR,DPPAlg,KatzDuke} for classical and recent work on the subject. On the other hand, the case $q=2$  is especially relevant for Meyer's argument that bounds the norm  of a vector-valued directional Hilbert transform by the norm of the ball multiplier. It also arises when dealing with square functions associated to conical or directional Fourier multipliers of the type
\[
f\mapsto \{C_j f: j=1,\ldots, N\}
\]
where each $C_j$ is adapted to a different coordinate pair and the $C_j$ have disjoint or well-separated Fourier support. These estimates are directional analogues of the celebrated square function estimate for Fourier restriction to families of disjoint cubes, due to Rubio de Francia \cite{RdF}, and they  appear naturally when seeking for quantitative estimates on the $N$-gon Fourier multiplier. 
 
 While such square function 	estimates have been considered previously in the literature, and usually approached directly via weighted norm inequalities, our treatment is novel and leads to improved and in certain cases sharp estimates in terms of the cardinality of the set of directions. It rests on a  new directional Carleson measure condition and corresponding embedding theorem,  which is subsequently applied to intrinsic directional square functions of time-frequency nature.    The link between the abstract Carleson embedding theorem and the    applications is provided by directional, one and two-parameter time-frequency analysis models. The latter  allow us to reduce  estimates for directional operators to those of the  corresponding intrinsic square functions involving directional wave packet coefficients. We  note that in the fixed coordinate system case, related square functions have appeared in Lacey's work \cite{LR}, while a single-scale directional square function similar to those of Section \ref{s:isf}  is present in \cite{DP+} by Guo, Thiele, Zorin-Kranich and the second author.

 Having clarified the context of our investigation, we turn to the detailed description of our main results and techniques.

% %%%%%%%%%%%%%%%%%%%%%%%%%%%%%% SECTION SECTION SECTION
\subsection*{A new approach to directional square functions}  While we address several types of square functions associated  to directional multipliers, our analysis of each relies on a common first step. This is an  $L^4$-square function inequality for abstract Carleson measures  associated with one and two-parameter collections of rectangles in $\R^2$, pointing along a finite set of $N$ directions; this setup is  presented in Section \ref{sec:carleson} and the  central result is Theorem \ref{thm:carleson}. Section \ref{sec:carleson} builds upon the proof technique first introduced  by Katz \cite{KatzDuke} and revisited by Bateman \cite{BatTrans} in the study of sharp weak $L^2$-bounds for maximal directional operators. Our main novel contributions are the formulation of an abstract directional Carleson condition which is flexible enough to be applied in the context of time-frequency square functions, and the realization that square functions in $L^4$ can be treated in a $TT^*$-like fashion. The advancements over \cite{BatTrans,KatzDuke} also include the possibility of handling two-parameter collections of rectangles.

In Section \ref{s:isf}, we verify that the  Carleson condition, which is a necessary assumption in the directional embedding of Theorem \ref{thm:carleson}, is satisfied by the intrinsic directional wave packet coefficients  associated with certain time-frequency tile configurations, and Theorem \ref{thm:carleson} may be thus applied to obtain sharp estimates for discrete time-frequency models of directional  Rubio de Francia square function (for instance). Establishing the Carleson condition requires a precise  control of spatial tails of the wave packets: this control is obtained by a  careful use of Journ\'e's product theory lemma. 

The estimates obtained for the time-frequency model square functions are then applied to three main families of operators described below. All of them are defined in terms of an underlying set of $N$ directions. As in Fefferman's counterexample for the ball multiplier the Kakeya set is the main obstruction for obtaining uniform estimates. Depending on the type of operator the usable estimates will be restricted in the range $2<p<4$ for square function estimates or in the range $3/4<p<4$ for the self-adjoint case of the polygon multiplier. The fact that the estimates should be logarithmic in $N$ in the $L^p$-ranges above is directed by the Besicovitch construction of the Kakeya set. It is easy to see that for $p$ outside this range the only available estimates are essentially trivial polynomial estimates. Further obstructions deter any estimates for  Rubio de Francia type square function in the range $p<2$ already in the one-directional case.

\subsection*{Sharp Rubio de Francia square function estimates in the directional setting}
Section \ref{sec:cone} concerns quantitative estimates of Rubio de Francia type for the square function associated with $N$ finitely overlapping cone multipliers, of both rough and smooth type. Beginning with the seminal article of Nagel, Stein and Wainger \cite{NSW}, square functions of this type are crucial in the theory of maximal operators, in particular  along lacunary directions, see for instance \cite{PR,SS}. In the case of  $N$ uniformly spaced    cones, logarithmic estimates with unspecified dependence were proved by A.\ C\'ordoba in \cite{CorGFA} using weighted theory. 

In order to make the discussion above more precise, and to give a flavor of the results of this paper, we introduce some basic notation.  Let $\uptau\subset (0,2\uppi)$ be an interval  and consider the corresponding smooth restriction to the frequency cone subtended by $\uptau$, namely 
\[
C_\uptau^\circ f(x)    \coloneqq \int_{0}^{2\uppi} \int_{0}^\infty  \widehat f(\varrho\e^{i\vartheta}) \upbeta_\uptau\left(\vartheta\right)  \e^{ix\cdot\varrho\e^{i\vartheta} } \, \varrho\d \varrho \d \vartheta,\qquad x\in\R^2,
\]
where $\upbeta_\uptau$ is a  smooth indicator on $\uptau$, namely it is supported in $\uptau$ and is identically one on the middle half of $\uptau$.

One of the main results of this paper is a quantitative estimate for a  square function associated with the  smooth conical multipliers of a finite collection of intervals with bounded overlap. In the statement of the theorem below $\ell^2 _{\cic{\uptau}}$ denotes the $\ell^2$-norm on the finite set of directions $\cic{\uptau}$.
%%%%%%%%%%%%%%%%%%%%%%%%%%%%%% THEOREM THEOREM THEOREM
\begin{theorem} \label{thm:smoothcones} Let $\cic{\uptau}=\{\uptau\}$ be a finite collection of  intervals in $[0,2\pi)$ with bounded overlap, namely
\[
\Bigl\| \sum_{\uptau\in \cic{\uptau}} \ind_{\uptau} \Bigr\|_\infty  \lesssim 1. 
\]
We then have the square function estimate
\begin{align*}
& \left\| \{C_\uptau^\circ f\}  \right\|_{L^p(\R^2; \ell^2_{\cic{\uptau}})} \lesssim_p (\log \# \cic{\uptau} )^{\frac{1}2-\frac1p}\|f\|_p 
\end{align*} 
for $2\leq p<4$,
as well as the restricted type analogue valid for all measurable sets $E$ 
\begin{align*}
&  \left\| \{C_\uptau^\circ(f\cic{1}_E)\}  \right\|_{L^4(\R^2; \ell^2_{\cic{\uptau}})}  \lesssim (\log \# \cic{\uptau} )^{\frac{1}4}|E|^{\frac14}\|f\|_\infty.
\end{align*}
The dependence on $\#\uptau$ in the estimates above is best possible.
\end{theorem}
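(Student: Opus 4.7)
The $L^2$ endpoint is immediate by Plancherel and the bounded overlap hypothesis on $\{\upbeta_\uptau\}_{\uptau\in\cic{\uptau}}$, with absolute implicit constant. Marcinkiewicz interpolation between this and the restricted-type $L^4$ estimate then yields the full strong-type range $2\leq p<4$ with the claimed constant $(\log \#\cic{\uptau})^{1/2-1/p}$. So the substance of the theorem is the restricted $L^4$ bound; this is also where the Carleson-embedding framework of Section \ref{sec:carleson} is designed to act, via the $TT^\ast$-style treatment of $L^4$ square functions announced in the introduction.

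To bring that framework to bear, I would first discretize each smooth cone multiplier $C^\circ_\uptau$ by a Littlewood--Paley decomposition in the radial frequency variable, writing $C^\circ_\uptau=\sum_k C_{\uptau,k}$ with $C_{\uptau,k}$ frequency-supported in the polar rectangle $\{\varrho\sim 2^k,\,\vartheta\in\uptau\}$. A standard vector-valued Littlewood--Paley argument, together with Fefferman--Stein type considerations, reduces the target inequality to
\[
\Bigl\|\Bigl(\sum_{\uptau\in\cic{\uptau},\,k\in\Z} |C_{\uptau,k}(f\ind_E)|^2\Bigr)^{1/2}\Bigr\|_4 \lesssim (\log \#\cic{\uptau})^{1/4}|E|^{1/4}\|f\|_\infty.
\]
Next I would expand each $C_{\uptau,k}$ in a suitable directional wave packet basis $C_{\uptau,k}g\sim\sum_T\langle g,\phi_T\rangle\phi_T$, where the tiles $T$ are spatial rectangles of eccentricity determined by $k$ and $|\uptau|$ and oriented normal to the central angle of $\uptau$. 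The problem is thus recast as an $L^4(\ell^2)$ estimate for the wave-packet coefficients $a_T=\langle f\ind_E,\phi_T\rangle$ of a directional one- or two-parameter tile family.

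The heart of the argument, and the step I expect to be the main obstacle, is to verify that $\{|a_T|^2\}$ satisfies the directional Carleson mass condition of Section \ref{sec:carleson} with mass $\lesssim \|f\|_\infty^2 |E|$. In essence this amounts to proving
\[
\frac{1}{|R|}\sum_{T\subset R}|a_T|^2 \lesssim \|f\|_\infty^2\, \frac{|E\cap R^*|}{|R|}
\]
uniformly for every directional rectangle $R$, where $R^*$ is a mild Schwartz enlargement. The delicate point is that wave packet tails extending beyond $R$ must be gathered across scales $k$ and directions $\uptau$ without destroying the density factor $|E\cap R^*|/|R|$; following the plan of Section \ref{s:isf}, this is accomplished by a careful application of Journ\'e's product-theory lemma to the two-parameter collection of rectangles pointing along $\#\cic{\uptau}$ directions. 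With the Carleson condition verified, Theorem \ref{thm:carleson} immediately delivers the $L^4$ square function bound with constant $(\log \#\cic{\uptau})^{1/4}|E|^{1/4}$, and interpolation closes the strong-type range. Finally, the sharpness of the logarithmic exponent is read off from a Kakeya-type compression of $N$ rectangles in the spirit of Fefferman's ball multiplier counterexample: testing the inequality against the indicator function of the compression saturates the estimate up to absolute constants.
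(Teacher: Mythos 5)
Your proposal tracks the paper's proof essentially step for step: (i) reduce to the restricted $L^4$ endpoint and interpolate with the trivial $L^2$ bound, (ii) decouple radially into annuli via a (weighted) Littlewood--Paley inequality, (iii) model each piece by directional wave packets, (iv) verify the directional Carleson condition for the coefficients $\{|a_T|^2\}$ via Journ\'e's lemma, (v) invoke the Carleson embedding theorem, and (vi) test against a Besicovitch configuration for the lower bound. This is exactly the chain Lemma~\ref{lem:wLP} $\to$ Lemma~\ref{l:modelsumconical} $\to$ Theorem~\ref{thm:smoothconeintrinsic} $\to$ Theorem~\ref{thm:carleson}, with sharpness discussed in \S\ref{sec:cexconical}.

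Two imprecisions are worth flagging, though neither is a genuine gap in the strategy. First, your statement of the Carleson condition, namely $\frac{1}{|R|}\sum_{T\subset R}|a_T|^2\lesssim\|f\|_\infty^2\,|E\cap R^*|/|R|$ ``uniformly for every directional rectangle $R$,'' is a single-rectangle condition; the condition actually required by Definition~\ref{def:carleson} and verified by Lemma~\ref{l:localortho} is at the level of arbitrary \emph{collections} subordinate to a family of parallelograms in a fixed direction, with the bound $|\mathsf{sh}(\mathcal T)|$, i.e.\ the measure of the union, not the sum of measures. Passing from local $L^2$ orthogonality to this union-measure bound is precisely what Journ\'e's lemma accomplishes, and the $|E|$ dependence enters separately through the total mass $\mathsf{mass}_a\lesssim\|f\ind_E\|_2^2\lesssim|E|$ rather than through a pointwise density factor. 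Second, for the smooth cone tiles the spatial rectangles in a fixed direction $\uptau$ have \emph{fixed} eccentricity $\sim|\uptau|^{-1}$ (the eccentricity depends on $|\uptau|$ but not on the annular scale $k$), so the associated single-direction maximal operator is weak-type $(1,1)$ and Theorem~\ref{thm:carleson} applies with $\upgamma=0$. This is exactly what removes the $\log\log$ factor present in the rough conical and generic two-parameter cases and produces the claimed sharp bound; your phrase ``eccentricity determined by $k$ and $|\uptau|$'' and ``one- or two-parameter tile family'' should be tightened to reflect this, since the gain is the whole point of the smooth statement.
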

%%%%%%%%%%%%%%%%%%%%%%%%%%%%%% THEOREM THEOREM THEOREM
The sharp estimate of Theorem~\ref{thm:smoothcones} above can be suitably bootstrapped in order to provide an estimate for rough conical frequency projections; the precise statement can be found in Theorem \ref{thm:wdcones} of Section~\ref{sec:cone}. The sharpness of the estimates in Theorem~\ref{thm:smoothcones} above is discussed in \S\ref{sec:cexconical}.
  
A similar square function estimate associated with disjoint rectangular directional frequency projections is presented in Section~\ref{sec:rdf}. This is a square function that is very close in spirit to the one originally considered by Rubio de Francia in \cite{RdF}, and especially to the two-parameter version of Journ\'e from \cite{Journe} and revisited by Lacey in \cite{LR}. The novel element is the  directional aspect which comes from the fact that the frequency rectangles are allowed to point along a set of $N$ different directions. Our method of proof can deal equally well with one-parameter rectangular projections or collections of arbitrary eccentricities. As before we prove a sharp -in terms of the number of directions-  estimate for the smooth square function associated with rectangular frequency projections along $N$ directions; this is the content of Theorem~\ref{thm:rdfsmooth}. The main term in the  upper bound of Theorem~\ref{thm:rdfsmooth} matches the  logarithmic lower bound associated with the Kakeya set.

%%%%%%%%%%%%%%%%%%%%%%%%%%%%%% SECTION SECTION SECTION
\subsection*{The polygon multiplier} The square function estimates discussed above may be combined with suitable vector-valued estimates in the directional setting in order to obtain a quantitative estimate for the operator norm of the  $N$-gon multiplier, namely the  Fourier restriction to a regular $N$-gon $\mathcal{P}_N$,  
\begin{equation}
\label{e:Ngon}
T_{\mathcal{P}_N} f(x) \coloneqq \int_{\mathcal{P}_N} \widehat f(\xi) \e^{ix\cdot \xi} \, \d \xi, \qquad x\in\R^2.
\end{equation}
In Section~\ref{sec:polygon} we give the details and proof of the following quantitative estimate for the polygon multiplier.

%%%%%%%%%%%%%%%%%%%%%%%%%%%%%% THEOREM THEOREM THEOREM
\begin{theorem} \label{thm:polygon} Let $\mathcal P_{N}$ be a regular $N$-gon in $\R^2$ and $T_{\mathcal{P}_N}$ be the corresponding Fourier restriction operator defined in  \eqref{e:Ngon}.  We have the estimate
 \[
 \left\|T_{\mathcal P_N} : L^p(\R^2) \right\| \lesssim (\log N)^{4\left|\frac12-\frac1p\right|} , \qquad \frac43 <p <4.
 \]
\end{theorem}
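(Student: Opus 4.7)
The plan is to reduce Theorem \ref{thm:polygon} to the conical square function estimates of Theorems \ref{thm:smoothcones}--\ref{thm:wdcones}, combined with vector-valued estimates for directional Hilbert transforms along the $N$ edge-normal directions of $\mathcal P_N$. First, I would decompose the polygon as $\mathcal P_N=\bigcup_{j=1}^N \Delta_j$ into $N$ closed triangular sectors meeting at the center, so that
\[
T_{\mathcal P_N} \;=\; \sum_{j=1}^N T_{\Delta_j}.
\]
Each $\Delta_j$ is the intersection of an angular cone $\uptau_j$ with a half-plane bounded by the $j$-th edge $e_j$ of $\mathcal P_N$; correspondingly, the sharp multiplier factors as $T_{\Delta_j}=H_{e_j}\circ T_{\uptau_j}$, where $H_{e_j}$ is the half-plane (directional Hilbert) multiplier normal to $e_j$ and $T_{\uptau_j}$ is the rough conical frequency projection onto $\uptau_j$. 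Since the $\Delta_j$ are pairwise disjoint and $T_{\Delta_j}$ is an orthogonal projection on $L^2$ with real symbol, one has $\langle T_{\Delta_j} f,g\rangle=\langle T_{\Delta_j}f,T_{\Delta_j}g\rangle$, and duality together with Cauchy--Schwarz in $j$ yields
\[
\|T_{\mathcal P_N} f\|_p \;\le\; \Bigl\|\Bigl(\sum_{j=1}^N |T_{\Delta_j} f|^2\Bigr)^{\frac12}\Bigr\|_p \cdot \sup_{\|g\|_{p'}=1}\Bigl\|\Bigl(\sum_{j=1}^N |T_{\Delta_j} g|^2\Bigr)^{\frac12}\Bigr\|_{p'}.
\]

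Next, I would bound each of these two square functions in a common way. Using the factorization $T_{\Delta_j}=H_{e_j}\circ T_{\uptau_j}$ and commuting the $H_{e_j}$ past the $\ell^2$-sum via a vector-valued, Córdoba--Fefferman--Meyer-type estimate for the directional Hilbert transforms along the $N$ edge normals $\{e_j\}$, I expect to gain a factor $(\log N)^{\left|\frac12-\frac1r\right|}$ on $L^r(\ell^2)$ for $r\in(4/3,4)$. What remains is the rough-cone square function $\|(\sum_j |T_{\uptau_j}h|^2)^{\frac12}\|_r$, which is controlled by the bootstrap Theorem \ref{thm:wdcones} built on top of Theorem \ref{thm:smoothcones} for $r\in[2,4)$, and by the $L^{p'}$-dual of the latter for $r\in(4/3,2]$; since the $\uptau_j$ have bounded overlap (in fact disjoint), either way this step contributes an additional factor $(\log N)^{\left|\frac12-\frac1r\right|}\|h\|_r$. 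Combining the two,
\[
\Bigl\|\Bigl(\sum_{j=1}^N |T_{\Delta_j} h|^2\Bigr)^{\frac12}\Bigr\|_r \;\lesssim\; (\log N)^{2\left|\frac12-\frac1r\right|}\|h\|_r, \qquad r\in(4/3,4).
\]
Applying this bound with $r=p$ to $f$ and $r=p'$ to $g$, and using $\bigl|\tfrac12-\tfrac1p\bigr|=\bigl|\tfrac12-\tfrac1{p'}\bigr|$, yields the target estimate $(\log N)^{4\left|\frac12-\frac1p\right|}$, valid precisely in the range $4/3<p<4$ in which both Hölder exponents lie in $(4/3,4)$.

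The hard part of this plan will be the rigorous justification of the vector-valued directional Hilbert transform estimate with the prescribed logarithmic constant $(\log N)^{\left|\frac12-\frac1r\right|}$ in the full range $r\in(4/3,4)$; within the framework of this paper, I expect this to be obtained by applying the directional Carleson embedding of Theorem \ref{thm:carleson} to a time-frequency model for the half-plane multipliers along the $N$ edge normals, essentially in parallel with the treatment of the cone multipliers in Section \ref{s:isf}. A secondary technical point is the careful passage from the smooth cone cutoffs in Theorem \ref{thm:smoothcones} to the sharp rough projections $T_{\uptau_j}$ appearing in the factorization, which is exactly the content of Theorem \ref{thm:wdcones} and should cost at most a power of $\log N$ already accounted for.
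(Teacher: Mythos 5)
Your reduction $\langle T_{\mathcal P_N}f,g\rangle=\sum_j\langle T_{\Delta_j}f,T_{\Delta_j}g\rangle$ via self-adjointness of the sharp projections, and the factorization $T_{\Delta_j}=H_{e_j}\circ T_{\uptau_j}$, are clean observations. However, the plan as written has two concrete gaps, one structural and one quantitative.

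\textbf{The structural gap.} After Cauchy--Schwarz and H\"older you are committed to estimating the \emph{forward} square function $\bigl\|(\sum_j|T_{\Delta_j}h|^2)^{1/2}\bigr\|_r$ for \emph{both} $r=p$ and $r=p'$. If $p\in(2,4)$ then $p'\in(4/3,2)$, and forward square function estimates of Rubio de Francia type with logarithmic dependence on $N$ do \emph{not} hold for $r<2$: already in the one-directional case the operator norm grows like $N^{1/r-1/2}$, as the classical counterexample $\hat f=\cic{1}_{[0,N]}$, $\hat f_j=\cic{1}_{[j,j+1)}$ shows. The paper flags this explicitly in the introduction (``Further obstructions deter any estimates for Rubio de Francia type square function in the range $p<2$\dots''). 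Neither Theorem~\ref{thm:smoothcones} nor Theorem~\ref{thm:wdcones} is stated below $L^2$, and when you write ``the $L^{p'}$-dual of the latter'' you are conflating two different objects: the dual of the square function bound $\|\{C_\uptau f\}\|_{L^p(\ell^2)}\lesssim\|f\|_p$ is the decoupling-type estimate $\|\sum_\uptau C_\uptau^*g_\uptau\|_{p'}\lesssim\|\{g_\uptau\}\|_{L^{p'}(\ell^2)}$ (compare Lemma~\ref{l:Tj}), not the square function estimate at $p'$. The paper avoids this trap by proving the polygon bound only for $2\le p<4$, obtaining the range $(4/3,2]$ by self-duality of $T_{\mathcal P_N}$ itself (which \emph{is} legitimate, since $T_{\mathcal P_N}$ is essentially self-adjoint), and by using Lemma~\ref{l:Tj} in the dual role rather than a second forward square function bound.

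\textbf{The quantitative gap.} Even in the range $r\in[2,4)$ where Theorem~\ref{thm:wdcones} applies, its exponent is $1-\tfrac2r=2\bigl(\tfrac12-\tfrac1r\bigr)$, with an additional $(\log\log N)^{\frac12-\frac1r}$ loss; you quote it as $\bigl(\tfrac12-\tfrac1r\bigr)$. Carrying the correct exponent through your scheme (one power from Meyer, two from the rough cone), each square function would cost $(\log N)^{3|\frac12-\frac1r|}(\log\log N)^{|\frac12-\frac1r|}$ rather than $(\log N)^{2|\frac12-\frac1r|}$, and the product over $r=p,p'$ would land at $(\log N)^{6|\frac12-\frac1p|}$ (times $\log\log$ corrections), worse than the claimed $(\log N)^{4|\frac12-\frac1p|}$. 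The doubling of the exponent in Theorem~\ref{thm:wdcones} relative to Theorem~\ref{thm:smoothcones} comes precisely from the rough-to-smooth reduction (Lemma~\ref{lem:decoupling}), which invokes Katz's directional maximal bound and is not free.

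\textbf{What the paper actually does.} The paper's proof follows a genuinely different route: a radial cutoff splits $T_{\mathcal P_N}=T_0+T_\upkappa+O_{\mathcal P}$ into a low-frequency piece, a sequence of $\sim\log N$ nested annuli away from the boundary, and a thin shell near $\partial\mathcal P_N$. For $T_\upkappa$ one pays $\upkappa^{3/4}$ by $\ell^1$-summing the annuli, then uses C\'ordoba's $L^4$ biorthogonality \eqref{eq:cordobaoverlap} plus the \emph{smooth} directional Rubio de Francia square function (Theorem~\ref{thm:rdfsmooth} with Remark~\ref{rmrk:1paramrdf}, giving the clean $\upkappa^{1/2-1/p}$, not the doubled exponent of the rough cone theorem). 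For $O_{\mathcal P}$ one writes $T_{\mathcal P_N}T_j=H_jH_{j+1}T_j$ as a double directional Hilbert transform and pays four factors of $\upkappa^{1/2-1/p}$: one from the reverse decoupling Lemma~\ref{l:Tj} (the correct dual object), two from Meyer's Lemma~\ref{lem:meyer} for $H_jH_{j+1}$, and one from the forward smooth square function. All forward square function estimates are applied only at $p\ge2$. To salvage your approach you would need, at minimum, to (a) replace the $L^{p'}$ square function application with a decoupling estimate valid for $p'<2$, and (b) replace Theorem~\ref{thm:wdcones} with a sharper ingredient; but the double Hilbert transform appearing there (because a polygon edge is a line not through the origin, so $T_{\mathcal P}$ on a frequency sector is $H_jH_{j+1}$) and not a single half-plane cutoff suggests the paper's angular-plus-radial decomposition is not easily dispensed with.
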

%%%%%%%%%%%%%%%%%%%%%%%%%%%%%% THEOREM THEOREM THEOREM

We limit ourselves to treating the regular $N$-gon  case; however, it will be clear from the proof that this restriction may be significantly weakened by requiring instead a well-distribution type assumption on the arcs defining the polygon, similar to the one that is implicit in Theorem \ref{thm:smoothcones}.

Precise $L^p$-bounds for the $N$-gon multiplier as a function of $N$ quantify Fefferman's counterexample and so the failure of boundedness of the ball multiplier when $p\neq 2$. A logarithmic  type estimate for $T_{\mathcal{P}_N}$ was first obtained by A.\ C\'ordoba in \cite{CorPoly}. While the exact dependence in  \cite{CorPoly} is not explicitly tracked, the upper bound on the operator norm obtained in \cite{CorPoly} must be necessarily larger than $O(\log N)^{\frac54}$ for $p$ close to the endpoints of the relevant interval: see Remark \ref{rem:compcor} and \S\ref{sec:cexradial} for details.
  While  the dependence obtained in Theorem \ref{thm:polygon} is a significant improvement over previous results, it does not match the currently best known lower bound, which is the same as that for the Meyer lemma constant in Lemma~\ref{lem:meyer}  and \S\ref{sec:cexmeyer}.

%%%%%%%%%%%%%%%%%%%%%%%%%%%%%% REMARK REMARK REMARK
\begin{remark*} Let $\updelta>0$ and  $T_j$ be a smooth frequency restriction  to one of the $O(\updelta^{-1})$ tangential $\updelta \times \updelta^2$ boxes covering the $\updelta^2$ neighborhood of $\mathbb S^1$. Unlike the sharp forward square function estimate we prove in this article,  the \emph{reverse square function} estimate
 \begin{equation}
\label{e:revsf} 
\|f\|_{p} \leq C_{p,\updelta}  \left\|  \left\{T_jf: 1\leq j \leq O{\textstyle \left(\frac 1\updelta \right)} \right\}   \right\|_{L^p(\R^2:\ell^2_j)},
\end{equation}
holds with $C_{4,\updelta}=O(1)$ at the endpoint $p=4$. For the proof of this $L^4$-decoupling estimate see \cites{CorPoly,Feff}.  An extension to the range $2<p<4$ is  at the moment only possible via vector-valued methods, which introduce the loss $C_{p,\updelta}=O(|\log \updelta|^{{1}/{2}-1/p} )$. In fact \eqref{e:revsf} with the loss $C_{p,\updelta}$ claimed above follows easily from Lemma~\ref{l:Tj}; the details are contained in Remark~\ref{rmrk:invsf}.

Reverse square function inequalities of the type \eqref{e:revsf} have been popularized by Wolff in his proof of local smoothing estimates in the large $p$ regime; see also the related works \cite{GS,LP,LW,PS}. 
We refer to Carbery's note \cite{CarbA} for a proof that the $p=2n/(n-1)$ case of the  $\mathbb S^{n-1}$ reverse square function estimate implies the corresponding $L^{n}(\R^n)$ Kakeya maximal inequality, as well as the Bochner-Riesz conjecture. In \cite{CarbA}, the author also asks whether a $\updelta$-free estimate holds in the  range $2<p<2n/(n-1)$.  At the moment this is not known in any dimension.

On a different but related note,   weakening  \eqref{e:revsf} by replacing the right hand side with the larger square function of $\|f_j\|_p$ yields a sample (weak) \emph{decoupling} inequality: a full range of sharp decoupling inequalities for hypersurfaces with curvature have been established starting from the recent, seminal paper by Bourgain and Demeter \cite{BD1}. In the case of $\mathbb S^1$,  the weak decoupling inequality holds in the wider range $2\leq p \leq 6$, with $C_\upvarepsilon\updelta^{-\upvarepsilon}$ type  bounds outside of $[2,4]$: our methods do not seem to provide insights on the quantitative character of weak decoupling in this wider range. 
\end{remark*}
%%%%%%%%%%%%%%%%%%%%%%%%%%%%%% REMARK REMARK REMARK

%%%%%%%%%%%%%%%%%%%%%%%%%%%%%%% SECTION SECTION SECTION
\subsection*{Weighted estimates for the maximal directional function} The simplest example of application of the directional Carleson embedding theorem  is the adjoint of the directional maximal function; this was already noticed by Bateman  \cite{BatTrans}, re-elaborating on the approach of Katz \cite{KatzDuke}. By duality, the $L^2$-directional Carleson embedding theorem of Section~\ref{sec:carleson} yields the sharp bound
for the weak $(2,2)$ norm of the maximal Hardy-Littlewood maximal function $M_N$ along $N$ \emph{arbitrary directions}
\[
 \|M_N:L^2(\R^2)\to L^{2,\infty}(\R^2)\|\sim \sqrt{\log N};
\]
this result first appeared in the quoted article \cite{KatzDuke}  by Katz.

Theorem \ref{thm:carleson} may be extended to the directional weighted setting. We describe this extension in Section \ref{s:wcarleson}, see Theorem \ref{thm:wcarleson}, and derive several novel weighted estimates for directional maximal and singular integrals as an application. 

More specifically, our weighted Carleson embedding Theorem \ref{thm:wcarleson} yields  a Fefferman-Stein type inequality for the operator $M_N$ with sharp dependence on the number of directions; this result is the content of Theorem~\ref{thm:maxweight}. Specializing to $A_1$-weights in the directional setting yields the first sharp weighted result for the maximal function along arbitrary directions. Furthermore, Theorem \ref{thm:singweight} contains an $L^{2,\infty}(w)$-estimate for the maximal directional singular integrals along $N$ directions, for suitable directional weights $w$, with a quantified logarithmic dependence in $N$. This is a weighted counterpart of the results of \cite{Dem,DDP}.

%%%%%%%%%%%%%%%%%%%%%%%%%%%%%% SECTION SECTION SECTION
 \subsection*{Acknowledgments} The authors are grateful to Ciprian Demeter and Jongchon Kim  for fruitful discussions on  reverse square function estimates, and for providing additional references on the subject.

%%%%%%%%%%%%%%%%%%%%%%%%%%%%%% SECTION SECTION SECTION
\section{An $L^2$-inequality for directional Carleson sequences}\label{sec:carleson}
In this section we prove an abstract $L^2$-inequality for certain Carleson sequences adapted to sets of directions: the main result is Theorem \ref{thm:carleson} below.  The Carleson sequences we will consider are indexed by parallelograms with long side pointing in a given set of directions in $\mathbb{R}^2$, and possessing certain natural properties. The definitions below are motivated by the applications we have in mind, all of them lying in the realm of directional singular and averaging operators.

%%%%%%%%%%%%%%%%%%%%%%%%%%%%%% SECTION SECTION SECTION 
\subsection{Parallelograms and sheared grids}Fix a coordinate system and the associated horizontal and vertical projections
of $A\subset  \R^2$: 
\[
 \uppi_1(A)\coloneqq\left\{x\in\R:\, \{x\}\times \R \cap A \neq \varnothing\right\},\qquad \uppi_2(A)\coloneqq\left\{y\in\R:\,\R \times\{ y\} \cap A \neq \varnothing\right\}.
 \]
Fix a finite set of slopes $S\subset [-1,1]$. Throughout, we indicate by $N=\#S$ the number of elements of $S$. In general we will deal with sets of directions
\[
V\coloneqq \{(1,s):\, s\in S\},\quad V^\perp\coloneqq \{ (-s,1):\, s\in S\}.
\]
We will conflate the descriptions of directions in terms of slopes in $S$ and in terms of vectors in $V$ with no particular mention. 

For each $s\in S$ let 
\begin{equation}
\label{e:shearing}
A_s\coloneqq \left[ \begin{matrix} 1 & 0 
\\ 
s & 1 \end{matrix}\right]
\end{equation}
be the corresponding shearing matrix. A \emph{parallelogram along $s$} is the image $P=A_s(I\times J)$ of the rectangular box $I\times J$ in the fixed coordinate system with $|I|\geq |J|$. We denote the collection of parallelograms along $s$ by $\mathcal P_s^2$ and 
\[
\mathcal P_S ^2 \coloneqq \bigcup _{s\in S} \mathcal P_s^2.
\]
In order to describe the setup for our general result we introduce a collection of directional dyadic grids of parallelograms. In order to define these grids we consider the two-parameter product dyadic grid  
 \[
 \mathcal D^2 _{0}\coloneqq\left\{R=I\times J:\, I,J\in \mathcal D(\R),\/ |I|\geq |J| \right\}
 \] 
obtained by taking cartesian product of the standard dyadic grid $\mathcal D(\R)$ with itself; we note that we only consider the rectangles in $\mathcal D\times \mathcal D$ whose horizontal side is longer than their vertical one. Define the sheared grids
\[
\mathcal D^2 _{s}\coloneqq\left\{A_sR:\, R\in \mathcal D^2 _0 \right\},\quad s\in S,\qquad \mathcal D^2 _S\coloneqq \bigcup_{s\in S}\mathcal D^2 _s.
\]
We will also use the notation
\[
\mathcal D^2 _{s,k_1,k_2}\coloneqq \left\{A_sR:\, R=I\times J\in \mathcal D^2 _0,\, |I|=2^{-k_1},\, |J|=2^{-k_2} \right\},\quad s\in S,\qquad k_1,k_2 \in\Z, \quad k_1\leq k_2.
\]
Note that $\mathcal D^2 _s$ is a special subcollection of  $\mathcal P_s^2$. In particular,  $R\in \mathcal D_s ^2$ is a parallelogram oriented along $v=(1,s)$ with vertical sides parallel to the $y$-axis and such that $\uppi_1(R)$ is a standard dyadic interval. Furthermore our assumptions on $S$ and the definition of $\mathcal D_0 ^2$ imply that the parallelograms in $\mathcal D^2 _S$ have long side with slope $|s|\leq 1$ and a vertical short side. With a slight abuse of language we will continue referring to the rectangles in $\mathcal D_S ^2$ as \emph{dyadic}.

%%%%%%%%%%%%%%%%%%%%%%%%%%%%%%% FIGURE FIGURE FIGURE
\begin{figure}[ht]\label{fig:RtoRs}
\centering
\def\svgwidth{330pt}
%% Creator: Inkscape inkscape 0.91, www.inkscape.org
%% PDF/EPS/PS + LaTeX output extension by Johan Engelen, 2010
%% Accompanies image file '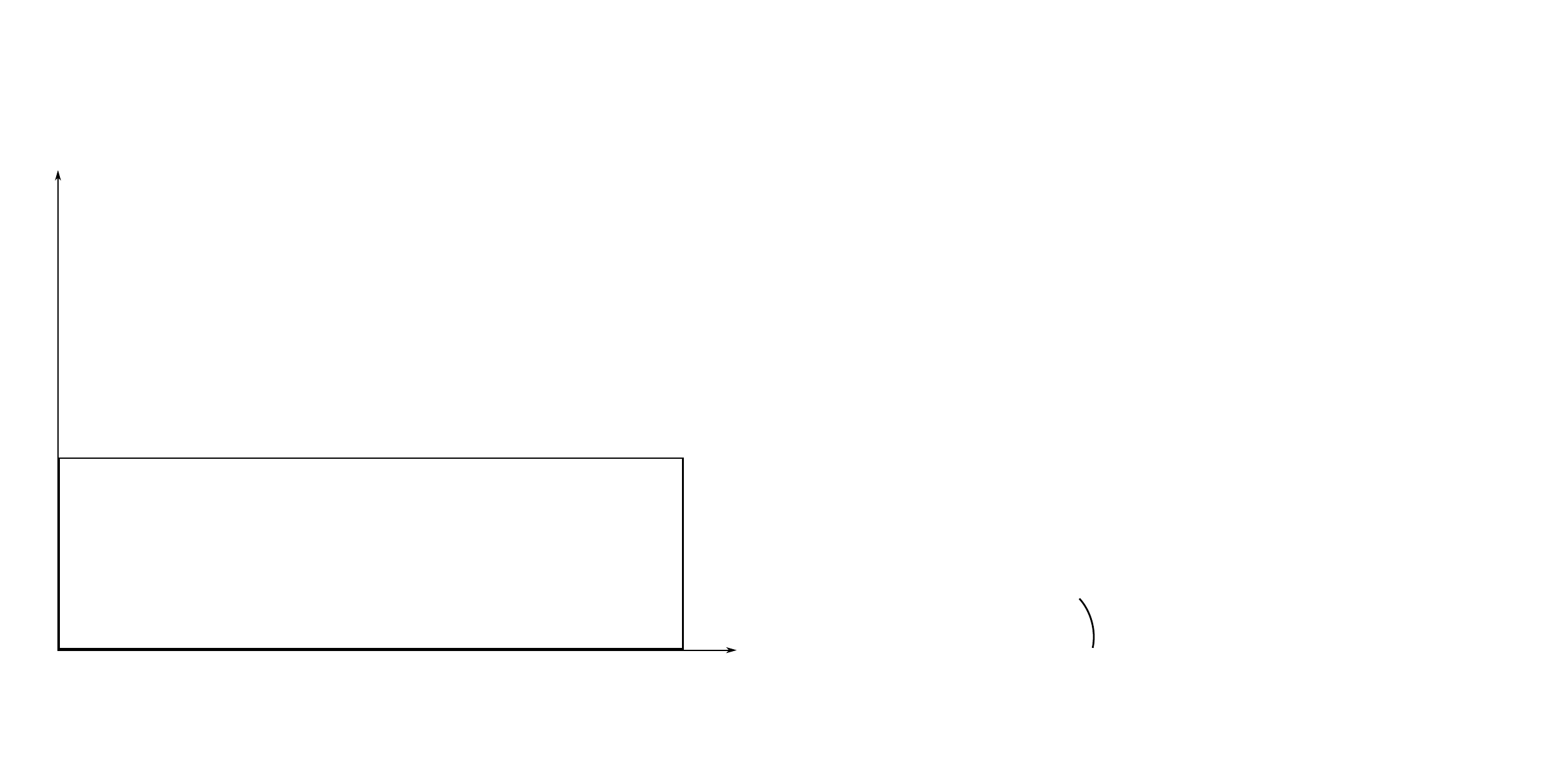' (pdf, eps, ps)
%%
%% To include the image in your LaTeX document, write
%%   \input{<filename>.pdf_tex}
%%  instead of
%%   \includegraphics{<filename>.pdf}
%% To scale the image, write
%%   \def\svgwidth{<desired width>}
%%   \input{<filename>.pdf_tex}
%%  instead of
%%   \includegraphics[width=<desired width>]{<filename>.pdf}
%%
%% Images with a different path to the parent latex file can
%% be accessed with the `import' package (which may need to be
%% installed) using
%%   \usepackage{import}
%% in the preamble, and then including the image with
%%   \import{<path to file>}{<filename>.pdf_tex}
%% Alternatively, one can specify
%%   \graphicspath{{<path to file>/}}
%% 
%% For more information, please see info/svg-inkscape on CTAN:
%%   http://tug.ctan.org/tex-archive/info/svg-inkscape
%%
\begingroup%
  \makeatletter%
  \providecommand\color[2][]{%
    \errmessage{(Inkscape) Color is used for the text in Inkscape, but the package 'color.sty' is not loaded}%
    \renewcommand\color[2][]{}%
  }%
  \providecommand\transparent[1]{%
    \errmessage{(Inkscape) Transparency is used (non-zero) for the text in Inkscape, but the package 'transparent.sty' is not loaded}%
    \renewcommand\transparent[1]{}%
  }%
  \providecommand\rotatebox[2]{#2}%
  \newcommand*\fsize{\dimexpr\f@size pt\relax}%
  \newcommand*\lineheight[1]{\fontsize{\fsize}{#1\fsize}\selectfont}%
  \ifx\svgwidth\undefined%
    \setlength{\unitlength}{723.58294505bp}%
    \ifx\svgscale\undefined%
      \relax%
    \else%
      \setlength{\unitlength}{\unitlength * \real{\svgscale}}%
    \fi%
  \else%
    \setlength{\unitlength}{\svgwidth}%
  \fi%
  \global\let\svgwidth\undefined%
  \global\let\svgscale\undefined%
  \makeatother%
  \begin{picture}(1,0.49324622)%
    \lineheight{1}%
    \setlength\tabcolsep{0pt}%
    % \put(0.42930069,0.53359248){\color[rgb]{0,0,0}\makebox(0,0)[lt]{\begin{minipage}{0.00552805\unitlength}\raggedright  \end{minipage}}}%
    \put(0,0){\includegraphics[width=\unitlength,page=1]{grids.pdf}}%
    \put(0.10598895,0.1379423){\color[rgb]{0,0,0}\makebox(0,0)[lt]{\lineheight{0}\smash{\begin{tabular}[t]{l}$R=I\times[0,1]\in\mathcal D_0 ^2$\end{tabular}}}}%
    \put(0,0){\includegraphics[width=\unitlength,page=2]{grids.pdf}}%
    \put(0.66037296,0.17032085){\color[rgb]{0,0,0}\makebox(0,0)[lt]{\lineheight{0}\smash{\begin{tabular}[t]{l}$A_s R\in\mathcal D_s ^2$\end{tabular}}}}%
    \put(-0.00125515,0.19638162){\color[rgb]{0,0,0}\makebox(0,0)[lt]{\lineheight{0}\smash{\begin{tabular}[t]{l}$1$\end{tabular}}}}%
    \put(0.52627843,0.19559189){\color[rgb]{0,0,0}\makebox(0,0)[lt]{\lineheight{0}\smash{\begin{tabular}[t]{l}$1$\end{tabular}}}}%
    \put(0.53338587,0.07239541){\color[rgb]{0,0,0}\makebox(0,0)[lt]{\lineheight{0}\smash{\begin{tabular}[t]{l}$0$\end{tabular}}}}%
    \put(0.72181329,0.09434969){\color[rgb]{0,0,0}\makebox(0,0)[lt]{\lineheight{0}\smash{\begin{tabular}[t]{l}$ \tan \theta=s$\end{tabular}}}}%
    \put(0,0){\includegraphics[width=\unitlength,page=3]{grids.pdf}}%
    \put(0.4417783,0.31168086){\color[rgb]{0,0,0}\makebox(0,0)[lt]{\lineheight{0}\smash{\begin{tabular}[t]{l}$A_s$\end{tabular}}}}%
    \put(0.00032428,0.07358004){\color[rgb]{0,0,0}\makebox(0,0)[lt]{\lineheight{0}\smash{\begin{tabular}[t]{l}$0$\end{tabular}}}}%
    \put(0,0){\includegraphics[width=\unitlength,page=4]{grids.pdf}}%
    \put(0.22571024,0.00292328){\color[rgb]{0,0,0}\makebox(0,0)[lt]{\lineheight{0}\smash{\begin{tabular}[t]{l}$I$\end{tabular}}}}%
    \put(0,0){\includegraphics[width=\unitlength,page=5]{grids.pdf}}%
    \put(0.75654146,0.00529166){\color[rgb]{0,0,0}\makebox(0,0)[lt]{\lineheight{0}\smash{\begin{tabular}[t]{l}$I$\end{tabular}}}}%
  \end{picture}%
\endgroup%

\caption{The axis-parallel rectangle $R\in\mathcal D^2 _0$ is mapped to the slanted parallelogram $A_sR\in\mathcal D^2 _s$.}
\end{figure}
%%%%%%%%%%%%%%%%%%%%%%%%%%%%%%% FIGURE FIGURE FIGURE

Several results in this paper will involve collections of  parallelograms $\mathcal R\subset \mathcal D^2 _S$. Writing $\mathcal R_s\coloneqq \mathcal R \cap\mathcal D_s ^2$ we have the natural decomposition of $\mathcal R $ into $\#S=N$ subcollections 
\[
\mathcal R  =\bigcup_{s\in S} \mathcal R_s.
\]
In general for any collection $\mathcal R$ of parallelograms we will use the notation
\[
\mathsf{sh}(\mathcal R)\coloneqq \bigcup_{R\in\mathcal R} R
\]
for the \emph{shadow} of the collection. Finally, for any collection of parallelograms $\mathcal R$ we define the corresponding maximal operator
\[
\mathrm{M}_{\mathcal R} f(x) \coloneqq \sup_{R\in \mathcal R} \l  |f| \r_R \cic{1}_R(x),\qquad f\in L^1 _{\mathrm{loc}}(\R^2),\quad x\in\R^2.
\]
We will also use the following notations for directional maximal functions:
\begin{equation}\label{eq:dirMv}
\M_v f(x)\coloneqq \sup_{r>0} \frac{1}{2r}\int_{-r} ^r |f(x+tv)|\,\d t,\qquad \M_jf(x)\coloneqq \M_{e_j}f(x), \quad j\in\{1,2\},\quad x\in\R^2.
\end{equation}

If $V\subset  \mathbb \R^2$ is a compact  set of directions with $0\notin V$, we  write 
\begin{equation}
\label{e:Mv}
\M_Vf\coloneqq \sup_{v\in V} \M_v f.
\end{equation}

In the definitions above and throughout the paper we use the notation 
\[
\l g \r_{E} = \avgint_E g \coloneqq \frac{1}{|E|}\int_E g(x)\,\d x
\]
whenever $g$ is a locally integrable function in $\R^2$ and $E\subset \R^2$ has finite measure.

%%%%%%%%%%%%%%%%%%%%%%%%%%%%%% SECTION SECTION SECTION
\subsection{An embedding theorem for directional Carleson sequences}
In this section we will be dealing with Carleson-type sequences $a=\{a_R\}_{R\in\mathcal D_S ^2}$, indexed by dyadic parallelograms. In order to define them precisely we need a preliminary notion.
 %%%%%%%%%%%%%%%%%%%%%%%%%%%%%% DEFINITION DEFINITION DEFINITION
\begin{definition} Let $\mathcal L\subset  \mathcal P_S ^2$ be a collection of  parallelograms and let $s\in S$. We will say that $\mathcal L$ is subordinate to a collection $\mathcal T\subset \mathcal P_s^2$ if for each $L\in\mathcal L$ there exists $T\in\mathcal T$ such that $L\subset  T$; see Figure~\ref{fig:subordinate}.
\end{definition}
%%%%%%%%%%%%%%%%%%%%%%%%%%%%%% DEFINITION DEFINITION DEFINITION

%%%%%%%%%%%%%%%%%%%%%%%%%%%%%%% FIGURE FIGURE FIGURE
\begin{figure}[ht]
\centering
\def\svgwidth{330pt}
%% Creator: Inkscape inkscape 0.91, www.inkscape.org
%% PDF/EPS/PS + LaTeX output extension by Johan Engelen, 2010
%% Accompanies image file '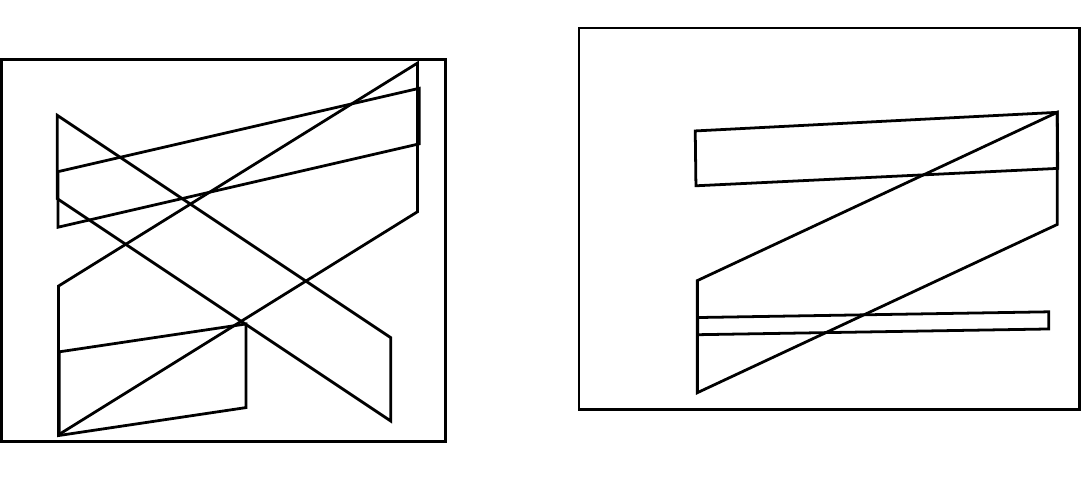' (pdf, eps, ps)
%%
%% To include the image in your LaTeX document, write
%%   \input{<filename>.pdf_tex}
%%  instead of
%%   \includegraphics{<filename>.pdf}
%% To scale the image, write
%%   \def\svgwidth{<desired width>}
%%   \input{<filename>.pdf_tex}
%%  instead of
%%   \includegraphics[width=<desired width>]{<filename>.pdf}
%%
%% Images with a different path to the parent latex file can
%% be accessed with the `import' package (which may need to be
%% installed) using
%%   \usepackage{import}
%% in the preamble, and then including the image with
%%   \import{<path to file>}{<filename>.pdf_tex}
%% Alternatively, one can specify
%%   \graphicspath{{<path to file>/}}
%% 
%% For more information, please see info/svg-inkscape on CTAN:
%%   http://tug.ctan.org/tex-archive/info/svg-inkscape
%%
\begingroup%
  \makeatletter%
  \providecommand\color[2][]{%
    \errmessage{(Inkscape) Color is used for the text in Inkscape, but the package 'color.sty' is not loaded}%
    \renewcommand\color[2][]{}%
  }%
  \providecommand\transparent[1]{%
    \errmessage{(Inkscape) Transparency is used (non-zero) for the text in Inkscape, but the package 'transparent.sty' is not loaded}%
    \renewcommand\transparent[1]{}%
  }%
  \providecommand\rotatebox[2]{#2}%
  \ifx\svgwidth\undefined%
    \setlength{\unitlength}{311.20868bp}%
    \ifx\svgscale\undefined%
      \relax%
    \else%
      \setlength{\unitlength}{\unitlength * \real{\svgscale}}%
    \fi%
  \else%
    \setlength{\unitlength}{\svgwidth}%
  \fi%
  \global\let\svgwidth\undefined%
  \global\let\svgscale\undefined%
  \makeatother%
  \begin{picture}(1,0.44220777)%
    \put(0,0){\includegraphics[width=\unitlength,page=1]{subordinate.pdf}}%
    \put(0.09935607,0.34055067){\color[rgb]{0,0,0}\makebox(0,0)[lb]{\smash{$\mathcal L$}}}%
    \put(0,0){\includegraphics[width=\unitlength,page=2]{subordinate.pdf}}%
    \put(0.16325441,0.41877092){\color[rgb]{0,0,0}\makebox(0,0)[lb]{\smash{$T \in \mathcal T$}}}%
    \put(0.69867839,0.00453373){\color[rgb]{0,0,0}\makebox(0,0)[lb]{\smash{$T' \in \mathcal T$}}}%
    \put(0,0){\includegraphics[width=\unitlength,page=3]{subordinate.pdf}}%
    \put(0.55105117,0.28289539){\color[rgb]{0,0,0}\makebox(0,0)[lb]{\smash{$\mathcal L$}}}%
    \put(0,0){\includegraphics[width=\unitlength,page=4]{subordinate.pdf}}%
  \end{picture}%
\endgroup%
\caption{A collection $\mathcal L$ subordinate to a collection $\mathcal T\subset \mathcal P_0 ^2$.}
\label{fig:subordinate}
\end{figure}
%%%%%%%%%%%%%%%%%%%%%%%%%%%%%%% FIGURE FIGURE FIGURE

It is important to stress that collections $\mathcal L$ are subordinate to rectangles $\mathcal T\subset \mathcal P_s^2$ \emph{having a fixed slope $s$}.
The Carleson sequences $a=\{a_R\}_{R\in\mathcal R}$ we will be considering will fall under the scope of the following definition. 

%%%%%%%%%%%%%%%%%%%%%%%%%%%%%% DEFINITION DEFINITION DEFINITION
\begin{definition}\label{def:carleson}Let $a=\{a_R\}_{R\in\mathcal D_S ^2}$ be a sequence of nonnegative numbers. Then $a$ will be called an \emph{$L^\infty$-normalized} Carleson  sequence if for every $\mathcal L \subset \mathcal D_S ^2$ which is subordinate to some collection $\mathcal T \subset  \mathcal P^2_\uptau$ for some  fixed $\uptau\in S$, we have
	\[
\sum_{L\in\mathcal L}a_L\leq  |\mathsf{sh}(\mathcal T )|
	\]
and the quantity
	\[
	 \mathsf{mass}_a \coloneqq \sum_{R\in\mathcal D_S ^2} a_R 
	\]
	is finite.
Given a Carleson sequence $a=\{a_R: R\in\mathcal D_S ^2 \}$ and a collection $\mathcal R\subset  \mathcal D_S ^2$ we   define the corresponding \emph{balayage} 
\[
T_{\mathcal R}(a)(x)  \coloneqq \sum_{R\in\mathcal R} a_R \frac{\ind_R(x)}{|R|},\qquad x\in\R^2.
\]
We write $T(a)$ for $T_\mathcal R(a)$ when $\mathcal R=\mathcal D_S ^2$.
%%%%%%%%%%%%%%%%%%%%%%%%%%%%%% DEFINITION DEFINITION DEFINITION
For $1\leq p \leq 2$ we then define the balayage norms   
\[
	\mathsf{mass}  _{a,p}   (\mathcal R)\coloneqq \left\|  T_{\mathcal R}(a) \right\|_{L^p}.
\]
Note that $\mathsf{mass}_{a,1} (\mathcal R)=\sum_{R\in\mathcal R} a_R\leq\mathsf{mass}_a$.
\end{definition}

%%%%%%%%%%%%%%%%%%%%%%%%%%%%%% REMARK REMARK REMARK
\begin{remark}[Elementary properties of $\mathsf{mass}$]\label{rmrk:dircarl} Let $ \mathcal R  \subset  \mathcal D_\uptau ^2 $ for some fixed $\uptau\in S$. Then $\mathcal R$ is subordinate to itself and if $a$ is an $L^\infty$-normalized Carleson sequence we have
	\[
	\mathsf{mass}_{a,1} (\mathcal R)=\sum_{R\in\mathcal R }a_R\leq  |\mathsf{sh}(\mathcal R )|,\qquad \mathcal R \subset \mathcal D_\uptau ^2\quad\text{for some fixed}\,\, \uptau\in S.
	\]
%%%%%%%%%%%%%%%%%%%%%%%%%%%%%% REMARK REMARK REMARK
Also, the very definition of $\mathsf{mass}$ and the $\log$-convexity of the $L^p$-norm imply 
\begin{equation}\label{eq:convex}
\mathsf{mass}_{a,p}  (\mathcal R)\leq \mathsf{mass}_{a,1} (\mathcal R) ^{1-\frac{2}{p'}}\mathsf{mass}_{a,2}  (\mathcal R) ^\frac{2}{p'} 
\end{equation}
for all $1\leq p \leq 2$, with $p'$ its dual exponent.
\end{remark}
We are now ready to state the main result of this section. The result below should be interpreted as a reverse H\"older-type bound for the balayages of directional Carleson sequences.
%%%%%%%%%%%%%%%%%%%%%%%%%%%%%% THEOREM THEOREM THEOREM
\begin{theorem} 
\label{thm:carleson}Let $S\subset[-1,1]$ be a finite set of $N$ slopes and $\mathcal R\subset  \mathcal D_S ^2$. Suppose that    the maximal operators $\{\M_{\mathcal R_s}: s\in S \}$ satisfy
\[
\sup_{s\in S}\big\| \M_{\mathcal R_s}: \, L^p\to L^{p,\infty}\big\|\lesssim (p')^\upgamma,\qquad p\to 1^+
\] 
for some $\upgamma\geq 0$. 
Then for every $L^\infty$-normalized Carleson sequence $a=\{a_R\}_{R\in\mathcal D_S ^2}$   
	\[
	\mathsf{mass}_{a,2}(\mathcal R)   \lesssim    (\log N)^{\frac12} \big((1+\upgamma)\log\log N\big)^{\frac\upgamma2}\mathsf{mass}_{a,1}(\mathcal R) ^\frac12.
	\]
\end{theorem}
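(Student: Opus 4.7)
The plan is to combine a $TT^{\ast}$-style duality identity, a layer-cake decomposition, and a Katz--Bateman-type recursive covering over slopes. First, set $f := T_{\mathcal R}(a)\geq 0$. Since $\|f\|_p = \mathsf{mass}_{a,p}(\mathcal R)$ for every $1\leq p\leq 2$, the $TT^\ast$ identity
\[
\mathsf{mass}_{a,2}(\mathcal R)^{2} = \|f\|_{2}^{2} = \int f\cdot T_{\mathcal R}(a)\,\d x = \sum_{R\in\mathcal R} a_{R} \langle f\rangle_{R}
\]
reduces the theorem to the single ``testing'' inequality
\[
\sum_{R\in\mathcal R} a_{R} \langle f\rangle_{R} \lesssim L^{2}\,\mathsf{mass}_{a,1}(\mathcal R),\qquad L := (\log N)^{1/2}\bigl((1+\gamma)\log\log N\bigr)^{\gamma/2}.
\]
The log-convexity \eqref{eq:convex} provides access to $\mathsf{mass}_{a,p}$ for intermediate $p\in(1,2)$, which will feed into the final optimization.

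Next, a layer-cake decomposition rewrites the left-hand side as $\int_0^\infty G(\lambda)\,\d\lambda$, with $G(\lambda):=\sum_{R\in\mathcal R,\,\langle f\rangle_R>\lambda} a_R$. Fix $\lambda$ and $s\in S$; the subcollection $\mathcal R^\lambda_s := \{R\in\mathcal R_s : \langle f\rangle_R>\lambda\}\subset \mathcal P_s^2$ is subordinate to itself. Applying Definition~\ref{def:carleson} with $\mathcal T=\mathcal R^\lambda_s$, and using that $\mathsf{sh}(\mathcal R^\lambda_s)\subset\{\M_{\mathcal R_s} f>\lambda\}$, gives
\[
\sum_{R\in\mathcal R^\lambda_s} a_R \leq |\mathsf{sh}(\mathcal R^\lambda_s)|\leq |\{\M_{\mathcal R_s} f > \lambda\}|,\qquad G(\lambda)\leq \sum_{s\in S} |\{\M_{\mathcal R_s} f>\lambda\}|.
\]

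The decisive step, and the main obstacle, is to bound the right-hand side above with only a logarithmic loss in $N=\#S$. A naive application of the weak-type hypothesis $\|\M_{\mathcal R_s}\|_{L^p\to L^{p,\infty}}\lesssim (p')^\gamma$ to each summand, followed by summation in $s$, produces a linear factor of $N$, not $\log N$. To recover the correct logarithmic dependence I implement the Katz--Bateman recursive selection-of-heaviest-direction scheme: at each step one identifies a ``best'' direction, removes the heavy rectangles oriented along it, and re-runs the procedure on the reduced collection. Termination occurs after $O(\log N)$ essential steps, which is precisely the source of the $\sqrt{\log N}$ gain. In the present two-parameter setting the one-parameter argument of \cite{KatzDuke,BatTrans} needs a genuine refinement because the elements of $\mathcal D_S^2$ are indexed by two scale parameters rather than one; this is the technical heart of Section~\ref{sec:carleson}. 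The outcome is an estimate of the form
\[
G(\lambda)\lesssim C(\log N, p, \gamma)\,\|f\|_p^{p}\,\lambda^{-p},\qquad p\in(1,2),
\]
with $C(\log N, p, \gamma)$ having the expected linear-in-$\log N$ dependence and polynomial-in-$p'$ dependence of degree tied to $\gamma$.

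The proof is completed by optimization. Split the $\lambda$-integral at a threshold $\Lambda>0$: for $\lambda\leq\Lambda$ use the trivial bound $G(\lambda)\leq\mathsf{mass}_{a,1}(\mathcal R)$, controlling the short-range contribution by $\Lambda\mathsf{mass}_{a,1}(\mathcal R)$; for $\lambda>\Lambda$ combine the bound on $G(\lambda)$ above with the consequence $\|f\|_p^{p}\leq\mathsf{mass}_{a,1}(\mathcal R)^{2-p}\mathsf{mass}_{a,2}(\mathcal R)^{2(p-1)}$ of \eqref{eq:convex}. Optimizing first in $\Lambda$ and then in $p\in(1,2)$, the optimal choice $p-1\simeq ((1+\gamma)\log\log N)^{-1}$, so that $p'\simeq (1+\gamma)\log\log N$, balances the contributions and yields the factor $L=(\log N)^{1/2}((1+\gamma)\log\log N)^{\gamma/2}$ claimed in the theorem.
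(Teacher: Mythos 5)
Your proposal correctly identifies the high-level architecture of the argument: duality and a layer-cake decomposition, the fact that the Carleson condition per slope reduces the problem to level sets of the directional maximal operators $\M_{\mathcal R_s}$, the relevance of a Katz--Bateman-type recursive covering, log-convexity of $\mathsf{mass}_{a,p}$, and a final optimization that sets $p'\sim(1+\upgamma)\log\log N$. All of this matches the paper's strategy, and the easy steps you carry out (the identity $\mathsf{mass}_{a,2}^2=\sum_R a_R\langle f\rangle_R$, the observation $\mathsf{sh}(\mathcal R^\lambda_s)\subset\{\M_{\mathcal R_s}f>\lambda\}$ via the per-slope Carleson property) are correct.

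However, the decisive step is asserted rather than proved, and it is not merely a technicality that can be filled in within your framework. You declare that the Katz--Bateman recursion yields $G(\lambda)\lesssim C(\log N,p,\upgamma)\|f\|_p^p\lambda^{-p}$ with $C$ linear in $\log N$ and polynomial in $p'$, and then optimize. But the paper's iteration does not operate on the level-set function $G(\lambda)$ of the full averages $\langle f\rangle_R$; it operates on the one-sided quantities $B_R^{\mathcal L}=\avgint_R\sum_{Q\in\mathcal L,\,Q\leq R}a_Q\frac{\ind_Q}{|Q|}$, where the partial order $Q\leq R$ (from \eqref{eq:partorder}) only counts rectangles with $\uppi_1(Q)\subseteq\uppi_1(R)$. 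This restriction, introduced through the factor-of-$2$ $TT^{\ast}$ inequality \eqref{e:badnessnorm}, is essential to the inside--outside splitting in Lemma~\ref{lem:iter}: one removes a subcollection $\mathcal L_1\subset\mathcal L$ with $\mass_{a,1}(\mathcal L_1)\leq\frac12\mass_{a,1}(\mathcal L)$ so that $B_R^{\mathcal L}\leq\uplambda+B_R^{\mathcal L_1}$ for the heavy rectangles, and the geometric control needed (the $3K$, $9K$ dilations, Journ\'e-type localization) lives precisely in the constraint $\uppi_1(Q)\subseteq\uppi_1(R)$. Your formulation with full averages $\langle f\rangle_R$ discards this structure, and it is not clear the mass-halving recursion can be run.

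A second, more serious gap is the absence of the bootstrap. In the paper, the recursion threshold is $\uplambda\sim(p')^\upgamma\,\mathsf{U}_2(\mathcal R)^{2/p'}$, i.e.\ it depends explicitly on the very quantity $\mathsf{U}_2(\mathcal R)$ one is trying to bound; the exponential shadow decay of Lemma~\ref{lem:shadow} and the final estimate \eqref{eq:main_split} therefore produce the implicit inequality $\mathsf{U}_2(\mathcal R)^2\lesssim(\log\mathsf{U}_2(\mathcal R))^{\upgamma}\log N$, which is then closed using the a priori bound of Lemma~\ref{l:apriori}. Your proposal produces an ostensibly unconditional bound on $G(\lambda)$ and then optimizes in $\Lambda$ and $p$, skipping the self-referential step entirely. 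Without exhibiting an iteration lemma and explaining how the resulting constant closes on itself, the claimed form of $C(\log N,p,\upgamma)$ is unjustified, and the argument as written does not constitute a proof.
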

%%%%%%%%%%%%%%%%%%%%%%%%%%%%%% THEOREM THEOREM THEOREM
The proof of Theorem \ref{thm:carleson} occupies the next subsection. The argument relies on several lemmata, whose proof is postponed to the final Subsection \ref{ss2:lp}.
\begin{remark} There are essentially two cases in the assumption of Theorem~\ref{thm:carleson} above. If for each $s\in S$ the family $\mathcal R_s$ happens to be a one-parameter family, then the corresponding maximal operator $\M_{\mathcal R_s}$ is of weak-type $(1,1)$, whence the assumption holds with $\upgamma=0$. In the generic case that $\mathcal R=\mathcal D_S ^2$ then for each $s$ the operator $\M_{\mathcal R_s}=\M_{\mathcal D^2 _s}$ is a skewed copy of the strong maximal function and the assumption holds with $\upgamma=1$. 	
\end{remark}
%%%%%%%%%%%%%%%%%%%%%%%%%%%%%% REMARK REMARK REMARK 

%%%%%%%%%%%%%%%%%%%%%%%%%%%%%% SECTION SECTION SECTION
\subsection{Main line of proof of Theorem \ref{thm:carleson}} \label{ss2:mlp} Throughout the proof, we use  the following partial order between parallelograms $Q,R\in\mathcal D_S ^2$:
 \begin{equation}\label{eq:partorder}
 Q\leq R \overset{\text{def}}{\iff}  Q\cap R\neq \varnothing,\,\uppi_1(Q)\subseteq  \uppi_1(R).
 \end{equation}
Notice  that, since $Q,R\in\mathcal D_S ^2$,  we have  that $\uppi_1(R),\uppi_1(Q)$ belong to the standard dyadic grid $\mathcal D$  on $\R$.

It is convenient to encode the main inequality of Theorem \ref{thm:carleson} by means of  the following dimensionless quantity associated with a collection $\mathcal R\subset  \mathcal D_S ^2$ and a Carleson sequence $a=\{a_R\}_{R\in\mathcal D_S ^2}$
\[
{\mathsf U} _p (\mathcal R)  \coloneqq \sup_{\substack{\mathcal L\subset  \mathcal R\\ a=\{a_R\}}}\frac{\mass_{a,p}(\mathcal L)}{\mass_{a,1}(\mathcal L)^\frac1p} ,
\]
where the supremum is taken over all finite subcollections   $\mathcal L\subset  \mathcal R$ and all $L^\infty$-normalized Carleson sequences $a=\{a_R\}_{R\in\mathcal D_S ^2}$. % When $\mathcal R$ is the whole collection $\mathcal D_S ^2$ we just write ${\mathsf U}_p(N)$ for ${\mathsf U} _p (\mathcal R)$.
There is  an easy, albeit lossy,  \emph{a priori} estimate for $\mathsf{U}_p(\mathcal R)$ for general $\mathcal R\subset  \mathcal D_S ^2$.
%%%%%%%%%%%%%%%%%%%%%%%%%%%%%% LEMMA LEMMA LEMMA
\begin{lemma}\label{l:apriori} Let $S\subset[-1,1]$ be a finite set of $N$ slopes and  $a=\{a_R\}_{R\in\mathcal R}$ be a normalized Carleson sequence as above. For every $\mathcal R\subset  \mathcal D_S ^2$ we have the estimate
	\[
   \mathsf U_p(\mathcal R)\lesssim N^{\frac{1}{p'}} \sup_{s\in S}\big\|\M_{\mathcal R_s}:L^{p'}\to L^{p',\infty}\big\|,\qquad 1<p<\infty.
	\]
\end{lemma}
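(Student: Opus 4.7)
The plan is to reduce to the single-slope case by the triangle inequality, then run a layer-cake interpolation that plays the trivial Carleson bound off the assumed weak-type inequality for the maximal operator.

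\textbf{Step 1: reduction to a single slope.} Fix a finite $\mathcal L \subset \mathcal R$ and a Carleson sequence $a$, and split $\mathcal L = \bigcup_{s\in S}\mathcal L_s$ where $\mathcal L_s \coloneqq \mathcal L \cap \mathcal D^2_s$. Then $T_{\mathcal L}(a) = \sum_{s\in S} T_{\mathcal L_s}(a)$, so the triangle inequality gives
\[
\mass_{a,p}(\mathcal L) = \|T_{\mathcal L}(a)\|_p \leq \sum_{s\in S} \|T_{\mathcal L_s}(a)\|_p = \sum_{s\in S}\mass_{a,p}(\mathcal L_s).
\]

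\textbf{Step 2: the fixed-slope estimate.} Fix $s\in S$. Any subcollection $\mathcal L' \subseteq \mathcal L_s \subset \mathcal P^2_s$ is subordinate to itself, so by Definition~\ref{def:carleson},
\[
\sum_{L\in\mathcal L'} a_L \leq |\mathsf{sh}(\mathcal L')|.
\]
By duality we have $\mass_{a,p}(\mathcal L_s) = \sup_{\|g\|_{p'}\leq 1}\sum_{L\in\mathcal L_s} a_L \langle g\rangle_L$. Fix such a $g\geq 0$ and apply layer-cake at level $\lambda$ to the super-level sets $\mathcal L_s(\lambda) \coloneqq \{L\in\mathcal L_s : \langle g\rangle_L > \lambda\}$. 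Using the Carleson bound on $\mathcal L_s(\lambda)$ and the trivial inclusion $\mathsf{sh}(\mathcal L_s(\lambda))\subset\{\M_{\mathcal R_s}g > \lambda\}$, together with the assumed weak-type bound $\|\M_{\mathcal R_s}g\|_{p',\infty} \leq K_s \|g\|_{p'}$ where $K_s \coloneqq \|\M_{\mathcal R_s}:L^{p'}\to L^{p',\infty}\|$, we obtain
\[
\sum_{L\in\mathcal L_s(\lambda)} a_L \leq \min\!\left(\mass_{a,1}(\mathcal L_s),\ K_s^{p'}\|g\|_{p'}^{p'}\lambda^{-p'}\right).
\]
Splitting the $\lambda$-integral at the balancing point $\lambda_0 = K_s \|g\|_{p'} \mass_{a,1}(\mathcal L_s)^{-1/p'}$ yields
\[
\sum_{L\in\mathcal L_s} a_L \langle g\rangle_L \leq \int_0^{\lambda_0} \mass_{a,1}(\mathcal L_s)\,\d\lambda + \int_{\lambda_0}^\infty \frac{K_s^{p'}\|g\|_{p'}^{p'}}{\lambda^{p'}}\,\d\lambda \lesssim_p K_s \|g\|_{p'}\mass_{a,1}(\mathcal L_s)^{\frac1p}.
\]
Taking the supremum over $g$,
\[
\mass_{a,p}(\mathcal L_s) \lesssim_p K_s \,\mass_{a,1}(\mathcal L_s)^{\frac1p}.
\]

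\textbf{Step 3: summing over slopes.} Combining Steps 1 and 2,
\[
\mass_{a,p}(\mathcal L) \lesssim_p \Bigl(\sup_{s\in S} K_s\Bigr)\sum_{s\in S}\mass_{a,1}(\mathcal L_s)^{\frac1p}.
\]
Since $\#S=N$, H\"older's inequality on the counting measure on $S$ yields
\[
\sum_{s\in S}\mass_{a,1}(\mathcal L_s)^{\frac1p} \leq N^{\frac{1}{p'}}\Bigl(\sum_{s\in S}\mass_{a,1}(\mathcal L_s)\Bigr)^{\frac1p} = N^{\frac{1}{p'}}\mass_{a,1}(\mathcal L)^{\frac1p}.
\]
Dividing by $\mass_{a,1}(\mathcal L)^{1/p}$ and taking the supremum over $\mathcal L$ and $a$ produces the claimed bound.

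There is no genuine obstacle here; the only calibration is the choice of splitting point $\lambda_0$ in Step 2, which trades mass for $L^{p'}$-energy optimally and is what produces the total-mass exponent $1/p$ matching the left-hand side of the definition of $\mathsf U_p$. The Hölder step in Step 3 is what forces the polynomial loss $N^{1/p'}$, which Theorem~\ref{thm:carleson} later improves to a logarithmic dependence via a much finer iteration.
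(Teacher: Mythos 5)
Your proof is correct. The approach is genuinely different from the paper's, so a brief comparison is worthwhile.

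The paper dualizes globally against a single $g$ with $\|g\|_{p'}=1$, makes one cut at a fixed threshold in $\langle g\rangle_R$ (with the threshold level calibrated to $N$ and $\mathsf{mass}_{a,1}(\mathcal R)$), bounds the sub-threshold contribution directly, and then feeds the super-threshold collection back into the definition of $\mathsf U_p$ slope-by-slope, invoking the product John–Nirenberg inequality (via \cite{LR}*{Lemma~3.11}) for the single-slope $\mathsf U_p$ bound; the argument closes by showing the super-threshold mass is small. Your argument instead splits by slope via the triangle inequality at the very start, replaces the John–Nirenberg citation with a self-contained layer-cake/interpolation argument (trading the trivial Carleson bound $\sum_{L\in\mathcal L_s(\lambda)}a_L\leq |\mathsf{sh}(\mathcal L_s(\lambda))|$ against the weak-type bound for $\M_{\mathcal R_s}$ across the levels $\lambda$), and recombines over slopes with Hölder on counting measure. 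Both produce the same bound $N^{1/p'}\sup_s K_s$. Your single-slope step is an elementary reproof of exactly the estimate the paper attributes to John–Nirenberg, so the route is more self-contained; the cost is that the implicit constant in your Step~2 carries the factor $1+\frac{1}{p'-1}$, which degrades as $p\to\infty$ but is harmless in the range used downstream (the lemma is only applied with $p$ bounded, indeed $p=2$, in the bootstrap for Theorem~\ref{thm:carleson}). One small caveat worth spelling out if you write this up: you need $g\geq0$ to justify the layer-cake identity and the inclusion $\mathsf{sh}(\mathcal L_s(\lambda))\subset\{\M_{\mathcal R_s}g>\lambda\}$, but this is legitimate since $T_{\mathcal L_s}(a)\geq 0$, so the dual extremizer may be taken nonnegative, as you implicitly do.
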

%%%%%%%%%%%%%%%%%%%%%%%%%%%%%% LEMMA LEMMA LEMMA

Theorem \ref{thm:carleson} is then an easy consequence of the following bootstrap-type estimate. For an arbitrary finite collection of parallelograms $\mathcal R\subset \mathcal D_S ^2$  we will prove the estimate
\begin{equation}\label{eq:iter}
\mathsf U_2(\mathcal R) ^2 \lesssim (\log \mathsf U_2(\mathcal R) )^\upgamma \log N
\end{equation}
with absolute implicit constant. Note also that the boundedness assumption on $\M_{\mathcal R_s}$ for some $p<2$ and  Lemma~\ref{l:apriori}  yield the  \emph{a priori} estimate $\mathsf U_2 (\mathcal R) \lesssim N^{1/2}$. Inserting this  \emph{a priori} estimate into \eqref{eq:iter} and bootstrapping will then complete the proof of Theorem \ref{thm:carleson}. It thus suffices to prove \eqref{eq:iter} to obtain Theorem \ref{thm:carleson}.

The remainder of the section is dedicated to the proof of \eqref{eq:iter}. We begin by expanding the square of the $L^2$-norm of $T_\mathcal R(a)$ as follows:
\begin{equation}
\label{e:badnessnorm}
\mathsf{mass}_{a,2}(\mathcal R)^2 =
	\|T_{\mathcal R}(a)\|_2 ^2 \leq 2   \sum_{R\in\mathcal R} a_R \frac{1}{|R|}\int_{R} \sum_{\substack{{Q}\in\mathcal R\\{Q}\leq R}}a_{{Q}}\frac{\ind_{{Q}}}{|{Q}|}\eqqcolon 2 \sum_{R\in\mathcal R}a_R  B_R ^{\mathcal R}.
\end{equation}
For any $\mathcal L \subset  \mathcal R$ and $R\in\mathcal R$ we have implicitly defined
\begin{equation}
\label{eq:brL}
B_R ^{\mathcal L } \coloneqq  \frac{1}{|R|}\int_{R} \sum_{\substack{{Q}\in\mathcal L\\{Q}\leq R}} a_{{Q}}\frac{\ind_{{Q}}}{|Q|}.
\end{equation}

%%%%%%%%%%%%%%%%%%%%%%%%%%%%%% REMARK REMARK REMARK
\begin{remark}\label{rmrk:BRbig} Observe that for any $\mathcal L\subset  \mathcal R$ and every fixed $s\in S$ we have
	\[
	\bigcup \left\{R\in\mathcal R_s:\, B_R ^{\mathcal L}>\uplambda\right\}\subset  \left\{x\in \R^2:\M_{\mathcal R_s}\bigg[\sum_{Q\in\mathcal L}  a_Q\frac{\ind_Q}{|Q|}\bigg](x)>\uplambda\right\}
	\]
which by our assumption on the weak $(p,p)$ norm of $\M_{\mathcal R_s}$ implies
\[
\sup_{s\in S}\Big|\bigcup \left\{R\in\mathcal R_s:\, B_R ^{\mathcal L}>\uplambda\right\}\Big|\lesssim (p')^\upgamma \frac{ \mathsf{mass}_{a,p}(\mathcal L) ^p}{\lambda^p},\qquad p\to 1^+.
\]
\end{remark}
%%%%%%%%%%%%%%%%%%%%%%%%%%%%%% REMARK REMARK REMARK

For a numerical constant $\uplambda\geq 1$, to be chosen at the end of the proof, a nonnegative integer $k$ and $s\in S$ we consider subcollections of $\mathcal R_s$  as follows
\begin{equation}\label{eq:rsk}
\mathcal R_{s,k}\coloneq\left\{R:\, R\in\mathcal R_s,\: \uplambda k\leq  B_R ^{\mathcal R } < \uplambda(k+1)\right\},\qquad k\in\mathbb N,\quad s\in S .
\end{equation}
Using  \eqref{e:badnessnorm} we have
\begin{equation}\label{eq:main_split}
\begin{split}
\|T_{\mathcal R}(a)\|_2 ^2  & \lesssim \sum_{s\in S}\sum_{k=0} ^{  N} k\uplambda \sum_{R\in\mathcal R_{s,k}} a_R + N \sup_{s\in S} \Big[\sum_{k>N} k\uplambda \sum_{R\in\mathcal R_{s,k}} a_R\Big]
\\
&\qquad  \lesssim   \uplambda  (\log N) \mathsf{mass}_{a,1}(\mathcal R)+ \uplambda N \sum_{k> N} k \sup_{s\in S}   |\mathsf{sh}(\mathcal R_{s,k})|.
\end{split}
\end{equation}
Here $\uplambda>0$ is the constant   used to define the collections $\mathcal R_{s,k}$ and in the last lines we used the definition of a Carleson sequence and Remark~\ref{rmrk:dircarl}.

The following lemma encodes the exponential decay relation between    mass and $ B^{\mathcal L}_R$ and is in fact the main step of the proof  of Theorem~\ref{thm:carleson}.

%%%%%%%%%%%%%%%%%%%%%%%%%%%%%% LEMMA LEMMA LEMMA
\begin{lemma} \label{lem:iter} Let  $a=\{a_R:R\in \mathcal D_S ^2\}$ be an $L^\infty$-normalized Carleson sequence, $  S\subset [-1,1]$, and $\mathcal L,\mathcal R \subset  \mathcal D_S ^2$ with $\mathcal L \subseteq \mathcal R$. We assume that for some $p\in[1,2)$
	\[
A_p\coloneqq	\sup_{s\in S} \|\M_{\mathcal R_s}:L^p\to L^{p,\infty}\|<+\infty.
	\]
If  $\,\uplambda \geq C\max(1,A_p \mathsf{U}_2(\mathcal L) ^{\frac{2}{p'}})$ for a sufficiently large numerical constant $C>1$ then there exists $\mathcal L_1 \subset  \mathcal L $ such that:
\begin{itemize}
	\item [\emph{(i)}]  $\mathsf{mass}_{a,1}({\mathcal L_1})\leq  \frac{1}{2} \mathsf{mass}_{a,1}({\mathcal L}) $;
	 % and $\mathsf{sh}(\mathcal L_1  )\leq \frac{s_{\mathcal L}}{2}$.
	\item[\emph{(ii)}] fixing $s\in S$ and denoting by
	 $\mathcal R_{s} '$  the collection of rectangles $R$ in $\mathcal R_s$ with $B^{\mathcal L} _R>\uplambda$, cf.\ \eqref{eq:brL}, we have that 
\[
B^{\mathcal L} _R \leq \uplambda +B^{\mathcal L_1} _{R}\qquad \forall R\in\mathcal R_s '.
\]
\end{itemize}
\end{lemma}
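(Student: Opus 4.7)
The plan is to construct $\mathcal L_1$ by stopping along level sets of the directional maximal functions $\M_{\mathcal R_s}$ applied to the balayage $f \coloneqq T_{\mathcal L}(a)$, and to verify (i) from the $L^\infty$-normalized Carleson condition on $a$, while (ii) will follow from the containment geometry of the stopping cover. The hypothesis on $\uplambda$ is designed so that the reverse-Hölder bound implicit in the definition of $\mathsf U_2(\mathcal L)$ produces level sets of controllably small measure.

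First I would exploit the log-convexity \eqref{eq:convex} together with the definition of $\mathsf U_2(\mathcal L)$ to obtain the key reverse-type bound
\[
\|f\|_p^p = \mass_{a,p}(\mathcal L)^p \leq \mathsf U_2(\mathcal L)^{2p/p'} \mass_{a,1}(\mathcal L), \qquad 1\leq p\leq 2.
\]
For each $s\in S$ I would then set $E_s \coloneqq \{x\in\R^2 \st \M_{\mathcal R_s} f(x) > \uplambda/2\}$ and apply the weak $(p,p)$ hypothesis on $\M_{\mathcal R_s}$ to get
\[
|E_s| \lesssim \bigl( A_p\, \mathsf U_2(\mathcal L)^{2/p'}/\uplambda \bigr)^p \mass_{a,1}(\mathcal L),
\]
which becomes an arbitrarily small fraction of $\mass_{a,1}(\mathcal L)$ as soon as the constant $C$ in the hypothesis on $\uplambda$ is chosen sufficiently large.

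Next I would cover each $E_s$ by a collection $\mathcal T_s\subset \mathcal P_s^2$ of maximal slope-$s$ parallelograms with $|\mathsf{sh}(\mathcal T_s)| \lesssim |E_s|$, and define $\mathcal L_1 \coloneqq \{Q\in\mathcal L \st Q\subseteq T \text{ for some } T\in\bigcup_{s\in S}\mathcal T_s\}$. For (i), I would partition $\mathcal L_1$ according to the slope of the covering element: each slope-piece $\mathcal L_1^s$ is subordinate to the fixed-slope family $\mathcal T_s\subset \mathcal P_s^2$, so the Carleson hypothesis on $a$ gives $\sum_{Q\in\mathcal L_1^s}a_Q \leq |\mathsf{sh}(\mathcal T_s)| \lesssim |E_s|$. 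Combining these per-slope bounds with the smallness of each $|E_s|$ should yield $\mass_{a,1}(\mathcal L_1) \leq \tfrac12\mass_{a,1}(\mathcal L)$. For (ii), any $R\in\mathcal R_s'$ satisfies $\langle f\rangle_R \geq B_R^{\mathcal L} > \uplambda$, so $R\subset E_s$ and $R$ lies inside some $T\in\mathcal T_s$; a Calderón–Zygmund-style argument relative to the threshold $\uplambda$ on $T$ then delivers $B_R^{\mathcal L\setminus \mathcal L_1}\leq \uplambda$, which is equivalent to the stated inequality $B_R^{\mathcal L}\leq \uplambda + B_R^{\mathcal L_1}$.

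The main obstacle, and the technical heart of the lemma, will be the accounting across slopes in (i): summing the per-slope bounds naively would accumulate a factor $N=\#S$, which the hypothesis on $\uplambda$ forbids. Removing this spurious factor is precisely where the proof must exploit the geometry of sheared dyadic grids in the spirit of the single-scale covering lemma of Bateman and Katz, using the uniform bound $|s|\leq 1$ and the careful choice of the enlargement used to absorb $Q\leq R \subset T$ into $\mathcal L_1$. The delicate point is to design the stopping so that each $Q\in\mathcal L_1$ is charged to a single slope and the overlap of the $\mathcal T_s$'s is controlled, so that the $N$-factor is absorbed into the absolute numerical constant $C$.
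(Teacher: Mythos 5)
Your high‐level strategy — stop on level sets of the directional maximal functions and feed the stopping regions into the Carleson hypothesis to control the mass — is in the right spirit, and your estimate for $|E_s|$ via log‐convexity and the weak $(p,p)$ assumption is correct. However, you have misdiagnosed the main difficulty, and the step you actually elide is exactly where the technical content of the lemma lives.

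First, the phantom obstacle. You worry that summing per-slope bounds over $s\in S$ accumulates a factor $N$. But look at how the lemma is used in Lemma~\ref{lem:shadow}: the slope $s$ is fixed \emph{before} the iteration begins, and the collections $\mathcal L_j$ are built for that one $s$. Correspondingly, the paper's proof reduces immediately to $s=0$ by shearing invariance, and the $\mathcal L_1$ it builds is tailored to that single slope. There is no need for a single $\mathcal L_1$ to serve every slope simultaneously, so the $N$-factor never arises; your last paragraph is spent guarding against a difficulty that is not present.

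Second, and more seriously, the assertion that ``a CZ‐style argument relative to the threshold $\uplambda$ on $T$ then delivers $B_R^{\mathcal L\setminus\mathcal L_1}\leq\uplambda$'' is not a proof, and cannot be made one without essentially re-deriving the paper's argument. Note that the sum defining $B_R^{\mathcal L}$ ranges over $Q\leq R$, where $\leq$ is the partial order \eqref{eq:partorder}: $Q\cap R\neq\varnothing$ and $\uppi_1(Q)\subseteq\uppi_1(R)$, but with \emph{no} constraint on $\uppi_2(Q)$. Since the $Q$'s have slopes ranging over all of $S$, a parallelogram $Q$ can intersect $R\subset T$ while sticking far out of $T$ vertically; such $Q$ is \emph{not} captured by your containment-based $\mathcal L_1=\{Q:Q\subseteq T\text{ for some }T\in\mathcal T_s\}$ and yet contributes to $B_R^{\mathcal L\setminus\mathcal L_1}$. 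These $Q$ do not nest dyadically inside $T$, so the usual Calderón–Zygmund ``look at the parent'' step has no analogue. The paper handles precisely this by splitting $\mathcal L$ into ``in'' and ``out'' pieces relative to a vertical $3K$ enlargement, proving the slice estimate \eqref{eq:slice} to show that the ``out'' contribution on a horizontal slice of $R$ is comparable to its average over $I_R\times 3K$, and then running a one-dimensional maximality argument in the vertical parameter (the $K_R$ stopping) combined with one more application of the Carleson property (to control $Q$'s subordinate to $I_R\times 9K_R$) to conclude that the ``out'' mass contributes at most $\lesssim\uplambda$ to $B_R$. Your proposal contains none of this machinery, and item (ii) is where the entire weight of the lemma rests; without it the proof is incomplete.
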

%%%%%%%%%%%%%%%%%%%%%%%%%%%%%% LEMMA LEMMA LEMMA

The final lemma we make use of in the argument translates the exponential decay of the mass of each $\mathcal R_{s,k}$  into exponential decay of the support size, which is what we need in the estimate \eqref{eq:main_split}.

%%%%%%%%%%%%%%%%%%%%%%%%%%%%%% LEMMA LEMMA LEMMA
\begin{lemma}\label{lem:shadow} Let $S\subset[-1,1]$ and define the collections $\mathcal R_{s,k}$ by \eqref{eq:rsk} with $\uplambda$ defined as in Lemma~\ref{lem:iter} for $\mathcal L= \mathcal R$
\[
\uplambda\coloneqq	 C \max\Big(1, A_p \mathsf{U} _2(\mathcal R) ^\frac{2}{p'}\Big).
\]	
We assume that the operators $\{\M_{\mathcal R_s}:\, s\in S\}$ map $L^p(\R^2)$ to $L^{p,\infty}(\R^2)$ uniformly with constant $A_{p}$. For $k\geq 1$ we then have the estimate
	\[
	|\mathsf{sh}(\mathcal R_{s,k})|\lesssim 2^{-k} \mathsf{mass}_{a,1}(\mathcal R) 
	\]
with absolute implicit constant.
\end{lemma}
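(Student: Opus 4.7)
The strategy is to iterate Lemma \ref{lem:iter} and combine the resulting mass decay with the weak-type control of Remark \ref{rmrk:BRbig}.

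\textbf{Step 1: Nested collections from iteration.} Set $\mathcal L_0\coloneqq \mathcal R$ and apply Lemma \ref{lem:iter} inductively: at stage $j$ we use $\mathcal L=\mathcal L_j$, which produces $\mathcal L_{j+1}\subset \mathcal L_j$. The choice of threshold
\[
\uplambda = C\max\bigl(1,\, A_p\mathsf{U}_2(\mathcal R)^{2/p'}\bigr)
\]
is admissible at every step, because $\mathcal L_j\subset \mathcal R$ implies $\mathsf{U}_2(\mathcal L_j)\leq \mathsf{U}_2(\mathcal R)$ (the defining supremum is taken over a smaller family). By conclusion (i) of Lemma~\ref{lem:iter}, we obtain the geometric mass decay
\[
\mathsf{mass}_{a,1}(\mathcal L_j)\leq 2^{-j}\mathsf{mass}_{a,1}(\mathcal R),\qquad j\geq 0.
\]

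\textbf{Step 2: Tracking badness through the iteration.} Fix $s\in S$ and let $R\in \mathcal R_{s,k}$, so in particular $B^{\mathcal R}_R\geq k\uplambda$. By conclusion (ii) of Lemma~\ref{lem:iter}, whenever $B^{\mathcal L_j}_R>\uplambda$ one has $B^{\mathcal L_{j+1}}_R\geq B^{\mathcal L_j}_R-\uplambda$. A straightforward induction then gives $B^{\mathcal L_j}_R\geq (k-j)\uplambda$ for all $j=0,1,\ldots,k-1$, so in particular
\[
R\in \bigl\{R\in\mathcal R_s\st B^{\mathcal L_{k-1}}_R\geq \uplambda\bigr\}.
\]

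\textbf{Step 3: Weak-type estimate and $L^2$-$L^1$ interpolation of mass.} Applying Remark \ref{rmrk:BRbig} to the collection $\mathcal L_{k-1}$ at level $\uplambda/2$ yields
\[
\bigl|\mathsf{sh}(\mathcal R_{s,k})\bigr|\lesssim \frac{A_p^p\,\mathsf{mass}_{a,p}(\mathcal L_{k-1})^p}{\uplambda^p}.
\]
Now combine the log-convexity inequality \eqref{eq:convex} with the definition of $\mathsf{U}_2$ and $\mathsf{U}_2(\mathcal L_{k-1})\leq \mathsf{U}_2(\mathcal R)$:
\[
\mathsf{mass}_{a,p}(\mathcal L_{k-1})^p\leq \mathsf{mass}_{a,1}(\mathcal L_{k-1})^{2-p}\mathsf{mass}_{a,2}(\mathcal L_{k-1})^{2(p-1)}\leq \mathsf{U}_2(\mathcal R)^{2(p-1)}\mathsf{mass}_{a,1}(\mathcal L_{k-1}).
\]
Inserting the decay from Step 1 gives
\[
\mathsf{mass}_{a,p}(\mathcal L_{k-1})^p\leq \mathsf{U}_2(\mathcal R)^{2(p-1)}\cdot 2^{-(k-1)}\mathsf{mass}_{a,1}(\mathcal R).
\]

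\textbf{Step 4: Cancellation and conclusion.} The choice of $\uplambda$ is tailored precisely so that $\uplambda^p$ absorbs the $A_p^p\mathsf{U}_2(\mathcal R)^{2(p-1)}=A_p^p\mathsf{U}_2(\mathcal R)^{2p/p'}$ factor: combining Steps 2 and 3,
\[
\bigl|\mathsf{sh}(\mathcal R_{s,k})\bigr|\lesssim \frac{A_p^p\,\mathsf{U}_2(\mathcal R)^{2(p-1)}}{C^p A_p^p\,\mathsf{U}_2(\mathcal R)^{2(p-1)}}\cdot 2^{-(k-1)}\mathsf{mass}_{a,1}(\mathcal R)\lesssim 2^{-k}\mathsf{mass}_{a,1}(\mathcal R),
\]
with absolute implicit constant (the factor $C^{-p}$ is absorbed).

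The computation itself is routine once the scheme is in place; the only genuinely delicate point is verifying that the admissibility condition in Lemma~\ref{lem:iter} holds uniformly along the iteration, which is ensured by the monotonicity $\mathsf{U}_2(\mathcal L_j)\leq \mathsf{U}_2(\mathcal R)$. Everything else is a matter of correctly balancing exponents so that the cancellation in Step 4 yields an absolute constant independent of $A_p$, $\mathsf{U}_2(\mathcal R)$, and $\uplambda$.
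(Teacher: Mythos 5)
Your proof is correct and follows essentially the same route as the paper's: iterate Lemma~\ref{lem:iter} with the fixed threshold $\uplambda$ (admissible at every stage by the monotonicity $\mathsf{U}_2(\mathcal L_j)\leq\mathsf{U}_2(\mathcal R)$, which you rightly flag), track the decay of $B^{\mathcal L_j}_R$ to land in $\mathcal L_{k-1}$, then combine Remark~\ref{rmrk:BRbig} with the log-convexity bound \eqref{eq:convex} and the choice of $\uplambda$. The only differences are cosmetic — you spell out the boundary case $B^{\mathcal L_{k-1}}_R\geq\uplambda$ versus $>\uplambda$ a bit more carefully than the paper does — but the argument is identical in substance.
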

%%%%%%%%%%%%%%%%%%%%%%%%%%%%%% LEMMA LEMMA LEMMA

With these lemmata in hand we now return to the proof of \eqref{eq:iter}. Substituting the estimate of Lemma~\ref{lem:shadow} into \eqref{eq:main_split} yields
\[
\|T_{\mathcal R}(a)\|_{2} ^2 \lesssim   \uplambda \mathsf{mass}_{a,1}(\mathcal R) \bigg[(\log N) + N\sum_{k\geq \log N}k2^{-k}\bigg]  \lesssim   \uplambda \mathsf{mass}_{a,1}(\mathcal R)(\log N).
\]
This was proved for an arbitrary collection $\mathcal R$ and so also for every $\mathcal L\subset  \mathcal R$. Thus the estimate above and our assumption $A_p\lesssim (p')^\upgamma$ imply
\[
\mathsf{U}_2(\mathcal R) ^2 \lesssim \uplambda  (\log N),\qquad \uplambda \gtrsim \max(1,(p')^\upgamma \mathsf{U}_2(\mathcal R) ^{\frac{2}{p'}}).
\]
Now observe that we can assume that $ \mathsf{U}_2(\mathcal R) \gtrsim 1$ otherwise there is nothing to prove. In this case we can take 
\[
\uplambda \simeq (p')^\upgamma \mathsf{U}_2(\mathcal R) ^\frac{2}{p'}
\]
for every $p>1$.  The choice  $p'\coloneqq (\log \mathsf{U}_2(\mathcal R))$ guarantees that $[\mathsf{U}_2(\mathcal R)]^{\frac{1}{p'}}\lesssim 1$   and leads to
\[
\mathsf{U}_2(\mathcal R) ^2 \lesssim (\log \mathsf{U}_2(\mathcal R))^\upgamma \log N.
\]
This is the desired estimate \eqref{eq:iter} and so the proof of Theorem~\ref{thm:carleson} is complete.

% %%%%%%%%%%%%%%%%%%%%%%%%%%%%%% SECTION SECTION SECTION
\subsection{Proof of the lemmata} \label{ss2:lp}

%%%%%%%%%%%%%%%%%%%%%%%%%%%%%% PROOF PROOF PROOF
\begin{proof}[Proof of Lemma \ref{l:apriori}] We follow the proof of \cite{LR}*{Lemma 3.11}. Take $\RR$ to be some finite collection  and $\|g\|_{p'}=1$ such that 
\[
\Big\|\sum_{R\in\RR}a_R \frac{\ind_R}{|R|}\Big\|_p=\int \sum_{R\in\RR}a_R \frac{\ind_R}{|R|}g.
\]
Define $\RR':=\{R\in\RR: \, \l g\r_R > [cN/ \mass_{a,1}   (\RR)]^{1/{p'}}\}$ for some $c>1$  and $\RR'_s\coloneqq \RR'\cap \mathcal D_s ^2$ for $s\in S$. Then, 
\[
\begin{split}
\int \sum_{R\in\RR} a_R \frac{\ind_R}{|R|} g  &\le \sum_{R\in\RR\setminus\RR'}a_R  \l g\r_R+ \Big\|\sum_{R\in\RR'}a_R \frac{\ind_R}{|R|}\Big\|_p
\\
& \le  (cN)^{\frac{1}{p'}}(\sum_{R\in\RR } a_R)^{\frac1p}+ N \sup_{s\in S}\Big\|\sum_{R\in\RR'_s}a_R\frac{\ind_R}{|R|}\Big\|_p.
% \\
% &\qquad\qquad\le N \mass_a(\RR)^{1/2}+N \sup_{v\in V}\|\sum_{R\in \RR'_v}a_R\frac{\ind_R}{|R|}\|_{L^2(\sigma)}.
\end{split}\] 
This means
\[
\Big\|\sum_{R\in\RR}a_R \frac{\ind_R}{|R|}\Big\|_p\lesssim (c N)^{\frac{1}{p'}} \Big(1+\frac{N^{\frac1p}}{c^{\frac{1}{p'}}} \sup_{s\in S} \frac{\Big\|\sum_{R\in\RR'_s}a_R\frac{\ind_R}{|R|}\Big\|_p}{(\sum_{R\in\RR' _s} a_R)^{\frac1p}}	 \frac{(\sum_{R\in\RR' _s} a_R)^{\frac1p}}{(\sum_{R\in\RR _s} a_R)^{\frac1p}}\Big)(\sum_{R\in\RR } a_R)^{\frac1p}.
\]
We have proved that for an arbitrary collection $\mathcal R$ we have
\[
\mathsf{U}_p(\mathcal R) \leq (cN)^{\frac{1}{p'}}\bigg(1+\frac{N^{\frac1p}}{c^{\frac{1}{p'} }} \sup_s\mathsf{U}_p(\mathcal R_s ') \frac{\mass_{a,1}(\mathcal R_s ')^\frac{1}{p}}{\mass_{a,1}(\mathcal R)^{\frac1p}}\bigg).
\]

We claim that $\sup_{s\in S}\mathsf{U}_p(\RR_s ')\lesssim \sup_{s\in S}\|\M_{\RR_s}:L^{p'}\to L^{p',\infty}\|$. Assuming  this for a moment and using Remark~\ref{rmrk:dircarl} we can estimate
\[
\begin{split}
\sum_{R\in\RR' _s} a_R &\le |\mathsf{sh}(\RR'_s)|\le \big|\big\{\M_{\RR_s}(g)>(cN/\mass_{a,1}(\RR))^{1/{p'}}\big\}\big|
 \\
&\leq   \sup_{s\in S}\big\|\M_{\mathcal R_s}:L^{p'}\to L^{p',\infty} \big\|^{p'}
\frac{\mass_{a,1}(\RR)}{c N}.
\end{split}
\]
This proves the proposition upon choosing $c \gtrsim  \sup_{s\in S}\big\|\M_{\mathcal R_s}:L^{p'}\to L^{p',\infty} \big\|^{p'}$.

We have to prove the claim. Note that since $\RR' _s$ is a collection in a fixed direction the inequality $\mathsf{U}_{\RR'_s} \lesssim \sup_{s\in S}\|\M_{\RR_s}:L^{p'}\to L^{p',\infty}\|$ follows by the John-Nirenberg inequality in the product setting and Remark~\ref{rmrk:dircarl}; see \cite{LR}*{Lemma 3.11}. 
\end{proof}
%%%%%%%%%%%%%%%%%%%%%%%%%%%%%% PROOF PROOF PROOF

%%%%%%%%%%%%%%%%%%%%%%%%%%%%%% PROOF PROOF PROOF
\begin{proof}[Proof of Lemma \ref{lem:iter}]   By the invariance under shearing of our statement, we can work in the case $s=0$. Therefore,    $\mathcal R_0 '$ will stand for the collection of rectangles in $\mathcal R_0$ such that $B_R ^\mathcal L>\uplambda$, where $\uplambda\geq C$  and  $C>1$  will be  specified at the end of  the proof. We write $R=I_R \times L_R$ for $R\in \mathcal R_0$.

% %%%%%%%%%%%%%%%%%%%%%%%%%%%%%% SECTION SECTION SECTION
\subsubsection*{Inside-outside splitting}For $I\in \{\uppi_1(R): R \in \mathcal R_0'\}$  and any interval $K$ we define
	\[
	\begin{split}
		\mathcal L_{I,K} ^{\mathrm{in}} \coloneqq\{Q\in\mathcal L:Q\leq I\times K, \,\uppi_2(Q)\subset  3K\},\qquad \mathcal L_{I,K} ^{\mathrm{out}} \coloneqq\{Q\in\mathcal  L:Q\leq I\times K, \,\uppi_2(Q)\nsubseteq 3K\},
	\end{split}
	\]
where we recall that the definition of partial order $Q\leq R$ was given in \eqref{eq:partorder}. Set also
\[
B_{I,K} ^{\mathrm{in}}\coloneqq \avgint_{I\times K}  \sum_{Q\in \mathcal L_{I,K} ^{\mathrm{in}} }\frac{a_Q}{|Q|}\ind_{Q},
\quad
B_{I,K} ^{\mathrm{out}}\coloneqq \avgint_{I\times K}  \sum_{Q\in \mathcal L_{I,K} ^{\mathrm{out}} }\frac{a_Q}{|Q|}\ind_{Q}.
\]

We claim that if $K\subset \R$ is any interval then for all $\upalpha\in K$ we have
\begin{equation}
\label{eq:slice}
\begin{split}
	&\avgint_{I\times\{\upalpha\}} \sum_{Q\in\mathcal L^{\mathrm{out}} _{I,K} } a_Q \frac{\ind_Q}{|Q|}  =  \sum_{Q\in\mathcal L^{\mathrm{out}} _{I,K} } a_Q\frac{|Q\cap (I \times \{\upalpha\})|}{|Q|} 
	 \lesssim \avgint_{I\times 3K} \sum_{Q\in\mathcal L^{\mathrm{out}} _{I,K} } a_Q \frac{\ind_Q}{|Q|}.
\end{split}
\end{equation}
To see this note that in order for a $Q$-term appearing in the sum of the left hand side above  to be non-zero we must have
\[
\uppi_1(Q)\subset  I, \qquad \uppi_2(Q) \cap K \neq \varnothing, \qquad \uppi_2(Q) \cap \R \setminus 3 K  \neq \varnothing.
\]
Let us  write $\uptheta_Q=\arctan \sigma$ if $Q\in\mathcal D^2 _{\sigma}$ for some $\sigma \in S$. A computation then reveals that
\[
|Q\cap (I\times\{\upalpha\})|=\min(|J_Q|,\dist(\upalpha,\R\setminus \uppi_2(Q) ) \cot \uptheta_Q 
\] 
We also observe that $\uppi_2(Q) \cap (3K \setminus K)$ contains an interval $A=A(\upalpha)$ of length $|K|/3$, whence for all $\upalpha'\in A$ we have
\[
\dist(\upalpha,\R\setminus \uppi_2(Q) )\leq \dist(\upalpha,\upalpha') +\dist(\upalpha',\R\setminus \uppi_2(Q) ) \lesssim |K|+\dist(\upalpha',\R\setminus \uppi_2(Q) )\lesssim \dist(\upalpha',\R\setminus \uppi_2(Q) );
\]
see Figure~\ref{fig:slice}. This clearly implies that for every $\upalpha\in K$ we have
\[
|Q\cap (I\times\{\upalpha\})| \lesssim \avgint_{A} |Q\cap (I\times\{\upalpha'\})|\,\d \upalpha'\lesssim \avgint_{3K} |Q\cap (I\times\{\upalpha'\})|\,\d \upalpha'
\]
which proves the claim.

%%%%%%%%%%%%%%%%%%%%%%%%%%%%%% SECTION SECTION SECTION
\subsubsection*{Smallness of the local average.} We now use the previously obtained \eqref{eq:slice} to prove (ii).   Let $\mathcal R_0 ^\star$ denote the family of parallelograms $R=I_R\times L_R\in\mathcal R_0 '$ such that $B_{I_R,L_R} ^{\mathrm{out}}>{\uplambda}$. For each such $R$ let $K_R$ be the maximal interval $K\in\{L_R,3L_R,\ldots,3^kL_R,\ldots\}$ such that $B_{I_R,K} ^{\mathrm {out}}>{\uplambda}$; the existence of the maximal interval $K_R$ is guaranteed for example by the  \emph{a priori} estimate of Lemma~\ref{l:apriori} and the assumption $R\in\mathcal R_0 ^\star$. Obviously $K_R\supseteq  L_R$ and $B_{I_R,3K_R} ^{\mathrm{out}}\leq \uplambda$.

We show that for $R\in\mathcal R_0 ^\star$ we have
\begin{equation}\label{eq:Ravg}
\avgint_R \sum_{Q\in\mathcal L^{\mathrm{out}} _{I_R,K_R} } a_Q \frac{\ind_Q}{|Q|} \leq \upkappa \uplambda
\end{equation}
for some numerical constant $\upkappa\geq 1$. 
%%%%%%%%%%%%%%%%%%%%%%%%%%%%%%% FIGURE FIGURE FIGURE
\begin{figure}[t]
\centering
\def\svgwidth{330pt}
%% Creator: Inkscape 1.0beta2 (2b71d25, 2019-12-03), www.inkscape.org
%% PDF/EPS/PS + LaTeX output extension by Johan Engelen, 2010
%% Accompanies image file '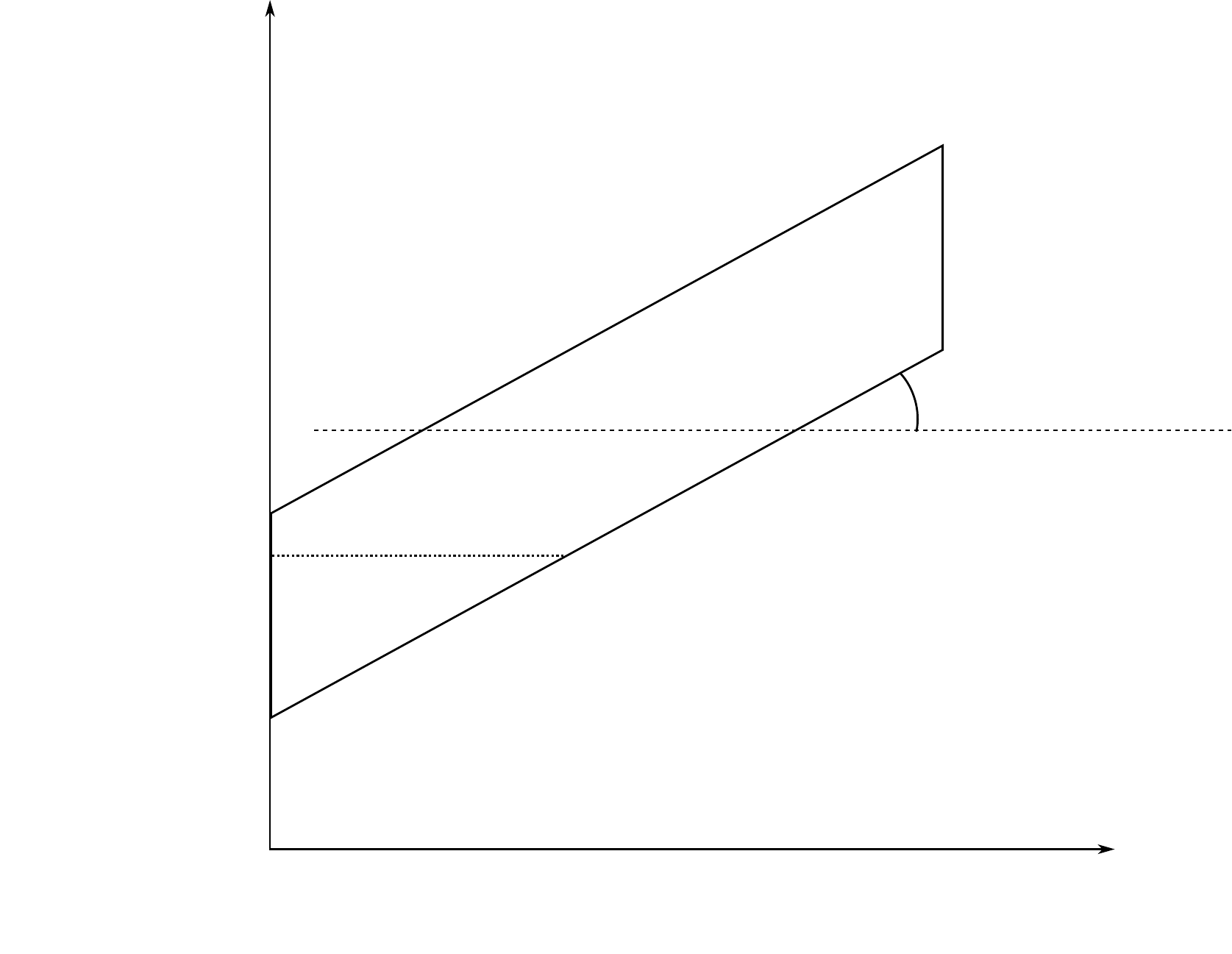' (pdf, eps, ps)
%%
%% To include the image in your LaTeX document, write
%%   \input{<filename>.pdf_tex}
%%  instead of
%%   \includegraphics{<filename>.pdf}
%% To scale the image, write
%%   \def\svgwidth{<desired width>}
%%   \input{<filename>.pdf_tex}
%%  instead of
%%   \includegraphics[width=<desired width>]{<filename>.pdf}
%%
%% Images with a different path to the parent latex file can
%% be accessed with the `import' package (which may need to be
%% installed) using
%%   \usepackage{import}
%% in the preamble, and then including the image with
%%   \import{<path to file>}{<filename>.pdf_tex}
%% Alternatively, one can specify
%%   \graphicspath{{<path to file>/}}
%% 
%% For more information, please see info/svg-inkscape on CTAN:
%%   http://tug.ctan.org/tex-archive/info/svg-inkscape
%%
\begingroup%
  \makeatletter%
  \providecommand\color[2][]{%
    \errmessage{(Inkscape) Color is used for the text in Inkscape, but the package 'color.sty' is not loaded}%
    \renewcommand\color[2][]{}%
  }%
  \providecommand\transparent[1]{%
    \errmessage{(Inkscape) Transparency is used (non-zero) for the text in Inkscape, but the package 'transparent.sty' is not loaded}%
    \renewcommand\transparent[1]{}%
  }%
  \providecommand\rotatebox[2]{#2}%
  \newcommand*\fsize{\dimexpr\f@size pt\relax}%
  \newcommand*\lineheight[1]{\fontsize{\fsize}{#1\fsize}\selectfont}%
  \ifx\svgwidth\undefined%
    \setlength{\unitlength}{482.14248709bp}%
    \ifx\svgscale\undefined%
      \relax%
    \else%
      \setlength{\unitlength}{\unitlength * \real{\svgscale}}%
    \fi%
  \else%
    \setlength{\unitlength}{\svgwidth}%
  \fi%
  \global\let\svgwidth\undefined%
  \global\let\svgscale\undefined%
  \makeatother%
  \begin{picture}(1,0.77448698)%
    \lineheight{1}%
    \setlength\tabcolsep{0pt}%
    \put(0,0){\includegraphics[width=\unitlength,page=1]{rectangles.pdf}}%
    \put(0.17369807,0.31981504){\color[rgb]{0,0,0}\makebox(0,0)[lt]{\lineheight{0}\smash{\begin{tabular}[t]{l}$\upalpha'$\end{tabular}}}}%
    \put(0.75277102,0.44643419){\color[rgb]{0,0,0}\makebox(0,0)[lt]{\lineheight{0}\smash{\begin{tabular}[t]{l}$\uptheta_Q$\end{tabular}}}}%
    \put(0,0){\includegraphics[width=\unitlength,page=2]{rectangles.pdf}}%
    \put(0.61857882,0.73095941){\color[rgb]{0,0,0}\makebox(0,0)[lt]{\lineheight{0}\smash{\begin{tabular}[t]{l}$Q$\end{tabular}}}}%
    \put(0.46980148,0.09069451){\color[rgb]{0,0,0}\makebox(0,0)[lt]{\lineheight{0}\smash{\begin{tabular}[t]{l}$I$\end{tabular}}}}%
    \put(0,0){\includegraphics[width=\unitlength,page=3]{rectangles.pdf}}%
    \put(0.56117215,0.34727745){\color[rgb]{0,0,0}\makebox(0,0)[lt]{\lineheight{0}\smash{\begin{tabular}[t]{l}$R=I\times L$\end{tabular}}}}%
    \put(0,0){\includegraphics[width=\unitlength,page=4]{rectangles.pdf}}%
    \put(0.16946619,0.17990661){\color[rgb]{0,0,0}\makebox(0,0)[lt]{\lineheight{0}\smash{\begin{tabular}[t]{l}$K$\end{tabular}}}}%
    \put(0.2456552,0.34017265){\color[rgb]{0,0,0}\makebox(0,0)[lt]{\lineheight{0}\smash{\begin{tabular}[t]{l}$Q\cap (I\times\{\upalpha'\})$\end{tabular}}}}%
    \put(0,0){\includegraphics[width=\unitlength,page=5]{rectangles.pdf}}%
    \put(-0.00188368,0.19864912){\color[rgb]{0,0,0}\makebox(0,0)[lt]{\lineheight{0}\smash{\begin{tabular}[t]{l}$3K$\end{tabular}}}}%
    \put(0,0){\includegraphics[width=\unitlength,page=6]{rectangles.pdf}}%
    \put(0.22698852,0.2601726){\color[rgb]{0,0,0}\makebox(0,0)[lt]{\lineheight{0}\smash{\begin{tabular}[t]{l}$Q\cap (I\times\{\upalpha\})$\end{tabular}}}}%
    \put(0.17680919,0.24203717){\color[rgb]{0,0,0}\makebox(0,0)[lt]{\lineheight{0}\smash{\begin{tabular}[t]{l}$\upalpha$\end{tabular}}}}%
  \end{picture}%
\endgroup%

\caption{A rectangle $Q$ with angle $\uptheta_Q$ intersecting $R=I\times L\subset  I\times K$.}
\label{fig:slice}
\end{figure}
%%%%%%%%%%%%%%%%%%%%%%%%%%%%%%% FIGURE FIGURE FIGURE
Indeed it is a consequence of \eqref{eq:slice} that
\[
\begin{split}
	 &\qquad \avgint_{I_R\times\{\upalpha\}} \sum_{Q\in\mathcal L^{\mathrm{out}} _{I_R,K_R} } a_Q \frac{\ind_Q}{|Q|} \lesssim \avgint_{I_R\times 3K_R} \sum_{Q\in\mathcal L^{\mathrm{out}} _{I_R,K_R} } a_Q \frac{\ind_Q}{|Q|}
	\\
	& \leq \avgint_{I_R\times 3K_R} \sum_{Q\in\mathcal L^{\mathrm{out}} _{I_R,3K_R} } a_Q\frac{\ind_Q}{|Q|} +\avgint_{I_R\times 3K_R} \sum_{Q\in\mathcal L^{\mathrm{out}} _{I_R, K_R}\setminus \mathcal L^{\mathrm{out}} _{I_R,3K_R}} a_Q \frac{\ind_Q}{|Q|}. 
	\end{split}
\]
The  first summand is estimated using the maximality of $K_R$
\[
\avgint_{I_R\times 3K_R} \sum_{Q\in\mathcal L^{\mathrm{out}} _{I_R,3K_R} } a_Q\frac{\ind_Q}{|Q|} =B_{I_R,3K_R} ^{\mathrm{out}}\leq \uplambda.
\]
The second summand can be further analyzed by observing that the cubes $Q$ appearing in the sum above satisfy $\uppi_1(Q)\subset  I$ and $\uppi_2(Q)\subset  9K_R$ since $Q\notin \mathcal L_{I_R,3K_R} ^{\mathrm{out}}$, that is, $\mathcal L^{\mathrm{out}} _{I_R,3K_R}\setminus \mathcal L^{\mathrm{out}} _{I_R,K_R}$ is subordinate to the singleton collection $\{I_R\times 9K_R\}$. Applying the Carleson sequence property
\[
\begin{split}
&\quad \avgint_{I_R\times 3K_R} \sum_{Q\in\mathcal L^{\mathrm{out}} _{I_R,K_R}\setminus \mathcal L^{\mathrm{out}} _{I_R,3K_R}} a_Q \frac{\ind_Q}{|Q|}\leq   \sum_{ Q\in\mathcal L_{I_R,K_R} ^{\mathrm{out}}\setminus \mathcal L_{I_R,3K_R} ^\mathrm{out} }  a_Q \frac{|Q\cap (I_R\times 3K_R)|}{|Q||I_R\times 3K_R|}\lesssim 1 \leq \uplambda
\end{split}
\]
by our assumption on $\uplambda$. Combining the estimates above shows that
\[
  \avgint_{I_R\times\{\upalpha\}} \sum_{Q\in\mathcal L^{\mathrm{out}} _{I_R,K_R} } a_Q \frac{\ind_Q}{|Q|} \lesssim \uplambda
\]
for all $\upalpha\in K_R$. Since $\pi_2(R)\subset  K$ this implies \eqref{eq:Ravg}.

Observe that if $R=I_R\times L_R\in\mathcal R_0 ' \setminus \mathcal R_0 ^\star$ then
\begin{equation}\label{eq:except}
B_{I_R,L_R} ^{\mathrm{out}}= \avgint_{I_R\times L_R} \sum_{Q\in\mathcal L^{\mathrm{out}} _{I_R,L_R} } a_Q \frac{\ind_Q}{|Q|} \leq\uplambda.
\end{equation}

%%%%%%%%%%%%%%%%%%%%%%%%%%%%%% SECTION SECTION SECTION
\subsubsection*{Defining the subcollection $\mathcal L_1$} We set
\[
\mathcal L_1 {'} \coloneqq \bigcup_{R\in \mathcal R_0 ^\star} \mathcal L_{I_R,K_R} ^{\mathrm{in}},\quad \mathcal L_1 {''}\coloneqq \bigcup_{R\in\mathcal R_0 {'} \setminus \mathcal R_0 ^\star}  \mathcal L_{I_R,L_R} ^{\mathrm{in}},\qquad \mathcal L_1 \coloneqq \mathcal L_1 {'} \cup \mathcal L_1 {''}.
\]
Now note that for each $R\in \mathcal R_0 ^\star$  and $K=K_R\in \mathcal K_{\uppi_1(R)}$ we have that
\[
B_R ^{\mathcal L} \leq \avgint_R \sum_{Q\in\mathcal L_{I_R\times K_R} ^{\mathrm{out}}}a_Q\frac{\ind_Q}{|Q|}+\avgint_R \sum_{Q\in\mathcal L_{I_R\times K_R} ^{\mathrm{in}}}a_Q\frac{\ind_Q}{|Q|}\leq \upkappa \uplambda+B_R ^{\mathcal L_1}
\]
while for $R\in \mathcal R_0 ' \setminus \mathcal R_0 ^\star$ the same estimate holds using $L_R$ in place of $K_R$. It remains to show the desired estimate for $\mass_{a,1}(\mathcal L_1)$ in (i) of the lemma.

%%%%%%%%%%%%%%%%%%%%%%%%%%%%%% SECTION SECTION SECTION
\subsubsection*{Smallness of  the mass $\,\mathsf{mass}_{a,1}(\mathcal L_1)$.} By the definition of the collections $\mathcal L_{I,K} ^{\mathrm{in}}$  we have that
\[
\mathsf{sh}(\mathcal L_1)\subset  \bigcup_{R\in\mathcal R_0 ^\star} I_R\times 3K_R \cup \bigcup_{R\in\mathcal R_0 '\setminus \mathcal R_0 ^\star} I_R\times 3L_R.
\]
If $K=K_R$ for some $R\in\mathcal R_0 ^\star$ we have by definition that $B_{I_R,K_R} ^{\mathrm{out}}>\uplambda$. On the other hand  for $R \in  \mathcal R_0 '\setminus \mathcal R_0 ^\star$ we have that $B_R ^{\mathcal L}=B_{I_R,L_R} ^{\mathcal L}>\uplambda$.

Define 
\[
E\coloneqq \bigg\{(x,y)\in \R^2:\, \M_v \bigg[  \sum_{Q\in\mathcal L} a_Q \frac{\ind_Q}{|Q|}\bigg](x,y)\geq \frac12 \uplambda\bigg\}
\]
where $\M_v=\M_{(1,s)}=\M_1$ is the directional Hardy-Littlewood maximal function acting on the direction $v=(1,s)=(1,0)$, see \eqref{eq:dirMv}, since we have assumed $s=0$. We will show that 
\[
\bigcup_{\substack{R\in\mathcal R_0^\star }} I_R\times 3K_R  \subset  \big\{(x,y)\in\R^2:\, \M_2 (\ind_E)(x,y)\geq C\big\}
\]
for a sufficiently small constant $C>0$, where $\M_2$ is as in \eqref{eq:dirMv}.  To this end let us define 
\[
\uppsi(\upalpha)\coloneqq   \frac{1}{|I_R|}\int_{I_R\times\{\upalpha\}} \sum_{Q\in\mathcal L^{\mathrm{out}} _{I_R,K_R} }a_Q \frac{\ind_Q}{|Q|}.
\]
Note that  
\[
\begin{split}
& \uplambda<B_{I_R,K_R} ^{\mathrm{out}}=\avgint_{K_R} \uppsi(\upalpha) \,\d \upalpha\leq \frac{1}{|K_R|}\int_{\{K_R:\,\uppsi(\upalpha)>\uplambda/2\}}\uppsi(\upalpha)\,\d \upalpha+\frac{\uplambda}{2} 
\leq \frac{c\uplambda}{|K_R|}|\{K_R:\, \uppsi(\upalpha)>\uplambda/2\}|+\frac\uplambda2
\end{split}
\]
which readily yields the existence of $K'\subset K_R$ with 
\[ 
|K_R|\lesssim |K'|, \qquad \inf_{x\in I_R}\inf_{y\in K'}\M_v \bigg[ \sum_{Q\in\mathcal L^{\mathrm{out}} _{I_R,K_R} } a_Q \frac{\ind_Q}{|Q|}\bigg](x,y)>\frac\uplambda 2.
\]
This in turn implies that $\M_2(\ind_E)\gtrsim 1$ on $I_R\times 3K_R$.  Now we can conclude
\begin{equation}\label{eq:measure}
	\begin{split}
	 &\quad   \bigg |\bigcup_{\substack{R\in\mathcal R_0^\star }}  I_R\times 3K_R\bigg| \leq \big|  \{\M_2(\ind_E)\gtrsim 1\}\big| \lesssim  |E|  \lesssim\frac{1}{\uplambda} \mathsf{mass}_{a,1}({\mathcal L})
\end{split}
\end{equation}
by the weak $(1,1)$ inequality of the directional Hardy-Littlewood maximal function $\M_{(1,0)}$.

On the other hand we have for the rectangles $R\in \mathcal R_0 '\setminus \mathcal R_0 ^\star$ that
\[
\bigcup_{R\in\mathcal R_0 '\setminus \mathcal R_0 ^\star} I_R\times 3L_R\subset \Big\{\M_{\mathcal R_0}\Big(\sum_{Q\in\mathcal L} a_Q\frac{\ind_Q}{|Q|}\Big)>\frac{\uplambda}{3} \big)\Big\}.
\]
Thus we get by the weak $(p,p)$ assumption for $\M_{\mathcal R_0}$  that
\[
\begin{split}
&\bigg|\bigcup_{R\in\mathcal R_0 '\setminus \mathcal R_0 ^\star} I_R\times 3L_R\bigg|\le \bigg|\Big\{\M_{\mathcal R_0}\big(\sum_{Q\in\mathcal L} a_Q\frac{\ind_Q}{|Q|}>\frac{\uplambda}{3} \big)\Big\}\bigg|\lesssim \frac{A_p ^p}{\uplambda ^p}\mass_{a,p}(\mathcal L)
\\
&\qquad\qquad\lesssim \frac{A_p ^p}{\uplambda^p}\mass_{a,1}(\mathcal L)\mathsf{U}_2(\mathcal L) ^{2(p-1)}.
\end{split}
\]
By the subordination property of $\mathcal L_1$ we get
\[
\mass_{a,1}(\mathcal L_1)\leq \bigg|\bigcup_{R\in\mathcal R_0 ^\star} I_R\times 3K_R \cup \bigcup_{R\in\mathcal R_0 '\setminus \mathcal R_0 ^\star} I_R\times 3L_R\bigg|\leq \frac12 \mass_{a,1}(\mathcal L)
\]
upon choosing $\uplambda \geq C \max(1,A_p\mathsf{U}_2 (\mathcal L)^\frac{2}{p'})$ with sufficiently large $C>1$.
\end{proof}
%%%%%%%%%%%%%%%%%%%%%%%%%%%%%% PROOF PROOF PROOF

%%%%%%%%%%%%%%%%%%%%%%%%%%%%%% PROOF PROOF PROOF
\begin{proof}[Proof of Lemma \ref{lem:shadow}] Fix $s\in S$ and choose $\uplambda$ in the definition of $\mathcal R_{s,k}$ to be the value given by Lemma \ref{lem:iter} with $\mathcal L=\mathcal R=\cup_{s\in S}\mathcal R_s$. Let  $j=0$ and $\mathcal L_0=\mathcal L_{j}\coloneqq \mathcal R$.    Construct $\mathcal L_{1}=\mathcal L_{j+1}\subset  \mathcal R$ such that $\mathsf{mass}_{a,1}({\mathcal L_1})\leq \frac 12 \mathsf{mass}_{a,1}(\mathcal L_0)$. Since $B_R ^{\mathcal L_0}>k\uplambda $ for all $R\in\mathcal R_{s,k}$ we have 
\[
\uplambda k<B_R ^{\mathcal L_0 }\leq \uplambda +B_R ^{\mathcal L_1}\implies B_R ^{\mathcal L_1}>\uplambda(k-1).
\]
Repeat  the procedure inductively with $j+1$ in place of $j$. When $j=k-1$  we have reached the collection $\mathcal L_{k-1}$ with $\mathsf{mass}_{a,1}({\mathcal L_{k-1}})\lesssim  2^{-k}\mathsf{mass}_{a,1}(\mathcal L_0)$  and $B_R ^{\mathcal L_{k-1}}>\uplambda$. This last condition and Remark~\ref{rmrk:BRbig} imply that
	\[
	\mathsf{sh}(\mathcal R_{s,k}) \subset  \bigg\{\M_{\mathcal R_s}\bigg[\sum_{Q\in\mathcal L_{k-1}}a_Q\frac{\ind_Q}{|Q|}\bigg]>\uplambda\bigg\}
	\]
and so, using \eqref{eq:convex},
\[
\begin{split}
& \big | \mathsf{sh}(\mathcal R_{s,k})\big|\leq \frac{A_p ^p}{\uplambda ^p}\mathsf{mass}_{a,p}({\mathcal L_{k-1}}) ^p \leq  \frac{A_p ^p}{\uplambda ^p} \mathsf{mass}_{a,1}(\mathcal L_{k-1}) ^{p-\frac{2p}{p'}} \mathsf{mass}_{a,2}(\mathcal L_{k-1}) ^{\frac{2p}{p'}}
\\
& \qquad \leq  2^{-k}\mathsf{mass}_{a,1}  (\mathcal L_0)  \frac{C A_p ^p  }{\uplambda^p}\Bigg(\frac{\mathsf{mass}_{a,2}(\mathcal L_0	)^2}{\mathsf{mass}_{a,1}(\mathcal L_0)}\Bigg)^{p-1} =  2^{-k}\mathsf{mass}_{a,1} (\mathcal L_0)   \frac{C A_p ^p  }{\uplambda^p}\mathsf{U}_2(\mathcal L_0)^{2(p-1)}
\end{split}
\]
and the lemma follows by the definition of $\uplambda$ since $\mathcal L_0=\mathcal R$.
\end{proof}
%%%%%%%%%%%%%%%%%%%%%%%%%%%%%% PROOF PROOF PROOF

%%%%%%%%%%%%%%%%%%%%%%%%%%%%%% SECTION SECTION SECTION 
\section{A weighted Carleson embedding and applications to maximal directional operators} 
\label{s:wcarleson} In this section, we provide a weighted version of the directional Carleson embedding theorem. We then derive, as applications, novel weighted norm inequalities for maximal and singular directional operators. 

The proof of the weighted Carleson embedding follows the strategy used for Theorem~\ref{thm:carleson}, with suitable modifications. In order to simplify the presentation, we restrict our scope to collections of parallelograms $\mathcal R=\{\bigcup \mathcal R_s:s\in S\}$ with the property that the maximal operator $\M_{\mathcal R_s}$ associated to each collection $\mathcal R_s$ satisfies the appropriate weighted weak-$(1,1)$ inequality. This is the case, for instance, when the collections $\mathcal R_s$ are of the form 
\begin{equation}
\label{e:onepar2020}
\mathcal R_s \subset \mathcal D^2_{s,\cdot,k}\, , \qquad \mathcal D^2_{s,\cdot,k} \coloneqq
\bigcup_{k_1\leq k}
\mathcal D^2 _{s,k_1,k}
\end{equation}
for a fixed $k\in \mathbb Z$. In other words, the parallelograms in direction $s$ have fixed vertical sidelength and arbitrary eccentricity. 
\subsection{Directional weights} Let $S$ be a set of slopes and $w,u\in L^1 _{\mathrm{loc}}(\R^2)$ be nonnegative functions, which we refer to as \emph{weights} from now on. Our weight classes are related to the maximal operator 
\[\mathrm{M}_{S;2}\coloneqq \mathrm{M}_{V}\circ \M_{(0,1)},\] recalling that $M_V=M_{\{(1,s):s\in S\}}$ is the directional maximal operator defined in \eqref{e:Mv}.
 We introduce the two-weight directional constant
\[
[w,u]_{S}\coloneqq    \sup_{x\in \R^2} \frac{ \mathrm{M}_{S;2} w(x)}{u(x)}. 
\] 
We pause to point out some relevant examples of pairs $w,u$ with $[w,u]_{S}<\infty$.  Recall that for $p>2$, $\|\mathrm{M}_{S;2}\|_{p\to p} \lesssim (\log \#S)^{1/p}$;  this is actually a special case of Theorem \ref{thm:carleson} and interpolation. Therefore, if $g\geq 0$ belongs to the unit sphere of $L^p(\R^2)$,
\[
w\coloneqq \sum_{\ell=0}^\infty \frac{\mathrm{M}_{S;2}^{[\ell]} g}{2^\ell\left\|\mathrm{M}_{S;2}\right\|_{p\to p}^\ell} 
\] 
satisfies $[w,w]_{S}\leq 2 \|\mathrm{M}_{S;2} \|_{p\to p}$; here $T^{[\ell]}$ denotes $\ell$-fold composition of an operator $T$ with itself. We also highlight the relevance of $[w,u]_{S}$ in Theorem \ref{thm:wcarleson} below by noticing that
\begin{equation}
\label{e:weighted1bd}
\sup_{s\in S } \big\| \M_{\mathcal D^2_{s,\cdot,k}}: \, L^1(u)\to L^{1,\infty}(w)\big\| \lesssim  [w,u]_{S}
\end{equation}
with absolute implicit constant. This result is obtained via the classical Fefferman-Stein inequality in direction $s$ paired with the remark that $\M_{\mathcal D^2_{s,\cdot,k}} w \lesssim \M_{S; 2} w \leq   [w,u]_{S} u$.

%%%%%%%%%%%%%%%%%%%%%%%%%%%%%% SECTION SECTION SECTION 
\subsection{Weighted Carleson sequences} We begin with the weighted analogue of Definition~\ref{def:carleson}, which is given with respect to a fixed weight $w$. 
%%%%%%%%%%%%%%%%%%%%%%%%%%%%%% DEFINITION DEFINITION DEFINITION
\begin{definition}\label{def:wcarleson}Let $a=\{a_R\}_{R\in\mathcal D_S ^2}$ be a sequence of nonnegative numbers. Then $a$ will be called an \emph{$L^\infty$-normalized} $w$-Carleson  sequence if for every $\mathcal L \subset \mathcal D_S ^2$ which is subordinate to some collection $\mathcal T \subset  \mathcal P^2_\uptau$ for some  fixed $\uptau\in S$, we have
	\[
\sum_{L\in\mathcal L}a_L\leq  w(\mathsf{sh}(\mathcal T )),
	\qquad
	 \mathsf{mass}_a \coloneqq \sum_{R\in\mathcal D_S ^2} a_R 
	<\infty.\]
\end{definition}
%%%%%%%%%%%%%%%%%%%%%%%%%%%%%% DEFINITION DEFINITION DEFINITION

As before,  if $\mathcal R\subset \mathcal D_\uptau ^2$ for some fixed $\uptau\in S$ then $\mathcal R$ is subordinate to itself and
	\[
	\mathsf{mass}_{a,1} (\mathcal R)=\sum_{R\in\mathcal R }a_R\leq  w(\mathsf{sh}(\mathcal R )),\qquad \mathcal R \subset \mathcal D_\uptau ^2\quad\text{for some fixed}\,\, \uptau\in S.
	\]
Throughout this section all Carleson sequences and related quantities are taken with respect to some fixed weight $w$ which is suppressed from the notation.
We can now state our weighted Carleson embedding theorem.

%%%%%%%%%%%%%%%%%%%%%%%%%%%%%% THEOREM THEOREM THEOREM
\begin{theorem} 
\label{thm:wcarleson}Let $S\subset[-1,1]$ be a finite set of $N$ slopes  and $\mathcal R\subset \mathcal D_S ^2$. Let $w,u$ be weights with $[w,u]_{S}<\infty $ and such that  \[
\sup_{s\in S}\big\| \M_{\mathcal R_s}: \, L^1(u)\to L^{1,\infty}(w)\big\|\lesssim  [w,u]_S.
\] 
Then for every $L^\infty$-normalized $w$-Carleson sequence $a=\{a_R\}_{R\in\mathcal D_S ^2}$ we have
	\[
	\left(\int \left|T_{\mathcal R}(a)(x)\right|^2 \frac{\mathrm d x}{\M_{\mathcal R}u(x)}\right)^{\frac12}  \lesssim    (\log N)^{\frac12} [w,u]_S  \mathsf{mass}_{a,1}(\mathcal R) ^\frac12.
	\]
\end{theorem}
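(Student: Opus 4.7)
The plan is to mimic the bootstrap proof of Theorem \ref{thm:carleson}, replacing the Lebesgue-based weak-$(1,1)$ ingredients with the weighted weak-$(1,1)$ hypothesis provided in the statement. Introduce the dimensionless quantity
\[
\tilde{\mathsf U}(\mathcal R) \coloneqq \sup_{\substack{\mathcal L \subset \mathcal R\\ a=\{a_R\}}} \frac{\bigl(\int |T_{\mathcal L}(a)|^2/\M_{\mathcal R} u\bigr)^{1/2}}{\mathsf{mass}_{a,1}(\mathcal L)^{1/2}}
\]
over finite subcollections $\mathcal L$ and $L^\infty$-normalized $w$-Carleson sequences, with the aim of proving $\tilde{\mathsf U}(\mathcal R)\lesssim (\log N)^{1/2}[w,u]_S$. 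The first step is a weighted $TT^\ast$-type expansion: since $\int_{R\cap Q}(\M_{\mathcal R} u)^{-1}$ is symmetric in $R,Q$, the partial-order reduction in \eqref{e:badnessnorm} goes through and yields
\[
\int \frac{|T_{\mathcal R}(a)|^2}{\M_{\mathcal R} u} \leq 2\sum_{R\in \mathcal R} a_R \hat B_R^{\mathcal R},\qquad \hat B_R^{\mathcal L} \coloneqq \frac{1}{|R|}\int_R \frac{1}{\M_{\mathcal R} u}\sum_{\substack{Q\leq R\\ Q\in\mathcal L}} a_Q \frac{\ind_Q}{|Q|}.
\]
Partitioning $\mathcal R_s$ into level sets $\mathcal R_{s,k}\coloneqq \{R\in\mathcal R_s:\, k\lambda \leq \hat B_R^{\mathcal R}< (k+1)\lambda\}$ and applying the $w$-Carleson bound $\sum_{R\in\mathcal R_{s,k}} a_R\leq w(\mathsf{sh}(\mathcal R_{s,k}))$ (valid because $\mathcal R_{s,k}$ has fixed slope $s$) in a split at $k\simeq\log N$ yields, in direct analogy with \eqref{eq:main_split},
\[
\sum_R a_R \hat B_R^{\mathcal R} \lesssim \lambda (\log N) \mathsf{mass}_{a,1}(\mathcal R) + \lambda N \sum_{k>\log N} k\sup_{s\in S}w(\mathsf{sh}(\mathcal R_{s,k})).
\]

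The workhorse is then a weighted analogue of Lemma \ref{lem:iter}: for $\lambda$ exceeding an appropriate constant multiple of $[w,u]_S$ (possibly of $[w,u]_S^{2}$, the exact power being dictated by how the weighted weak-$(1,1)$ hypothesis cascades through the inside/outside analysis), one constructs $\mathcal L_1\subset\mathcal L$ with $\mathsf{mass}_{a,1}(\mathcal L_1)\leq \tfrac12 \mathsf{mass}_{a,1}(\mathcal L)$ and $\hat B_R^{\mathcal L}\leq \lambda + \hat B_R^{\mathcal L_1}$ on $\{\hat B_R^{\mathcal L}>\lambda\}$. The geometric inside/outside decomposition and the slicing identity \eqref{eq:slice} from the unweighted case transfer verbatim, as they encode coordinate facts about parallelograms. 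The new ingredient is the $w$-measure estimate for the union $\bigcup (I_R\times 3K_R)$: setting
\[
g^{\star}\coloneqq \frac{1}{\M_{\mathcal R}u}\sum_{Q\in\mathcal L}a_Q\frac{\ind_Q}{|Q|},
\]
the inside/outside construction embeds these unions into a level set $\{\M_{\mathcal R_s}g^\star>c\lambda\}$, and the weighted weak-$(1,1)$ hypothesis delivers the required $w$-measure bound. The crucial design feature is that the pointwise inequality $u\leq \M_{\mathcal R}u$ forces $\|g^\star\|_{L^1(u)}\leq \mathsf{mass}_{a,1}(\mathcal L)$, aligning the right-hand side of the weak-type hypothesis with the $w$-Carleson normalization. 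Iterating this lemma $k$ times as in the proof of Lemma \ref{lem:shadow} then produces the exponential shadow decay $\sup_s w(\mathsf{sh}(\mathcal R_{s,k}))\lesssim 2^{-k} \mathsf{mass}_{a,1}(\mathcal R)$; substituting back, summing the resulting geometric series, and taking a square root with $\lambda \simeq [w,u]_S^2$ closes the bootstrap.

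The main technical obstacle is the weighted iteration lemma itself. In the unweighted proof of Lemma \ref{lem:iter}, weak-$(1,1)$ for the vertical maximal $\M_2$ and for the directional maximal $\M_v$ are used in sequence, each contributing a constant of order one; in the present setting each such use of the weighted weak-$(1,1)$ hypothesis potentially contributes a factor of $[w,u]_S$. Pinning down the correct exponent and ensuring that the inside/outside decomposition can be routed through the single weighted weak-$(1,1)$ assumption on $\M_{\mathcal R_s}$, with the auxiliary function $g^\star$ calibrated by the factor $1/\M_{\mathcal R}u$ so that its $L^1(u)$-norm equals $\mathsf{mass}_{a,1}(\mathcal L)$, is the delicate bookkeeping the argument demands.
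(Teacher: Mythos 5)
Your overall blueprint is correct — expand the weighted $L^2$ norm in a $TT^*$ fashion, run the level-set splitting, prove a weighted analogue of Lemma \ref{lem:iter}, and iterate à la Lemma \ref{lem:shadow} — but there is a genuine gap in your choice of the quantity $\hat B_R^{\mathcal L}$, and it is not merely ``delicate bookkeeping'': as written, the slicing step breaks.

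You define $\hat B_R^{\mathcal L}\coloneqq \avgint_R \frac{1}{\M_{\mathcal R}u}\sum_{Q\leq R} a_Q\frac{\ind_Q}{|Q|}$, keeping the pointwise factor $(\M_{\mathcal R}u)^{-1}$ inside the average. You then assert that ``the geometric inside/outside decomposition and the slicing identity \eqref{eq:slice} from the unweighted case transfer verbatim, as they encode coordinate facts about parallelograms.'' This is not so. The slicing estimate \eqref{eq:slice} compares the $Q$-mass seen on a one-dimensional slice $I\times\{\upalpha\}$ with its average over $I\times 3K$, and its proof depends only on $|Q\cap(I\times\{\upalpha\})|$ — a purely geometric cross-section length. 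Once you insert the pointwise weight $(\M_{\mathcal R}u)^{-1}$, that comparison can fail: the weight may concentrate on a neighborhood of the slice and be negligible on the rest of $I\times 3K$, so there is no way to dominate the sliced average by the two-dimensional one with a uniform constant. The entire inside/outside machinery, the definition of the maximal box $K_R$, and the Whitney maximality argument would all need to be re-proved with a variable weight inside the integrand, and you have not done this.

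The paper sidesteps this obstruction before it can arise. Rather than carrying $\upsigma=(\M_{\mathcal R}u)^{-1}$ inside the averages, it uses the pointwise lower bound $\M_{\mathcal R}u\geq \langle u\rangle_Q=u(Q)/|Q|$ on $Q$ to get $\upsigma(Q\cap R)\leq \frac{|Q|}{u(Q)}|Q\cap R|$, and therefore replaces the balayage normalization $a_Q/|Q|$ by the \emph{constant-per-$Q$} normalization $a_Q/u(Q)$: the resulting $B_R^{\mathcal L}=\avgint_R\sum_{Q\leq R}a_Q\ind_Q/u(Q)$ has no pointwise weight in the integrand. With that modification the geometric slicing estimate genuinely is a coordinate fact about parallelograms and transfers intact, and the same function $\sum_Q a_Q\ind_Q/u(Q)$ has $L^1(u)$-norm exactly $\mass_{a,1}(\mathcal L)$ for the weighted weak-$(1,1)$ input. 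If you perform this reduction first, the rest of your outline (the weighted iteration lemma with threshold $\uplambda\gtrsim [w,u]_S$, the exponential shadow decay, and the final bootstrap) does go through essentially as you describe, and you also won't need to guess whether the right power is $[w,u]_S$ or $[w,u]_S^2$.
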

%%%%%%%%%%%%%%%%%%%%%%%%%%%%%% THEOREM THEOREM THEOREM

%%%%%%%%%%%%%%%%%%%%%%%%%%%%%% SECTION SECTION SECTION 
\subsection{Proof of Theorem~\ref{thm:wcarleson}} We follow the proof of Theorem~\ref{thm:carleson} and only highlight the differences to accommodate the weighted setting. Write $\upsigma\coloneqq [\M_{\mathcal R}u]^{-1}$. Expanding the $L^2(\upsigma)$-norm we have
\[
\|T_{\mathcal R}(a)\|_{L^2(\upsigma)}^2 \leq2\sum_{R\in\mathcal R}a_R  \sum_{\substack{Q\in\mathcal R\\Q\leq R}}a_Q \frac{\upsigma(Q\cap R)}{|Q||R|}.
\]
From the definition of $\upsigma$ we have that 
\[
\upsigma(Q\cap R)\leq  \frac{|Q\cap R|}{\inf_Q  \M_{\mathcal R}u}  \leq \frac{|Q|}{u(Q)}|Q\cap R	|
\]
whence
\[
\|T_{\mathcal R}(a)\|_{L^2(\upsigma)}^2 \leq2 \sum_{R\in\mathcal R}a_R  \avgint_R\sum_{\substack{Q\in\mathcal R\\Q\leq R}}a_Q\frac{\ind_Q}{u(Q)} \coloneqq 2 \sum_{R\in\mathcal R}a_R B_R ^{\mathcal R}
\]
where now for any $\mathcal L\subset  \mathcal R$ we have defined
\[
B_R ^{\mathcal L}\coloneqq \avgint_R \sum_{\substack{Q\in\mathcal L\\Q\leq R}} a_Q\frac{\ind_Q}{u(Q)}.
\]
Defining the families $\mathcal R_{s,k}$ for $s\in S$ and $k\in\N$ as in \eqref{eq:rsk} we then have the estimate
\[
\|T_{\mathcal R}(a)\|_{L^2(\upsigma)}^2\leq 2   \uplambda  \Big[(\log N) \mathsf{mass}_{a,1}(\mathcal R)+ N \sum_{k> \log N} k \sup_{s\in S}   w(\mathsf{sh}(\mathcal R_{s,k}))\Big].
\]
Again $\uplambda>0$ is a constant that will be determined later in the proof and in the last line we used the $w$-Carleson assumption for the sequence $a=\{a_R\}$ for rectangles in a fixed direction.

We need the weighted version of Lemma~\ref{lem:iter}, which is given under the standing assumptions of Theorem \ref{thm:wcarleson}.

%%%%%%%%%%%%%%%%%%%%%%%%%%%%%% LEMMA LEMMA LEMMA
\begin{lemma} \label{lem:witer} Let  $a=\{a_R:R\in \mathcal D_S ^2\}$ be an $L^\infty$-normalized $w$-Carleson sequence, $s\in S\subset [-1,1]$, and $\mathcal L,\mathcal R \subset  \mathcal D_S ^2 $ with $\mathcal L \subseteq \mathcal R$.   For every $\uplambda>C[w,u]_S$ where $C$ is a suitably chosen absolute constant, there exists $\mathcal L_1 \subset  \mathcal L $ such that:
\begin{itemize}
	\item [\emph{(i)}]  $\mathsf{mass}_{a,1}({\mathcal L_1})\leq  \frac{1}{2} \mathsf{mass}_{a,1}({\mathcal L}) $;
	 % and $\mathsf{sh}(\mathcal L_1  )\leq \frac{s_{\mathcal L}}{2}$.
	\item[\emph{(ii)}] denoting by
	 $\mathcal R_{s} '$  the collection of rectangles $R$ in $\mathcal R_s$ with $B^{\mathcal L} _R>\uplambda$ we have that 
\[
B^{\mathcal L} _R \leq \uplambda +B^{\mathcal L_1} _{R}\qquad \forall R\in\mathcal R_s '.
\]
\end{itemize}
\end{lemma}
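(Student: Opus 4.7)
The plan is to mimic the proof of Lemma~\ref{lem:iter} closely, replacing Lebesgue measure with the weighted measures $u,w$ at the appropriate places, and using the pointwise inequality $w\leq [w,u]_S u$ (obtained from $\mathrm{M}_{S;2}w\leq [w,u]_S u$ and Lebesgue differentiation) in lieu of the trivial weight comparison available in the unweighted setting. By shear invariance we work with $s=0$. With the weighted balayage
\[
B^{\bullet}_{I,K}\coloneqq \avgint_{I\times K}\sum_{Q\in \mathcal L^{\bullet}_{I,K}} a_Q\frac{\ind_Q}{u(Q)}, \qquad \bullet\in\{\mathrm{in},\mathrm{out}\},
\]
matching the expansion of $\|T_{\mathcal R}(a)\|^2_{L^2(\upsigma)}$ that opens the proof of Theorem~\ref{thm:wcarleson}, I would retain the same inside--outside splittings $\mathcal L^{\mathrm{in}}_{I,K}, \mathcal L^{\mathrm{out}}_{I,K}$ together with the stopping construction: for $R=I_R\times L_R\in\mathcal R_0'$ select the maximal $K_R\in\{3^k L_R:k\geq 0\}$ with $B^{\mathrm{out}}_{I_R,K_R}>\uplambda$, and set $K_R=L_R$ otherwise. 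The purely geometric slice estimate \eqref{eq:slice} depends only on the spatial shape of the $Q$'s and transfers verbatim to the new $B$'s.

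Bounding the ``out'' average at each $R$ then proceeds as in Lemma~\ref{lem:iter}: the contribution of $\mathcal L^{\mathrm{out}}_{I_R,3K_R}$ is $\leq\uplambda$ by maximality of $K_R$, while the residual family $\mathcal Q = \mathcal L^{\mathrm{out}}_{I_R,K_R}\setminus \mathcal L^{\mathrm{out}}_{I_R,3K_R}$ has $\uppi_1(Q)\subset I_R$ and $\uppi_2(Q)\subset 9K_R$, hence is subordinate to the singleton $\{I_R\times 9K_R\}\subset\mathcal P_0^2$. The $w$-Carleson property combined with $w\leq [w,u]_S u$ then provides $\sum_{Q\in\mathcal Q}a_Q\leq w(I_R\times 9K_R)\leq [w,u]_S u(I_R\times 9K_R)$; the subordination structure is used to absorb the $1/u(Q)$ factors against this $u$-mass, yielding a residual average of order $[w,u]_S$. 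Taking $\uplambda\geq C[w,u]_S$ then gives $B^{\mathcal L}_R\leq \upkappa\uplambda + B^{\mathcal L_1}_R$ with $\mathcal L_1$ the union of the inside pieces $\mathcal L^{\mathrm{in}}_{I_R,K_R}$ over all $R\in\mathcal R_0'$, proving (ii).

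For (i), one bounds $\mass_{a,1}(\mathcal L_1)\leq w(\mathsf{sh}(\mathcal L_1))$ by the $w$-Carleson condition, and covers $\mathsf{sh}(\mathcal L_1)\subset \bigcup_R I_R\times 3K_R\cup \bigcup_R I_R\times 3L_R$ by level sets of the directional maximal functions $\M_v$ (with $v=(1,0)$, via a Fefferman--Stein inequality) and $\M_{\mathcal R_0}$ applied to $F\coloneqq \sum_Q a_Q\ind_Q/u(Q)$. The key arithmetic identity $\|F\|_{L^1(u)}=\int F\,u=\sum_Q a_Q=\mass_{a,1}(\mathcal L)$ together with the hypothesis $\|\M_{\mathcal R_0}:L^1(u)\to L^{1,\infty}(w)\|\lesssim [w,u]_S$ and the analogous weighted weak-type bound for $\M_v$ then yields $w(\mathsf{sh}(\mathcal L_1))\lesssim [w,u]_S\,\mass_{a,1}(\mathcal L)/\uplambda\leq \tfrac12\mass_{a,1}(\mathcal L)$ for $\uplambda\geq C[w,u]_S$ with $C$ sufficiently large.

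The hard part will be the $[w,u]_S$-bound on the residual ``out'' average from the second paragraph: in the unweighted case the trivial estimate $|Q\cap(I_R\times 3K_R)|/|Q|\leq 1$ reduces the residual sum to a Carleson count, but the $1/u(Q)$ normalization introduces a factor $|Q|/u(Q)=1/\avgint_Q u$ with no uniform pointwise upper bound, forcing a more delicate redistribution of $u$-mass against the $w$-Carleson total via $w\leq [w,u]_S u$ and the approximate doubling of the concentric rectangles $I_R\times 3^k K_R$.
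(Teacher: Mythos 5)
Your outline follows the paper's proof very closely: the same inside--outside splitting, the same stopping time selecting the maximal $K_R\in\{L_R,3L_R,\ldots\}$ with $B^{\mathrm{out}}_{I_R,K_R}>\uplambda$ (with the $K_R=L_R$ case handling what the paper calls $\mathcal R_0''$), the same covering of $\mathsf{sh}(\mathcal L_1)$ by maximal-function level sets, and the same invocation of Fefferman--Stein to prove (i). One small omission there: the paper needs Fefferman--Stein twice, once in the vertical direction $e_2$ and once in the direction $v=(1,0)$, to pass from $w(\{\M_2 \ind_E\gtrsim 1\})$ down to $\frac{1}{\uplambda}\sum_Q a_Q \M_V\M_2 w(Q)/u(Q)$; you mention only $\M_v$, but this is easy to fill in.

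The genuine gap is the residual ``out'' estimate, which you correctly flag as the hard part but whose proposed resolution does not work. The pointwise inequality $w\leq [w,u]_S\,u$ alone gives $u(Q)\geq w(Q)/[w,u]_S$, but plugging this in leaves $\sum_Q a_Q\,|Q\cap(I\times 3K)|/(|I\times 3K|\,w(Q))$, and neither the $w$-Carleson condition (which controls $\sum_Q a_Q$, not $\sum_Q a_Q/w(Q)$) nor ``approximate doubling'' helps: no doubling of $w$ or $u$ is assumed, and the parallelograms $Q$ point in many directions so $\sum_Q |Q\cap(I\times 3K)|/|Q|$ is not bounded. The tool that closes this is the full maximal-function form of the weight hypothesis, not its pointwise corollary: for every $Q\subset I\times 9K$ and every $x\in Q$ one has $\M_{S;2}w(x)\geq \M_{(1,0)}\M_{(0,1)}w(x)\gtrsim \avgint_{I\times 9K}w$, hence
\[
u(Q)\;\geq\; [w,u]_S^{-1}\int_Q \M_{S;2}w\;\gtrsim\;[w,u]_S^{-1}\,|Q|\,\avgint_{I\times 9K}w.
\]
This replaces $1/u(Q)$ by $[w,u]_S\,|I\times 9K|/(|Q|\,w(I\times 9K))$, a \emph{uniform} bound depending only on the big box, and the residual sum then closes against $\sum_Q a_Q\leq w(I\times 9K)$ exactly as in the unweighted case, producing $\Xi\lesssim [w,u]_S$. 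Without this step the argument does not go through.
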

%%%%%%%%%%%%%%%%%%%%%%%%%%%%%% LEMMA LEMMA LEMMA

%%%%%%%%%%%%%%%%%%%%%%%%%%%%%% PROOF PROOF PROOF
\begin{proof}   We can assume that $s=0$ and let $\mathcal R_0 '$ be the collection of rectangles in $\mathcal R_0$ such that $B_R ^\mathcal L>\uplambda$, where $\uplambda$ is as in the statement of the lemma and  $C$  will be  specified at the end of  the proof. For $I\in \{\uppi_1(R): R \in \mathcal R_0'\}$  and any interval $K\subset \R$ we define $\mathcal L_{I,K} ^{\mathrm{in}} $ and $\mathcal L_{I,K} ^{\mathrm{out}}$ as in the proof of Theorem~\ref{thm:carleson} but now we set
\[
B_{I,K} ^{\mathrm{in}}\coloneqq \avgint_{I\times K}   \sum_{Q\in \mathcal L_{I,K} ^{\mathrm{in}} } \frac{a_Q}{u(Q)}\ind_{Q},
\quad
B_{I,K} ^{\mathrm{out}}\coloneqq \avgint_{I\times K}  \sum_{Q\in \mathcal L_{I,K} ^{\mathrm{out}} }\frac{a_Q}{u(Q)}\ind_{Q}.
\]

We define   $\mathcal R_0 ''$ to be the subcollection of those $R=I\times L\in\mathcal R_0 '$ such that $B_{I,L} ^{\mathrm{out}}\leq \uplambda $. By linearity we get for each $R\in\mathcal R_0''$  that $B_R ^{\mathcal L}  \leq \uplambda+B_{I,L} ^{\mathrm{in}}\leq \uplambda +B_R ^{\mathcal L_1 ''}$ where 
\[
\mathcal L _1 {''} \coloneqq \bigcup_{R=I\times L\in\mathcal R_0 ''}\mathcal L_{I,L} ^{\mathrm{in}},\qquad 
\mathsf{sh}(\mathcal   L'' _1) \subset  \bigcup_{R=I\times L \in \mathcal R_0 ''} I\times 3L .
\]
Since $\mathcal R_0 ''\subset  \mathcal R_0 '$ we conclude as before that
\[
\begin{split}
&w \big (\mathsf{sh}(\mathcal L'' _1 )\big) \leq  w\bigg ( \bigcup_{R=I\times L\in\mathcal R_0 ''} I\times 3L  \bigg)\leq w\Big (\Big\{  \M_{\mathcal R_{0}} \Big(\sum_{Q\in\mathcal L}   \frac{a_Q\ind_Q}{u(Q)}\Big)>\frac{\uplambda}{3}\Big\}\Big) 
\\
&\qquad \lesssim  \frac{[w,u]_{S}}{\uplambda} \int_{\R^2} \sum_{Q\in\mathcal R} a_Q \frac{\ind_Q}{u(Q)}\,\d u = \frac{[w,u]_S}{\uplambda} \mass_{a,1}( \mathcal L)
\end{split}
\]
by the two-weight  weak type $(1,1)$ inequality for $\M_{\mathcal R_{s}}=\M_{\mathcal R_{0}}$. Now $\mathcal L'' _1$ is subordinate to the collection $\{I\times 3L:I\times L \in \mathcal R_0 '' \}$. Using the definition of a Carleson sequence we have
\[
\begin{split}
\sum_{Q\in\mathcal L'' _1} a_Q &\leq  w \bigg(\bigcup_{R=I\times L \in \mathcal R_0 ''} I\times 3L\bigg) \lesssim \frac{[w,u]_S}{\uplambda}\mass_{a,1}(\mathcal L)
\end{split}
\]
and so $ \mathsf{mass}_{a,1}({\mathcal L'' _1 })\lesssim [w,u]_S \mathsf{mass}_{a,1}({\mathcal L})/\uplambda$.

It remains to deal with parallelograms 
\[
R=I\times L 
\in \mathcal R^\star_0\coloneqq\mathcal R_0 '\setminus \mathcal R_0''\qquad B_{I,L} ^{\mathrm{out}}> \uplambda.
\] 
We define the maximal $K_R$ such that $B_{I,K_R} ^{\mathrm{out}}>\uplambda$ as before; the existence of this maximal interval can be guaranteed for example by assuming the collection $\mathcal R$ is finite. We have for each $R=I\times L\in \mathcal R^\star _0$ that  $B_{I,L} ^{\mathrm{out}}> \uplambda$  so  $K_R\supset  L$ and $B_{I,3K_R } ^{\mathrm{out}}\leq\uplambda$ by maximality.

Now using \eqref{eq:slice} we get that
\[
\Delta\coloneqq \sum_{Q\in\mathcal L^{\mathrm{out}} _{I,3K_R} } a_Q \frac{|Q\cap( I\times\{\upalpha\})|}{{u}(Q)|I|}\lesssim  \sum_{Q\in\mathcal L^{\mathrm{out}} _{I,3K_R} }a_Q \frac{|Q\cap (I\times 3K_R)|}{{u}(Q)||3K_R||I|}=\avgint_{I\times 3K_R}\sum_{Q\in \mathcal L^{\mathrm{out}} _{I,3K_R} }a_Q\frac{\ind_Q}{{u}(Q)}\lesssim \uplambda
\]
by the maximality of $K_R$. On the other hand
\[
\Xi\coloneqq \sum_{Q\in\mathcal L^{\mathrm{out}} _{I,K} \setminus \mathcal L^{\mathrm{out}} _{I,3K} }a_Q \avgint_{I\times\{\upalpha\}}   \frac{\ind_Q}{{u}(Q)} \lesssim  \sum_{Q\subset  I\times 9K} a_Q \frac{|Q\cap (I\times 3K)|}{|I\times 3K|{u}(Q)}.
\]
Since $\M_{\mathcal R_s} w\leq \M_V\M_2w\leq [w,u]_S {u}$ uniformly in $s$ we get that for $Q\subset  I\times 9K$
\[
{u}(Q) \gtrsim [w,u]_S ^{-1}\frac{w(I\times 9K)}{|I\times 9K|}|Q|
\]
and by this and the $w$-Carleson property for all $Q$s subordinate to $I\times 9K$ we get 
\[
\Xi\lesssim [w,u]_S \leq \uplambda
\]
provided $\uplambda \geq  [w,u]_S$.
We now define
\[
\mathcal L_1 {'} \coloneqq \bigcup_{R\in \mathcal R_0 ^\star} \mathcal L_{\uppi_1(R),K_R} ^{\mathrm{in}}
\]
so that 
\[
\mathsf{sh}(\mathcal L' _1)\subset  \bigcup_{R\in\mathcal R_0 ^\star} \uppi_1(R)\times K_R.
\]
Arguing as in the unweighted case of Theorem~\ref{thm:carleson} we can estimate
\[
w(\mathsf{sh}(\mathcal L' _1))\leq w\Bigg(\bigcup_{R\in\mathcal R_0 ^\star} \uppi_1(R)\times K_R\Bigg)\lesssim w(\{\M_2 (\ind_E)\gtrsim 1\})
\]
where
\[
E\coloneqq \bigg\{(x,y)\in \R^2:\, \M_v \bigg[  \sum_{Q\in\mathcal L} a_Q \frac{\ind_Q}{{u}(Q)}\bigg](x,y)\geq \frac12 \uplambda\bigg\}.
\]
In the definition of $E$ above we have that $\M_v=\M_{(1,s)}=\M_{1}$ since we have reduced to the case $v=(1,s)=(1,0)$. Using the subordination property of $\mathcal L' _1$ and the Fefferman-Stein inequality once in the direction $e_2$ for $\M_2$ and once in the direction $v=(1,s)=(1,0)$ for $\M_v$ we estimate
\[
\mass_{a,1}(\mathcal L' _1)\leq w\Bigg(\bigcup_{R\in\mathcal R_0 ^\star} \uppi_1(R)\times K_R\Bigg)\lesssim \frac{1}{\uplambda} \sum_{Q\in\mathcal L} a_Q\frac{\M_V\M_2w(Q)}{{u}(Q)}\leq \frac{[w,u]_S}{\uplambda}\mass_{a,1}(\mathcal L).
\]
We have thus proved the lemma upon setting $\mathcal L_1 \coloneqq \mathcal L_1 ^{''}\cup \mathcal L_1 '$ and choosing $\uplambda\geq C [w,u]_S$ for a sufficiently large numerical constant $C>1$.
\end{proof}
%%%%%%%%%%%%%%%%%%%%%%%%%%%%%% PROOF PROOF PROOF

Repeating the steps in the proof of Lemma~\ref{lem:shadow} for $\uplambda$ as in the statement of Lemma~\ref{lem:witer} we get for the sets $\mathcal R_{s,k}$ defined with respect to this $\uplambda$ that
\[
w(\mathsf{sh}(\mathcal R_{s,k}))\lesssim 2^{-k} \mass_{a,1}(\mathcal R),
\]
and this completes the proof of Theorem \ref{thm:wcarleson}.

\subsection{Applications of Theorem \ref{thm:wcarleson}} The first corollary of Theorem \ref{thm:wcarleson} is a two-weighted estimate for the directional maximal operator $\M_V$ from \eqref{e:Mv}. 

%%%%%%%%%%%%%%%%%%%%%%%%%%%%%% THEOREM THEOREM THEOREM
\begin{theorem}\label{thm:maxweight}Let $V\subset \mathbb S^1$ be a finite set of $N$ slopes and $w$ be a weight on $\R^2$. Then 
\[
\left\|\M_V : {L^{2}(\widetilde{\M_{V}}w)}\to {L^{2,\infty}(w)}\right\|\lesssim \sqrt{\log N} 
 \qquad \widetilde{\M_{V}}  \coloneqq \M_{V} \circ  \M_{V} \circ \max\{\M_{(1,0)},\M_{(0,1)}\}.
\]
\end{theorem}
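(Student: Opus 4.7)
The plan is to deduce the weak-type bound from the weighted Carleson embedding Theorem~\ref{thm:wcarleson} via a stopping-time decomposition, following the strategy of Katz--Bateman for the unweighted analogue $\|\M_V:L^2\to L^{2,\infty}\|\lesssim \sqrt{\log N}$ described in the introduction.

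First, take as auxiliary weight $u\coloneqq \widetilde{\M_V}w=\M_V(\M_V w_0)$ with $w_0\coloneqq \max(\M_1,\M_2)w$. Using $\M_V g\geq g$ pointwise a.e.\ for $g\geq 0$ (Lebesgue differentiation applied direction by direction), one checks
\[
\M_{S;2}w=\M_V\M_2 w\leq \M_V w_0\leq \M_V(\M_V w_0)=u,
\]
hence $[w,u]_S\leq 1$. A standard grid-shift argument restricts the analysis to one-parameter dyadic families $\mathcal R_s\subset \mathcal D^2_{s,\cdot,k}$, for which \eqref{e:weighted1bd} supplies the weighted weak-$(1,1)$ hypothesis required by Theorem~\ref{thm:wcarleson}.

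Next, dualize: the desired estimate is equivalent to
\[
\int_E \M_V f\,\d w\lesssim \sqrt{\log N}\, w(E)^{1/2}\|f\|_{L^2(u)}
\]
for all measurable $E$ with $0<w(E)<\infty$. A measurable selection produces, for each $x$ in $\{\M_V f>0\}$, a sheared dyadic parallelogram $R(x)\in\bigcup_s\mathcal D^2_{s,\cdot,k(s)}$ with $x\in R(x)$ and $\l f\r_{R(x)}\gtrsim \M_V f(x)$; the passage from 1D segment averages (implicit in the definition of $\M_V$) to 2D parallelogram averages relies on the axis-parallel smoothing $\max(\M_1,\M_2)$ built into the weight $u$. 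With $E_R\coloneqq\{x\in E:R(x)=R\}$ and $a_R\coloneqq w(E_R)$, disjointness of $\{E_R\}_R$ ensures that $a$ is an $L^\infty$-normalized $w$-Carleson sequence with $\mass_{a,1}(\mathcal R)=w(E)$. Cauchy--Schwarz then gives
\[
\int_E \M_V f\,\d w \lesssim \sum_R \l f\r_R a_R=\int f\cdot T_\mathcal R(a)\,\d x \leq \|f\|_{L^2(u)}\,\|T_\mathcal R(a)\|_{L^2(1/u)},
\]
while Theorem~\ref{thm:wcarleson} with weights $(w,u)$ yields $\|T_\mathcal R(a)\|_{L^2(1/\M_\mathcal R u)}\lesssim \sqrt{\log N}\, w(E)^{1/2}$. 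Chaining these two estimates via the pointwise bound $\M_\mathcal R u\lesssim u$ completes the argument.

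The key technical obstacle is the pointwise comparison $\M_\mathcal R u\lesssim u$. For $R\in\mathcal R$ a parallelogram in direction $v=(1,s)\in V$ with vertical short side, Fubini produces $\l u\r_R\lesssim \M_v\M_2 u$ at a point of $R$, reducing the task to $\M_V\M_2 u\lesssim u$. The triple-composition structure $\widetilde{\M_V}=\M_V\circ\M_V\circ\max(\M_1,\M_2)$ is engineered precisely to accommodate this: the innermost $\max(\M_1,\M_2)$ controls the $\M_2$ produced by Fubini, while the two outer $\M_V$'s absorb the Fubini-induced $\M_v$. Making this absorption rigorous --- together with the measurable selection and dyadic pigeonholing underlying the linearization step --- is the principal technical point of the proof.
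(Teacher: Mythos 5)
Your high-level scheme — dualize, linearize into a $w$-Carleson sequence indexed by parallelograms, and then invoke Theorem~\ref{thm:wcarleson} — is the right one and matches the paper's. But the step you yourself flag as the ``key technical obstacle,'' namely the pointwise absorption $\M_{\mathcal R} u \lesssim u$ for $u = \widetilde{\M_{V}}w$, is the place where the argument actually breaks. By Fubini, $\M_{\mathcal R} u \lesssim \M_V\M_2 u$, and with $u = \M_V\M_V\M_2 w$ this gives $\M_V\M_2\M_V\M_V\M_2 w$, which is \emph{not} dominated by $\M_V\M_V\M_2 w$: the Fubini-produced $\M_2$ and $\M_v$ land on the outside of $u$, so they cannot be absorbed by the inner $\max(\M_1,\M_2)$ or by the two inner $\M_V$'s, contrary to your heuristic. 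Already in one dimension $\M(\M\M w)\not\lesssim\M\M w$ — testing on $w=\ind_{[0,1]}$ gains a factor $\log(e+|x|)$ at each iteration — and the same obstruction survives in the directional setting. The paper sidesteps this entirely by taking $u\coloneqq \M_V\circ\M_2\, w$ (one iteration fewer than yours) and then \emph{defining} $z \coloneqq \M_{\mathcal R}u$, so that the weight $(\M_{\mathcal R}u)^{-1}$ appearing in the conclusion of Theorem~\ref{thm:wcarleson} coincides with $z^{-1}$ exactly, with no pointwise comparison of iterated maximal operators required; the passage from this $z$ to $\widetilde{\M_{V}}w$ is part of the ``standard limiting arguments'' that approximate $\M_V$ by fixed-vertical-scale operators $\M_{\mathcal R}$.

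There is also a gap in your linearization. You posit a measurable selection $R(x)$ with $\l f\r_{R(x)}\gtrsim \M_V f(x)$, i.e.\ that a two-dimensional parallelogram average controls a one-dimensional segment average. This fails for general $f$ (take $f$ concentrated on a thin neighbourhood of a line through $x$: the segment average along that line is $\sim 1$ while every two-parameter average is small). The ``axis-parallel smoothing built into $u$'' cannot rescue this: $u$ is a weight on the $L^2$ side and does nothing to regularize $f$. The paper instead linearizes the operator $\M_{\mathcal R}$ itself, which is genuinely expressible as $U f = \sum_R \l f\r_R\ind_{F_R}$, and then reduces $\M_V$ to $\M_{\mathcal R}$ by the same limiting procedure. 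If you reorganize the proof around $\M_{\mathcal R}$ and define $z$ to match the Carleson embedding output rather than trying to verify an $A_1$-type property of $\widetilde{\M_{V}}w$, the argument closes.
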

%%%%%%%%%%%%%%%%%%%%%%%%%%%%%% THEOREM THEOREM THEOREM

%%%%%%%%%%%%%%%%%%%%%%%%%%%%%% REMARK REMARK REMARK 
\begin{remark} In the proof below, we argue for almost horizontal $V$, and use $\M_{(0,1)}$ in place of   $\max\{\M_{(1,0)},\M_{(0,1)}\}$. The usage of  $\max\{\M_{(1,0)},\M_{(0,1)}\}$ is made necessary to make the statement of the theorem invariant under rotation of $V$.  
\end{remark}
%%%%%%%%%%%%%%%%%%%%%%%%%%%%%% REMARK REMARK REMARK 

%%%%%%%%%%%%%%%%%%%%%%%%%%%%%% PROOF PROOF PROOF
\begin{proof}  By standard limiting arguments, it suffices to prove that for each $k\in \mathbb Z$ the estimate
\begin{equation}
\label{e:weightpf1}
\left\|\M_{\mathcal R} : {L^{2}(z)}\to {L^{2,\infty}(w)}\right\|\lesssim \sqrt{\log N}, \qquad z\coloneqq \M_{\mathcal R} \circ  \M_{V} \circ \M_{(0,1)}w,
\end{equation}
when $\mathcal R$ is a one-parameter collection as in \eqref{e:onepar2020}, holds uniformly in $k$.

For a nonnegative function $f\in\mathcal S(\R^2)$ let $Uf$ be a linearization of $\M_{\mathcal R}f$, namely
\[
\M_{\mathcal R}f(x)=U f(x)= \frac{1}{|R(x)|}\int_{R(x)}f(y)\,\d y = \sum_{R\in \mathcal R} \l f \r_R \cic{1}_{F_R}(x), \qquad F_R\coloneqq\{x\in R:\, R(x)=R \}.
\]   By duality, \eqref{e:weightpf1} turns into \begin{equation}
\label{e:weightpf2}
\|U^*(w\cic{1}_E) \|_{L^2(z^{-1}) }\lesssim \sqrt{\log N}  \sqrt{w(E)},\qquad \forall E\subset \R^2.
\end{equation}
We can easily calculate
\[
U^*(w\cic{1}_E)= \sum_{R\in\mathcal R} w(E\cap F_R)\frac{\ind_R}{|R|}
\]
and it is routine to check that $\{w(E\cap F_ R)\}_{R\in\mathcal R}$ is a $w$-Carleson sequence according to Definition~\ref{def:wcarleson}. The main point here is that the sets $\{E\cap F_R\}_{R\in\mathcal R}$ are by definition pairwise disjoint and $F_R\subseteq R$ for each $R\in\mathcal R$.

Setting $u\coloneqq   \M_{V} \circ \M_{{(0,1)}} w, $ if $S$ are the slopes of $V$,  it is clear that $[w,u]_S\lesssim 1$ and that $z^{-1}= (\M_{\mathcal R} u)^{-1}$. Therefore \eqref{e:weightpf2} follows from an application of Theorem \ref{thm:wcarleson}.
\end{proof}
%%%%%%%%%%%%%%%%%%%%%%%%%%%%%% PROOF PROOF PROOF

We may in turn use Theorem \ref{thm:maxweight} to establish a  weighted norm inequality for maximal directional singular integrals with controlled dependence on the cardinality $\#V=N$. Similar considerations may be used to yield  weighted bounds for directional singular integrals in $L^p(\R^2)$ for $p>2$; we do not pursue this issue.

%%%%%%%%%%%%%%%%%%%%%%%%%%%%%% THEOREM THEOREM THEOREM
\begin{theorem}\label{thm:singweight} Let $K$ be a standard Calder\'on-Zygmund convolution kernel on $\,\R$ and $V\subset \mathbb S^1$ be a finite set of $N$ slopes. For $\,v\in V$ we define
\[
T_vf (x) = \sup_{\upepsilon>0}\left| \int_{\upepsilon<t<\frac{1}{\upepsilon}} f(x+tv) K(t) \,\d t\right|, \qquad T_V f(x)= \sup_{v\in V} |T_v f(x)|.
\] 
Let $w$ be a weight on $\R^2$ with $[w]_{A_1^V}\coloneqq \|{\M_{V}}w/w\|_\infty<\infty$. Then
\[
\left\|T_{V} :\,{L^{2}(w)}\to {L^{2,\infty}(w)}\right\|\lesssim (\log N)^{\frac32}  [w]_{A_1^V}^{\frac52}.
\]
\end{theorem}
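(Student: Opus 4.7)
The approach is to reduce the maximal directional singular integral $T_V$ to the directional maximal function $\M_V$ by a Cotlar-type inequality applied direction-by-direction, and then invoke Theorem~\ref{thm:maxweight} together with the $A_1^V$ hypothesis. The additional $\sqrt{\log N}\cdot [w]_{A_1^V}$ factor appearing in the target bound compared to Theorem~\ref{thm:maxweight} comes from resolving the near-circularity of the Cotlar estimate via a weighted Calder\'on--Zygmund decomposition.

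\textbf{Step 1: Directional Cotlar's inequality.} Applying the classical one-dimensional Cotlar inequality to the kernel $K$ along the line through $x$ in direction $v\in V$, one gets the pointwise bound
\[
T_v f(x)\;\lesssim\; \bigl[\M_v(|T_v f|^{\delta})(x)\bigr]^{1/\delta} \;+\; C\,\M_v f(x)
\]
valid for any $\delta\in(0,1)$, with constants that are independent of $v$. Taking the supremum over $v\in V$ and using $|T_v f|\le T_V f$ yields the directional Cotlar bound
\[
T_V f(x)\;\lesssim\; \bigl[\M_V(|T_V f|^{\delta})(x)\bigr]^{1/\delta}+C\,\M_V f(x).
\]

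\textbf{Step 2: Weighted control of the maximal function term.} We may assume that $V$ contains both $(1,0)$ and $(0,1)$ (otherwise adjoin them; this changes $N$ by at most $2$ and $[w]_{A_1^V}$ by at most a constant). Then $\max\{\M_{(1,0)},\M_{(0,1)}\}\leq \M_V$, so iterating the $A_1^V$ condition three times gives
\[
\widetilde{\M_V}w \;\leq\; \M_V^{[3]}w \;\lesssim\; [w]_{A_1^V}^{3}\,w.
\]
Combining this with Theorem~\ref{thm:maxweight} produces
\[
\|\M_V f\|_{L^{2,\infty}(w)}\;\lesssim\; \sqrt{\log N}\,[w]_{A_1^V}^{3/2}\|f\|_{L^{2}(w)}.
\]

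\textbf{Step 3: Handling the Cotlar term and absorbing the circularity.} Marcinkiewicz interpolation between the weak-$(2,2)$ bound of Step 2 and the trivial $L^\infty\to L^\infty$ estimate for $\M_V$ gives, for each $p>2$,
\[
\|\M_V\|_{L^{p,\infty}(w)\to L^{p,\infty}(w)}\;\lesssim\; \bigl(\sqrt{\log N}\,[w]_{A_1^V}^{3/2}\bigr)^{2/p}.
\]
Applying this with $p=2/\delta$ to $|T_Vf|^\delta$ and using the Lorentz identity $\|[g^\delta]^{1/\delta}\|_{L^{2,\infty}(w)}=\|g\|_{L^{2,\infty}(w)}$ gives a control of the Cotlar term by $\sqrt{\log N}\,[w]_{A_1^V}^{3/2}\|T_V f\|_{L^{2,\infty}(w)}$. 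This is still circular, so to close it we pass through a weighted Calder\'on--Zygmund decomposition of $f$ at level $\lambda$: using the linearized $\M_V^w$-stopping time, we split $f=g_\lambda+b_\lambda$ with $\|g_\lambda\|_\infty\lesssim \lambda$ and $\|b_\lambda\|_{L^2(w)}^2\lesssim w(\{\M_V f>\lambda\}/[w]_{A_1^V})\cdot\lambda^2$. The good part $g_\lambda$ is handled through the unweighted $L^p$-bound $\|T_V\|_{L^p\to L^p}\lesssim \log N$ from \cite{Dem,DDP} together with a standard $A_\infty$ exchange argument, while the bad part is directly controlled by the weak-$(2,2)$ estimate for $\M_V$ of Step 2.

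\textbf{Main obstacle.} The delicate point is Step 3: closing the inequality without inflating the constants. The need to exchange between the unweighted $L^p$-bound for $T_V$ (which costs a factor $\log N$) and the weighted weak-$(2,2)$ control of $\M_V$ (which costs $\sqrt{\log N}\,[w]_{A_1^V}^{3/2}$) through a single level of Calder\'on--Zygmund decomposition accounts precisely for the extra $\sqrt{\log N}$ and $[w]_{A_1^V}$ factors, giving the announced bound $(\log N)^{3/2}\,[w]_{A_1^V}^{5/2}$; any less careful bookkeeping yields a weaker power of $[w]_{A_1^V}$ or requires a bootstrap in $\log N$.
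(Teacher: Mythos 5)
Your proposal follows a genuinely different route from the paper, and as written it has a circularity issue in Step~1 and a bookkeeping error in the final constant that are worth flagging.

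The paper's argument is shorter: instead of Cotlar's inequality, it uses a weighted upgrade of Hunt's exponential good-$\uplambda$ inequality, applied direction by direction. For each fixed $v\in V$ one has
\[
w\bigl(\{T_v f>2\uplambda,\ \M_v f\leq\upgamma\uplambda\}\bigr)\lesssim\exp\Bigl(-\tfrac{c}{\upgamma[w]_{A_1^V}}\Bigr)\,w\bigl(\{T_v f>\uplambda\}\bigr),
\]
which follows from the one-dimensional statement because $[w]_{A_1^V}$ dominates the $A_\infty$ constant of $t\mapsto w(x+tv)$ for every $x$. Summing this over the $N$ directions and choosing $\upgamma^{-1}\sim[w]_{A_1^V}\log N$ kills the factor of $N$ coming from the union bound, and the standard good-$\uplambda$ machinery then yields
\[
\|T_Vf\|_{L^{2,\infty}(w)}\lesssim\upgamma^{-1}\,\|\M_Vf\|_{L^{2,\infty}(w)}\lesssim\bigl([w]_{A_1^V}\log N\bigr)\cdot\bigl(\sqrt{\log N}\,[w]_{A_1^V}^{3/2}\bigr)\,\|f\|_{L^2(w)},
\]
which is exactly $(\log N)^{3/2}[w]_{A_1^V}^{5/2}$. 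In particular, the extra factor over the weighted weak-$(2,2)$ bound for $\M_V$ is $\upgamma^{-1}=(\log N)[w]_{A_1^V}$, not the $\sqrt{\log N}\,[w]_{A_1^V}$ claimed in your plan: your target, with that accounting, would come out to $(\log N)[w]_{A_1^V}^{5/2}$, off by a factor of $\sqrt{\log N}$.

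Two further concrete issues with the Cotlar route. First, as you have written it, the Cotlar inequality has the truncated maximal operator $T_v$ on both sides; the classical statement controls $T_v$ by $\M_v(|T_v^{\#}f|^{\updelta})^{1/\updelta}$ where $T_v^{\#}$ is the \emph{untruncated} operator, so your Step~1 is circular before Step~3 even starts. Even after fixing this, $\sup_v|T_v^{\#}f|$ is still a directional maximal operator over $N$ directions whose weighted $L^2$ bound is not any more accessible than that of $T_V$ itself. Second, Step~3 is too thin to evaluate: the displayed control of $\|b_\uplambda\|^2_{L^2(w)}$ is not a well-formed statement (the set $\{\M_Vf>\uplambda\}/[w]_{A_1^V}$ is not defined), the claimed unweighted $\|T_V\|_{L^p\to L^p}\lesssim\log N$ from \cite{Dem,DDP} would need to be quoted precisely with the right exponent, and it is not explained why $T_V$ applied to the bad part can be dominated by the weak-$(2,2)$ estimate for $\M_V$. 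It may well be that a weighted Calder\'on--Zygmund decomposition argument can be made to work, but the route through the exponential good-$\uplambda$ inequality bypasses all of these obstacles: it reduces the weighted bound for $T_V$ to the already-established weighted bound for $\M_V$ in a single clean step, and the $A_1^V$ hypothesis enters precisely through the $A_\infty$ decay rate.
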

%%%%%%%%%%%%%%%%%%%%%%%%%%%%%% THEOREM THEOREM THEOREM

We sketch the proof, which is a weighted modification of the arguments for  \cite[Theorem 1]{DDP}. Hunt's classical exponential good-$\uplambda$ inequality, see  \cite[Proposition 2.2]{DDP} for a proof, may be upgraded to 
\begin{equation}
\label{e:weightpf3}
w\left(\left\{ x\in \R^2 :T_vf(x) > 2\uplambda, \M_{v} f(x) \leq  \upgamma \uplambda \right\}\right) \lesssim \exp\left(-{\textstyle\frac{c}{\upgamma [w]_{A_1^V}}}\right) w\left(\left\{ x\in \R^2 :T_vf(x) > \uplambda\right\}\right) 
\end{equation}
by using that $[w]_{A_1^V}$ dominates the $A_\infty$ constant of the one-dimensional weight $t\mapsto w(x+tv)$ for all $x\in \R^2,v\in V$, together with Fubini's theorem. With \eqref{e:weightpf3} in hand, Theorem \ref{thm:singweight} follows from Theorem~\ref{thm:maxweight} via standard good-$\uplambda$ inequalities, selecting $(\upgamma)^{-1}\sim[w]_{A_1^V} \log N$. Note that the right hand-side of the estimate in the conclusion of Theorem~\ref{thm:maxweight} becomes $[w]_{A_1} ^\frac{3}{2} \sqrt{\log N}$ when the estimate is specified to $A_1 ^V$ weights as the ones we consider here.

%%%%%%%%%%%%%%%%%%%%%%%%%%%%%% SECTION SECTION SECTION 
\section{Tiles, adapted families, and intrinsic square functions} \label{s:isf}
 We define here some general notions of tiles and adapted families of wave-packets: definitions in this spirit have appeared in, among others \cite{BarrLac,DDP,LL2,LR,LL1}. These will be essential for the time-frequency analysis square functions we use in this paper in order to model the main operators of interest. After presenting these abstract definitions we show some general orthogonality estimates for wave packet coefficients.  We then detail how these notions are specialized in three particular cases of interest.

%%%%%%%%%%%%%%%%%%%%%%%%%%%%%% SECTION SECTION SECTION 
\subsection{Tiles and wavelet coefficients}Throughout this section we fix a finite set of slopes $S\subset [-1,1]$. Remember that alternatively we will refer to the set of vectors $V\coloneqq\{(1,s):\, s\in S\}$. A \emph{tile} is a set $t\coloneqq R_t \times \Omega_t \subset \R^2\times \R^2$ where $R_t \in\mathcal D_S ^2$ and $\Omega_t\subset \R^2$ is a measurable set, and $|R_t||\Omega_t|\gtrsim 1$.  We denote by $s(t)\in S$ the slope such that $R_t\in \mathcal D_{s(t)} ^2$, and then 
\[
R_t = A_{s(t)}(I_t \times J_t)\quad\text{with}\quad I_t\times J_t \in \mathcal D _0 ^2.
\]
We also use the notation $v_t\coloneqq (1,s(t))$. There are several different collections of tiles used in this paper, they will generically be denoted by $\tiles,\tiles_1,\tiles '$ or similar. Given any collection of tiles $\tiles$ we will use often use the notation $\mathcal R_{\tiles }\coloneq\{R_t:\, t\in \tiles\}$ to denote the collection of spatial components of the tiles in $\tiles$. The exact geometry of these tiles will be clear from context, however several estimates hold for generic collections of tiles as we will see in \S\ref{sec:orthocone}.

Let $t=R_t\times \Omega_t$ be a   tile  and $M\geq 2$. We denote by $\mathcal A_t^{M}$ the collection  of  Schwartz functions $\upphi$ on $\R^2$ such that:
\begin{itemize}
	\item [(i)]  $\mathrm{supp}(\widehat{\upphi})\subset \Omega_t$,
	\item[(ii)]  There holds
	\[
	\sup_{0\leq \upalpha,\upbeta\leq M} \sup_{x\in \R^2} 
	|R_t|^{\frac12} |I|^\upalpha|J|^\upbeta   \left( 1 + \frac{|x\cdot v_t|}{|I||v_t|}\right)^{M} \left( 1 +    \frac{|x\cdot e_2|}{|J|} \right)^{M}
	\left|\partial_{v_t}^\upalpha \partial_{e_2}^\upbeta \upphi (x+c_{R_t}) \right| \leq 1.
\]
\end{itemize}
In the above display  $c_{R_t}$ refers to  the center of  $R_t$ and $\partial_{v_t}(\cdot)\coloneqq \frac{v_t}{|v_t|}\cdot \nabla (\cdot)$. An immediate consequence of property (ii) is the normalization
\[
\sup_{\upphi \in \mathcal A_t^{M} }\|\upphi\|_{2} \lesssim 1.
\] 
We thus refer to $\mathcal A_t^{M}$ as the collection of \emph{$L^2$-normalized  wave packets adapted to $t$ of order $M$}. { For our purposes, it will suffice to work with moderate values of $M$, say $2^3\leq M \leq 2^{50}$. In fact, we use $M=M_0=2^{50}$ in the definition of the  \emph{intrinsic wavelet coefficient}   associated with the tile $t$ and the Schwartz  function $f$:\begin{equation}
\label{e:intwcdef}
a_{t}(f) \coloneqq \sup_{\upphi\in \mathcal A_t^{M_0} }| \l f,\upphi \r|^2,\qquad M_0=2^{50}.
\end{equation}}

This section is dedicated to square functions involving wavelet coefficients associated with particular collections of tiles which formally look like
\[
\Delta_\tiles(f)^2 \coloneqq \sum_{t\in\tiles} a_t(f)\frac{\ind_{R_t}}{|R_t|},\qquad\tiles\,\text{is a collection of tiles}.
\]
 We begin by proving some general global and local orthogonality estimates for collections of tiles with finitely overlapping frequency components. These estimates will be essential in showing that the sequence $\{a_t(f)\}_{t\in\tiles}$ is Carleson in the sense of Section~\ref{sec:carleson}, when $|f|\leq \ind_E$ for some measurable set $E\subset \R^2$ with $0<|E|<\infty$. This in turn will allow us to use the directional Carleson embedding of Theorem~\ref{thm:carleson} in order to conclude corresponding estimates for intrinsic square functions defined on collections of tiles.

%%%%%%%%%%%%%%%%%%%%%%%%%%%%%% SECTION SECTION SECTION 
\subsection{Orthogonality estimates for collections of tiles}\label{sec:orthocone}  We begin with an easy orthogonality estimate for wave packet coefficients. For completeness we present a sketch of proof which has a $TT^*$  flavor. The argument follows the lines of proof of \cite{LR}*{Proposition 3.3}.

%%%%%%%%%%%%%%%%%%%%%%%%%%%%%% LEMMA LEMMA LEMMA
\begin{lemma}\label{l:globalortho}Let $\mathbf T$ be a set of tiles such that $\sum_{t\in\tiles}\ind_{\Omega_t}\lesssim 1$, let $M\geq 2^3$ and $\{\upphi_t:t\in\tiles\}$ be such that $\upphi_t \in \mathcal A_{t}^{M} $ for all $t\in \tiles$. We have the estimate
\begin{equation}\label{e:ttstarlemma}
 \sum_{t\in \mathbf T} |\l f,\upphi_t \r|^2 \lesssim\|f\|_2^2, 
\end{equation}
and as a consequence
\[
 \sum_{t\in \mathbf T} a_{t}(f) \lesssim \|f\|_2^2.
\]
\end{lemma}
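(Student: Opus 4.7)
The natural approach is a $TT^*$-style argument in which the bounded frequency overlap $\sum_{t\in\tiles}\ind_{\Omega_t}\lesssim 1$ plays the role of orthogonality after Plancherel. First I would transfer the problem to the frequency domain: by property (i) in the definition of $\mathcal A_t^M$ one has $\mathrm{supp}(\widehat{\upphi_t})\subset \Omega_t$, so
\[
\langle f,\upphi_t\rangle=\langle \widehat f,\widehat{\upphi_t}\rangle = \int_{\Omega_t}\widehat f\,\overline{\widehat{\upphi_t}}.
\]
The $L^2$-normalization $\|\widehat{\upphi_t}\|_2=\|\upphi_t\|_2\lesssim 1$ (which follows from property (ii) with $\upalpha=\upbeta=0$ after changing variables by the shearing $A_{s(t)}$ and integrating the resulting product of polynomial weights) combined with Cauchy--Schwarz gives the cell-wise bound
\[
|\langle f,\upphi_t\rangle|^2\lesssim \int_{\Omega_t}|\widehat f|^2.
\]

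The second step is to sum in $t$ and exchange sum with integral, invoking the bounded frequency overlap:
\[
\sum_{t\in\tiles}|\langle f,\upphi_t\rangle|^2\lesssim \int_{\R^2}|\widehat f(\xi)|^2\sum_{t\in\tiles}\ind_{\Omega_t}(\xi)\,\d \xi\lesssim \|\widehat f\|_2^2=\|f\|_2^2,
\]
which is exactly \eqref{e:ttstarlemma}. Note that the argument is insensitive to the specific value of the decay parameter $M$: only the frequency localization and the $L^2$ normalization features of $\mathcal A_t^M$ enter, so the same proof works uniformly for any admissible $M\geq 2^3$.

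To deduce the second conclusion, I would select, for each $t\in\tiles$ and each $\epsilon>0$, an approximate extremizer $\upphi_t^{\epsilon}\in \mathcal A_t^{M_0}$ satisfying $|\langle f,\upphi_t^{\epsilon}\rangle|^2 \geq a_t(f)-\epsilon 2^{-n(t)}$ for some enumeration $n:\tiles\to \N$. Since the classes $\mathcal A_t^M$ are nested decreasingly in $M$ and $M_0=2^{50}\geq 2^3$, every such $\upphi_t^\epsilon$ also belongs to $\mathcal A_t^{2^3}$, so \eqref{e:ttstarlemma} applies to the family $\{\upphi_t^\epsilon\}_{t\in\tiles}$. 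Letting $\epsilon\to 0$ then yields $\sum_t a_t(f)\lesssim \|f\|_2^2$.

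The main obstacle is essentially nonexistent at this level: the Plancherel identity together with the bounded frequency overlap completely sidesteps the estimation of off-diagonal wave-packet pairings $|\langle \upphi_t,\upphi_{t'}\rangle|$ that would appear in a naive Schur-test implementation of $TT^*$. The significance of this lemma lies not in its proof but in providing the global $\ell^2$ bookkeeping that feeds into the deeper directional Carleson embedding of Theorem~\ref{thm:carleson} in the square-function applications that follow.
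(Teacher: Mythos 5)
Your argument has a genuine gap. The crux is that in all the tile collections to which this lemma is actually applied (see \eqref{eq:smoothconetiles}, \eqref{eq:tilesskm}, \eqref{eq:rdftiles}), \emph{many tiles share the same frequency box $\Omega_t$}: for a fixed $\Omega$, the spatial components $R_t$ of tiles with $\Omega_t=\Omega$ sweep out a dyadic grid covering all of $\R^2$. The hypothesis $\sum_{t\in\tiles}\ind_{\Omega_t}\lesssim 1$ must therefore be read as bounded overlap of the \emph{distinct} frequency supports $\{\Omega\in\Omega(\tiles)\}$; read literally over tiles, it would fail trivially in every application. With this (correct) reading, your chain of inequalities breaks at the second step. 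Cauchy--Schwarz gives $|\langle f,\upphi_t\rangle|^2 \lesssim \int_{\Omega_t}|\widehat f|^2$, and this bound is \emph{identical} for all tiles $t$ sharing the frequency box $\Omega_t$: the Fourier-side Cauchy--Schwarz has completely erased the spatial localization of $\upphi_t$. Summing it over the infinitely many tiles with the same $\Omega_t$ yields $\infty\cdot\int_{\Omega_t}|\widehat f|^2$, not the desired finite bound; equivalently, $\sum_{t\in\tiles}\ind_{\Omega_t}(\xi)$ counted over tiles is generically infinite.

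Your closing remark --- that the argument ``completely sidesteps the estimation of off-diagonal wave-packet pairings $|\langle\upphi_t,\upphi_{t'}\rangle|$'' --- is exactly the opposite of what is needed. The orthogonality among tiles with the same $\Omega_t$ comes from their spatial separation, and this is where property (ii) (the decay of $\upphi_t$ away from $R_t$) is indispensable, not optional. The correct strategy is to first fix a frequency box $\Omega$ and prove the restricted bound $B_\Omega(g)\coloneqq\sum_{t:\Omega_t=\Omega}|\langle g,\upphi_t\rangle|^2\lesssim\|g\|_2^2$; this is where the $TT^*$/Schur-test step $\|P_\Omega(g)\|_2^2\leq B_\Omega(g)\sup_{t'}\sum_t|\langle\upphi_t,\upphi_{t'}\rangle|$ lives, and the inner sup-sum is controlled by the polynomial spatial decay of the wave packets. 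Only afterwards does one sum over distinct $\Omega\in\Omega(\tiles)$, applying $B_\Omega$ to the projection $S_\Omega f=(\widehat f\ind_\Omega)^\vee$ and using the bounded overlap of the $\Omega$'s together with Plancherel. Your Fourier-side Cauchy--Schwarz is only valid in the degenerate case where there is at most one tile per frequency box.
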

%%%%%%%%%%%%%%%%%%%%%%%%%%%%%% LEMMA LEMMA LEMMA

%%%%%%%%%%%%%%%%%%%%%%%%%%%%%% PROOF PROOF PROOF
\begin{proof} Fix $M\geq 2^3$.
It suffices to prove that for $\|f\|_2=1$ and an arbitrary adapted family of wave packets  $\{\upphi_t:\, \upphi_t\in \mathcal A_t ^M,\, t\in \tiles\}$ there holds
\begin{equation}
\label{e:ttstar}
B\coloneqq \sum_{t\in \mathbf T} |\l f,\upphi_t \r|^2 \lesssim 1,
\end{equation}
Let us first fix some $\Omega\in \Omega(\tiles)\coloneqq \{\Omega_t:\, t\in\tiles\}$ and consider the family 
\[
\tiles(\{\Omega\})\coloneqq \{t\in\tiles:\,\Omega_t = \Omega\}.
\]
To prove \eqref{e:ttstar}, we introduce
\[
B_\Omega(g)\coloneqq \sum_{t\in \mathbf T(\{\Omega\})} |\l g,\upphi_t \r|^2,\qquad  S_\Omega(g)\coloneqq (\hat g\ind_\Omega)^\vee.
\]
We claim that $B_\Omega(g)\lesssim \|g\|_2 ^2$ for all $g$, uniformly in $\Omega\in\Omega(\tiles)$. Assuming the claim for a moment and remembering the finite overlap assumption on the frequency components of the tiles we have
\[
B=\sum_{\Omega\in\Omega(\tiles)}B_\Omega(S_\Omega f)\lesssim \sum_{\Omega\in\Omega(\tiles)}\|S_\Omega(f)\|_2 ^2\leq \bigg\|\sum_{\Omega\in\Omega(\tiles)}\ind_\Omega \bigg\|_\infty ^2 \| f\|_2 ^2\lesssim 1
\]
as desired. It thus suffices to prove the claim. To this end let
\[
P_\Omega(g)\coloneqq \sum_{t\in \mathbf T(\{\Omega\})} \l g,\upphi_t \r \upphi_t. 
\]
Then for any $g$ with $\|g\|_2=1$ we have that $B_\Omega(g)=\l P_\Omega(g),g \r \leq \|P_\Omega(g)\|_2 $ and it suffices to prove that $\|P_\Omega(g)\|_2^2\lesssim B_\Omega(g)$. A direct computation reveals that
\[
\|P_\Omega(g)\|_2^2 \leq B_\Omega(g)\sup_{t' \in \tiles(\{\Omega\})} \sum_{ t \in \tiles(\{\Omega\}) } |\l \upphi_t,\upphi_{t'} \r| \lesssim B
\]
where the second inequality in the last display above follows by  the polynomial  decay of the wave packets $\{\upphi_t:\, \, \Omega_t=\Omega\}$. This completes the proof of the lemma.
\end{proof}
%%%%%%%%%%%%%%%%%%%%%%%%%%%%%% PROOF PROOF PROOF

We present below a localized orthogonality statement which is needed in order to verify that the coefficients $a_{t}(f)$ form a Carleson sequence in the sense of \S\ref{sec:carleson}. Verifying this Carleson condition relies on a variation of Journ\'e's lemma that can be found in \cite[Lemma 3.23]{CLMP}; we rephrase it here adjusted to our notation. In the statement of the lemma below we denote by $\mathrm{M}_{\mathcal P_s ^2}$ the \emph{maximal function} corresponding to the collection $\mathcal P_s ^2$ where $s\in S$ is a fixed slope. Note  that the proof in \cite{CLMP} corresponds to the case of slope $s=0$ but the general case $s\in S$ follows easily by a change of variables.  Remember here that we have $S\subset [-1,1]$.

In the statement of the lemma below two parallelograms are called \emph{incomparable} if none of them is contained in the other.

%%%%%%%%%%%%%%%%%%%%%%%%%%%%%% LEMMA LEMMA LEMMA
\begin{lemma} \label{l:journe} Let $s\in S$ be a slope and $\mathcal T\subset \mathcal  D_s ^2$ be a collection of pairwise incomparable parallelograms. Define
\[
\mathsf{sh}^\star(\mathcal T)\coloneqq \big\{\mathrm{M}_{\mathcal P_s ^2} \cic{1}_{\mathsf{sh}(\mathcal T)} > 2^{-6} \big\} 
\]
and for each $R \in \mathcal T$ let $u_R$ be the least integer $u$ such that $2^{u}R\not\subset \mathsf{sh}^\star(\mathcal T) .$ Then 
\[
\sum_{\substack{R\in \mathcal T\\ u_R=u}} |R| \lesssim 2^u| \mathsf{sh}(\mathcal T)|.
\]
\end{lemma}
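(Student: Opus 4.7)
The plan is to reduce the statement to the axis-parallel case $s=0$, for which the result is the classical Journ\'e product covering lemma as formulated in \cite{CLMP}.

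\textbf{Step 1: Reduction to $s=0$.} I would apply the inverse shear $A_s^{-1}$, which is area-preserving, maps $\mathcal D_s^2$ bijectively onto $\mathcal D_0^2$, and---since $|s|\le 1$---intertwines $\mathrm M_{\mathcal P_s^2}$ with $\mathrm M_{\mathcal P_0^2}$ up to universal multiplicative constants. Under this transformation the image collection $A_s^{-1}\mathcal T \subset \mathcal D_0^2$ is still pairwise incomparable, the shadow transforms equivariantly as $\mathsf{sh}(A_s^{-1}\mathcal T)=A_s^{-1}\mathsf{sh}(\mathcal T)$, the enlarged shadow $\mathsf{sh}^\star$ is preserved up to a modification of the threshold $2^{-6}$ by an absolute constant, and the minimality indices $u_R$ are preserved up to an additive $O(1)$. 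Hence it suffices to establish the lemma for $s=0$.

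\textbf{Step 2: Two-step enlargement in the axis-parallel case.} For $s=0$, I would use the pointwise bound $\mathrm M_{\mathcal P_0^2} f \lesssim \mathrm M_1 \circ \mathrm M_2 f$, where $\mathrm M_j$ is the one-dimensional Hardy--Littlewood maximal operator in the $j$-th coordinate; this domination is a direct consequence of Fubini applied to an axis-parallel rectangle. Choosing suitably small constants $c_1,c_2$ this gives $\mathsf{sh}^\star(\mathcal T)\subset E$ with $E_v:=\{\mathrm M_2 \ind_{\mathsf{sh}(\mathcal T)}>c_1\}$ and $E:=\{\mathrm M_1 \ind_{E_v}>c_2\}$. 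The one-dimensional weak $(1,1)$ inequality applied twice yields $|E|\lesssim|E_v|\lesssim|\mathsf{sh}(\mathcal T)|$.

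\textbf{Step 3: Combinatorial summation.} For $R=I_R\times J_R \in \mathcal T_u:=\{R\in\mathcal T:u_R=u\}$, minimality forces $2^{u-1}R\subset \mathsf{sh}^\star(\mathcal T)\subset E$. I would stratify $\mathcal T_u$ by the vertical sidelength $|J_R|$; within a stratum, pairwise incomparability and dyadicity of the horizontal projections force the $I_R$ to be pairwise disjoint. The horizontal containment $2^{u-1}I_R\times J_R\subset E$ is summed over $R$ in each stratum by the 1D weak $(1,1)$ bound for $\mathrm M_1$, producing the horizontal factor $2^{u}$; the remaining vertical expansion $J_R \to 2^{u-1}J_R$ is absorbed by the definition of $E_v$ using the 1D weak $(1,1)$ bound for $\mathrm M_2$ after a corresponding stratification in $|I_R|$. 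Summing over strata and using $|E|\lesssim|\mathsf{sh}(\mathcal T)|$ gives $\sum_{R\in\mathcal T_u}|R|\lesssim 2^u|\mathsf{sh}(\mathcal T)|$.

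\textbf{Main obstacle.} The chief difficulty lies in Step 3. A naive use of the two-parameter strong maximal function would give $2^{2u}|\mathsf{sh}(\mathcal T)|$ on the right-hand side; the improvement to the \emph{linear} factor $2^u|\mathsf{sh}(\mathcal T)|$ is precisely the content of Journ\'e's covering lemma and depends crucially on iterating the 1D maximal function in separate directions, on the pairwise incomparability of $\mathcal T$, and on the careful selection of thresholds $c_1,c_2$ so that the two stratifications (by $|J_R|$ and by $|I_R|$) interact consistently with the two-stage enlargement $\mathsf{sh}(\mathcal T)\to E_v\to E$.
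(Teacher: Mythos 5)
Your Step~1 (reduction to $s=0$ by the shear $A_s^{-1}$) is exactly what the paper does: the authors state that the general slope case follows from the axis-parallel one by a change of variables, and your elaboration of this (area-preservation, $A_s^{-1}(\mathcal D_s^2)=\mathcal D_0^2$, exact intertwining of $\M_{\mathcal P_s^2}$ with $\M_{\mathcal P_0^2}$, preservation of incomparability, shadow, $u_R$) is correct. The paper then simply \emph{cites} \cite[Lemma 3.23]{CLMP} for the axis-parallel case, whereas you attempt in Steps~2--3 to re-derive that result. Step~2 (domination $\M_{\mathcal P_0^2}\lesssim \M_1\circ\M_2$ and the two-stage enlargement with $|E|\lesssim|\mathsf{sh}(\mathcal T)|$) is unobjectionable.

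Step~3, however, contains a genuine gap. Your key combinatorial claim --- ``within a stratum, pairwise incomparability and dyadicity of the horizontal projections force the $I_R$ to be pairwise disjoint'' --- is false. Fixing only the sidelength $|J_R|$ does not make the $I_R$ disjoint: take $R=[0,1]\times[0,\tfrac12]$ and $R'=[0,1]\times[\tfrac12,1]$. Both lie in $\mathcal D_0^2$, both have $|J|=\tfrac12$, they are incomparable (indeed disjoint), yet $I_R=I_{R'}=[0,1]$. The only structural consequence of incomparability together with dyadicity and fixed $|J_R|$ is that \emph{nested} $I_R,I_{R'}$ force \emph{disjoint} $J_R,J_{R'}$; one does not get disjointness of the $I_R$ themselves. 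Consequently the subsequent appeal to the $1$D weak $(1,1)$ bound for $\M_1$ over the $R$'s in a stratum does not go through as stated, and it is also unclear how ``summing over strata'' would avoid picking up an unbounded number of copies of $|E|$. The linear gain $2^u$ (rather than $2^{2u}$) is precisely what makes Journ\'e's covering lemma nontrivial; it requires a genuine stopping-time / maximal-selection argument within the family $\mathcal T$ (for instance, passing to rectangles that are maximal in one parameter for a fixed slice, so that their cross sections are in fact disjoint), which your stratification-by-sidelength scheme does not supply. To complete the proof you should either invoke \cite[Lemma 3.23]{CLMP} as the paper does, or rework Step~3 to incorporate the maximality/selection step that the classical proof of Journ\'e's lemma relies upon.
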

%%%%%%%%%%%%%%%%%%%%%%%%%%%%%% LEMMA LEMMA LEMMA
With the suitable analogue of Journ\'e's lemma in hand  we are ready to state and prove the localized orthogonality condition for the coefficients $a_{t}(f)$.

%%%%%%%%%%%%%%%%%%%%%%%%%%%%%% LEMMA LEMMA LEMMA
\begin{lemma}\label{l:localortho} Let $s\in S$ be a slope, $\mathcal T\subset \mathcal P_s ^2$ be a given collection of parallelograms and $\tiles$ be a collection of tiles such that
	\[
	\mathcal R_{\tiles} \coloneqq \{R_t:\, t\in \tiles\}
	\]
is subordinate to   $\mathcal T$.  Then  we have
\[
\sum_{t\in \tiles} a_{t}(f)\lesssim \left| \mathsf{sh}(\mathcal T) \right| \|f\|_\infty^2.
\]
\end{lemma}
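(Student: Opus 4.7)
The plan is to combine the global orthogonality estimate of Lemma~\ref{l:globalortho}, the rapid spatial decay of wave packets in $\mathcal{A}_t^{M_0}$, and Journ\'e's product-theory estimate (Lemma~\ref{l:journe}) to reduce the problem to the size of the shadow $\mathsf{sh}(\mathcal{T})$. I would first normalize $\|f\|_\infty=1$ and, as replacing $\mathcal{T}$ by its subcollection of maximal (i.e., pairwise incomparable) parallelograms does not alter $\mathsf{sh}(\mathcal{T})$, assume the hypotheses of Lemma~\ref{l:journe} apply to $\mathcal{T}$. Setting $E^\star\coloneqq \mathsf{sh}^\star(\mathcal{T})$, the Jessen--Marcinkiewicz--Zygmund $L\log L$ bound for the strong maximal function $\mathrm{M}_{\mathcal{P}_s^2}$ in the sheared coordinate system (where $\mathcal{P}_s^2$ becomes axis-parallel) yields $|E^\star|\lesssim |\mathsf{sh}(\mathcal{T})|$.

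Next, I would split $f=f_{\mathrm{in}}+f_{\mathrm{out}}$ with $f_{\mathrm{in}}\coloneqq f\ind_{E^\star}$, and use $a_t(f)\lesssim a_t(f_{\mathrm{in}})+a_t(f_{\mathrm{out}})$. The near part is handled directly by Lemma~\ref{l:globalortho}:
\[
\sum_{t\in\tiles}a_t(f_{\mathrm{in}})\lesssim \|f_{\mathrm{in}}\|_2^2\le \|f\|_\infty^2\,|E^\star|\lesssim |\mathsf{sh}(\mathcal{T})|\,\|f\|_\infty^2.
\]
For the far contribution I would assign to each $t\in\tiles$ some $T(t)\in\mathcal{T}$ with $R_t\subseteq T(t)$ (available by subordination) and set $u(t)\coloneqq u_{T(t)}$ as in Lemma~\ref{l:journe}. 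Since $2^{u(t)-1}T(t)\subseteq E^\star$ by the definition of $u_T$, the function $f_{\mathrm{out}}$ vanishes on $2^{u(t)-1}T(t)$. Invoking property (ii) of $\mathcal{A}_t^{M_0}$ and using the containment $R_t\subseteq T(t)$ together with the absolute slope bound $|s(t)-s|\le 2$ afforded by $S\subseteq [-1,1]$ (which ensures that distances in $T(t)$'s coordinates are comparable, up to an absolute factor, to those in $R_t$'s coordinates), I would obtain the pointwise tail estimate
\[
a_t(f_{\mathrm{out}})\lesssim \|f\|_\infty^2\,|R_t|\cdot 2^{-M_0 u(t)}.
\]

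Finally, I would organize the remaining sum via Journ\'e. Grouping tiles by $u=u(t)$ and letting $\mathcal{T}^{(u)}\coloneqq\{T\in\mathcal{T}:u_T=u\}$, Lemma~\ref{l:journe} gives $\sum_{T\in\mathcal{T}^{(u)}}|T|\lesssim 2^u|\mathsf{sh}(\mathcal{T})|$. Combined with the packing estimate $\sum_{t:T(t)=T}|R_t|\lesssim|T|$ for each $T$ (which follows from the frequency overlap condition $\sum_t \ind_{\Omega_t}\lesssim 1$ implicit in the setup of Lemma~\ref{l:globalortho} and the tile normalization $|R_t||\Omega_t|\asymp 1$ via a Plancherel-type count inside $T$), this leads to
\[
\sum_{t\in\tiles} a_t(f_{\mathrm{out}})\lesssim \|f\|_\infty^2\sum_{u\ge 0} 2^{-M_0 u}\sum_{T\in\mathcal{T}^{(u)}}|T|\lesssim \|f\|_\infty^2\sum_{u\ge 0}2^{(1-M_0)u}|\mathsf{sh}(\mathcal{T})|\lesssim |\mathsf{sh}(\mathcal{T})|\,\|f\|_\infty^2,
\]
as required. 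The main obstacle is the tail decay estimate in the third paragraph: one must translate the decay of $\varphi_t\in\mathcal{A}_t^{M_0}$, which is stated in terms of the slope-$s(t)$ coordinates of $R_t$, into decay relative to the slope-$s$ geometry of $T(t)$, verifying that the shearing caused by $|s(t)-s|\le 2$ costs only an absolute multiplicative constant and preserves the exponent $M_0$ needed to beat the $2^u$ growth from Journ\'e's lemma.
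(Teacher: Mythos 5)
There is a genuine gap in the far contribution. Your splitting into $f_{\mathrm{in}}$, $f_{\mathrm{out}}$ and the near-part bound via Lemma~\ref{l:globalortho} and the $L\log L$ estimate $|E^\star|\lesssim |\mathsf{sh}(\mathcal T)|$ are fine, as is the observation that $f_{\mathrm{out}}$ vanishes on $2^{u(t)-1}T(t)$. But the final step rests on the claimed packing estimate $\sum_{t:T(t)=T}|R_t|\lesssim |T|$, and this is false under the hypotheses of the lemma. The only constraints on the tiles are $|R_t||\Omega_t|\gtrsim 1$ and (implicitly, through Lemma~\ref{l:globalortho}) $\sum_t\ind_{\Omega_t}\lesssim 1$; neither prevents many tiles from sharing the \emph{same} spatial component $R_t$. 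For instance, with $S=\{0\}$, $T=[0,1]^2$, and tiles $t_k=T\times([k,k+1]\times[0,1])$ for $k=1,\dots,K$, one has $\mathcal R_{\tiles}=\{T\}$ subordinate to $\mathcal T=\{T\}$, bounded frequency overlap, $|R_{t_k}||\Omega_{t_k}|=1$, yet $\sum_k |R_{t_k}|=K|T|$ with $K$ arbitrary. The ``Plancherel-type count'' you invoke does not yield the claimed bound because it controls frequency-side overlap, not the number of tiles sitting over a single spatial rectangle.

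This is precisely why the paper does not sum pointwise tail estimates. Instead, after reducing (via the $9^2$ shifted dyadic grids) to pairwise incomparable dyadic $\mathcal T$ and decomposing $f$ into dyadic shells $f_n=f\ind_{2^{u+n}T\setminus 2^{u+n-1}T}$, it absorbs the spatial decay of each $\upphi_t$ into a fixed weight $\chi_T$ adapted to $T$ and writes $\langle f_n,\upphi_t\rangle=\langle f_n c^{-1}\chi_T,\, c\chi_T^{-1}\upphi_t\rangle$. The reweighted packets $c\chi_T^{-1}\upphi_t$ remain adapted of order $M_0-20\geq 2^3$, so the \emph{orthogonality} estimate of Lemma~\ref{l:globalortho} applies again, giving $\sum_{t\in\tiles(T)}|\langle f_n,\upphi_t\rangle|^2\lesssim \|f_n\chi_T\|_2^2\lesssim 2^{-40(u+n)}|2^{u+n}T|$; summing in $n$ and then over $T$ via Lemma~\ref{l:journe} completes the argument. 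The key point is that the control of the sum over $t\in\tiles(T)$ must come from the $L^2$ almost-orthogonality of the wave packets, not from a spatial counting of the $R_t$ — the latter is unavailable because frequency multiplicity over a fixed $R_t$ is unbounded. (A secondary, smaller issue: Lemma~\ref{l:journe} is stated for $\mathcal T\subset\mathcal D_s^2$, not $\mathcal P_s^2$, so you also need the reduction to dyadic parallelograms via shifted grids, which the paper carries out explicitly but you pass over.)
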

%%%%%%%%%%%%%%%%%%%%%%%%%%%%%% LEMMA LEMMA LEMMA

%%%%%%%%%%%%%%%%%%%%%%%%%%%%%% PROOF PROOF PROOF
\begin{proof}  We first make a standard reduction that allows us to pass to a collection of dyadic rectangles. To do this we use that there exist at most $9^2$ shifted dyadic grids $\mathcal D_{s,j} ^2$ such that for each parallelogram $T\in \mathcal T$  there exists $\widetilde{T}\in\cup_j \mathcal D^2 _{s,j}$ with $T\subset  \widetilde{T}$ and $|T|\leq |\widetilde{T}|\lesssim |T|$; see for example \cite{HLP}. Now note that for each $\widetilde T\in \widetilde {\mathcal T}$ we have
	\[
	\frac{|T\cap \widetilde{T}|}{|\widetilde{T}|}\gtrsim 1,\qquad \mathsf{sh}(\widetilde {\mathcal T})\subset  \big\{\M_{\mathcal P_S ^2}(\ind_{\mathsf{sh}(\mathcal T)})\gtrsim 1\big\}
	\]
and so $|\mathsf{sh}(\widetilde{\mathcal T})|\lesssim |\mathsf{sh}(\mathcal T)|$. Now it is clear that we can replace $\mathcal T$ with the dyadic collection $\widetilde{\mathcal T}$ in the assumption.  Furthermore there is no loss in generality with assuming that $\mathcal T$ is a pairwise incomparable collection.  We do so in the rest of the proof and continue using the notation $\mathcal T$ assuming it is a dyadic collection.
 
Since $\mathcal R_{\tiles}$ is subordinate to $\mathcal T$ we have the decomposition
\[
\tiles=\bigcup_{T\in\mathcal T}\tiles(T), \qquad \tiles(T)\coloneqq \{t\in \tiles:\, R_t\subset  T\}.
\]

Now if $f$ is supported on $\mathsf{sh}^\star (\mathcal T)$ and $\upphi_t \in \mathcal A_t ^{M_0}$ for each $t\in\tiles$ then
\[
\sum_{t \in \tiles} |\l f,\upphi_t\r|^2 \lesssim \|f\|_2^2\leq |\mathsf{sh}^\star(\mathcal T)|\|f\|_\infty^2\lesssim  |\mathsf{sh}(\mathcal T)|\|f\|_\infty^2
\]
by Lemma \ref{l:globalortho}. We may thus assume that $f$ is supported outside $\mathsf{sh}^\star(\mathcal T) $. 
By Lemma \ref{l:journe} it then suffices to prove that
\[
\sum_{t\in \tiles(T)} |\l f, \upphi_t\r|^2 \lesssim 2^{-10u} |T| 
\]
whenever $u$ is the least integer such that $2^{u}T\not\subset \mathsf{sh}^\star(\mathcal T)$ and $\|f\|_\infty=1$. As $f$ is supported off $\mathsf{sh}^\star(\mathcal T)$ we have have for this choice of $u$ that
\[
f= \sum_{n \geq 0 } f_n, \qquad f_n\coloneqq f\cic{1}_{2^{u+n}T\setminus 2^{u+n-1} T}.
\] 
Let $z_T$ be the center of $T$ and suppose that $T=A_s(I_T\times J_T)$ with $I_T\times J_T \in \mathcal D_0 ^2$; remember that we write $v_s\coloneqq (1,s)$. Let 
\begin{equation}\label{eq:chiT}
\chi_T(x)\coloneqq\Big(1+ \frac{(x-z_T)\cdot v_s}{|I_T||v_s|}\Big)^{-20}(1+|J_T|^{-1} (x-z_T)\cdot e_2)^{-20}.
\end{equation}
Observe preliminarily that
\[
\|f_n\chi_T\|_\infty\lesssim 2^{-20 (u+n) }
\]
so that for any constant $c>0$ we have
\[
\begin{split}
\bigg(\sum_{t\in \tiles(T)} |\l f, \upphi_t\r|^2\bigg)^{\frac12} &\leq \sum_{n \geq 0} \bigg(\sum_{t\in \tiles(T)} |\l f_n, \upphi_t\r|^2\bigg)^{\frac12}= \sum_{n\geq 0}\bigg(\sum_{t\in\tiles(T)} |\l  f_nc^{-1}\chi_T, c\chi_T^{-1}\upphi_t\r|^2\bigg)^{\frac12}
\\ 
& \lesssim  \sum_{n\geq 0} \|f_n\chi_T\|_2 \lesssim \sum_{n\geq 0} \|f_n\chi_T\|_\infty |2^{u+n}T|^{\frac12} \lesssim 2^{-5u}|T|^{\frac12}
\end{split}
\]
as claimed. To pass to the second line we have used estimate \eqref{e:ttstarlemma} of Lemma \ref{l:globalortho} together with the easily verifiable fact that for each $ t\in \mathcal \tiles(T) $ the wave-packet $c\chi_T^{-1}\upphi_t$ is adapted to $t$ with order $M_0-20\geq 2^3$ provided the absolute constant $c$ is chosen small enough.
\end{proof}
%%%%%%%%%%%%%%%%%%%%%%%%%%%%%% PROOF PROOF PROOF

%%%%%%%%%%%%%%%%%%%%%%%%%%%%%% SECTION SECTION SECTION 
\subsection{The intrinsic square function associated with rough frequency cones}\label{sec:conetiles} Let $s\in  S$ be our finite set of slopes. As usual we write $v_s\coloneqq(1,s)$ for $s\in S$ and $V\coloneqq\{v_s:\, s\in S\}$ and switch between the description of directions as slopes or vectors as desired with no particular mention. Now assume we are given a finitely overlapping collection of arcs $\{\upomega_s\}_{s\in S}$ with each $\upomega_s \subset\mathbb S^1$ centered at $(v_s/|v_s|)^\perp$. We will adopt the notation $\upomega_s \eqqcolon ((v_{s ^-}/|v_{s ^-}|)^\perp,(v_{s ^+}/|v_{s ^+}|)^\perp)$ assuming that the positive direction on the circle is counterclockwise and $s^-<s<s^+$.

For $s\in S$ we define the conical sectors
\begin{equation}
\label{e:freqsconical}
\Omega_{s,k}\coloneqq \left\{\xi \in \R^2:\,  2^{k-1} < |\xi| < 2^{k+1}, \, \frac{\xi}{|\xi|} \in \upomega_s \right\}, \qquad k \in \mathbb Z; 
\end{equation}
these are an overlapping cover of the cone
\[
C_s\coloneqq \left\{\xi\in \R^2\setminus\{0\}:\, \frac{\xi}{|\xi|} \in \upomega_s\right\}
\]
with   $k\in \mathbb Z$ playing the role of the annular parameter. Each sector $\Omega_{s,k}$ is strictly contained in the cone $C_s$.

For each $s\in S$ let $\ell_s\in \mathbb Z$ be chosen such that $2^{-\ell_s}<|\upomega_s|\leq 2^{-\ell_s+1}$. We perform a further discretization of each conical sector $\Omega_{s,k}$ by considering Whitney-type decompositions with respect to the distance to the lines determined by the boundary rays $r_{s^-}$ and $r_{s^+}$; here $r_{s^+}$ denotes the ray emanating from the origin in the direction of $v_{s^+} ^\perp$ and similarly for $r_{s^-}$. For each sector $\Omega_{s,k}$ a central piece which we call $\Omega_{s,k,0}$ is left uncovered by these Whitney decompositions. This is merely a technical issue and we will treat these central pieces separately in what follows.

To make this precise let $s,k$ be fixed and define the regions
\begin{equation}
\begin{split}\label{eq:whitney}
\Omega_{s,k,m}\coloneqq\Big\{\xi\in\Omega_{s,k}:\, \frac13 2^{-|m|-1} \leq \frac{\mathrm{dist}(\xi,r_{s^+})}{|\upomega_s|} \leq \frac13 2^{-|m|+1} \Big\},\qquad m>0,
\\
\Omega_{s,k,m}\coloneqq\Big\{\xi\in\Omega_{s,k}:\,  \frac13 2^{-|m|-1} \leq \frac{\mathrm{dist}(\xi,r_{s^-}) }{|\upomega_s|} \leq \frac13 2^{-|m|+1} \Big\},\qquad m<0.
\end{split}
\end{equation}
The central part that was left uncovered corresponds to $m=0$ and is described as
\begin{equation}\label{eq:whitney0}
\Omega_{s,k,0}\coloneqq \Big\{\xi\in\Omega_{s,k}:\,    \min(\mathrm{dist}(\xi,r_{s^-}),\mathrm{dist}(\xi,r_{s^+}))  \geq \frac12 \frac13	 |\upomega_s|\Big\}.
\end{equation}
Notice that the collection $\{\Omega_{s,k,m}\}_{m\in\N}$ is a finitely overlapping cover of $\Omega_{s,k}$. Furthermore the family $\{\Omega_{s,k,m}\}_{s,k,m}$ has finite overlap as the cones $\{C_s\}_{s\in S}$ have finite overlap and for fixed $s$ the family $\{\Omega_{s,k,m}\}_{k,m}$ is Whitney both in $k$ and $m$.

These geometric considerations are depicted in Figure~\ref{fig:conetiles} below.
 
%%%%%%%%%%%%%%%%%%%%%%%%%%%%%%% FIGURE FIGURE FIGURE
\begin{figure}[ht]
\centering
\def\svgwidth{330pt}
%% Creator: Inkscape 1.0beta2 (2b71d25, 2019-12-03), www.inkscape.org
%% PDF/EPS/PS + LaTeX output extension by Johan Engelen, 2010
%% Accompanies image file '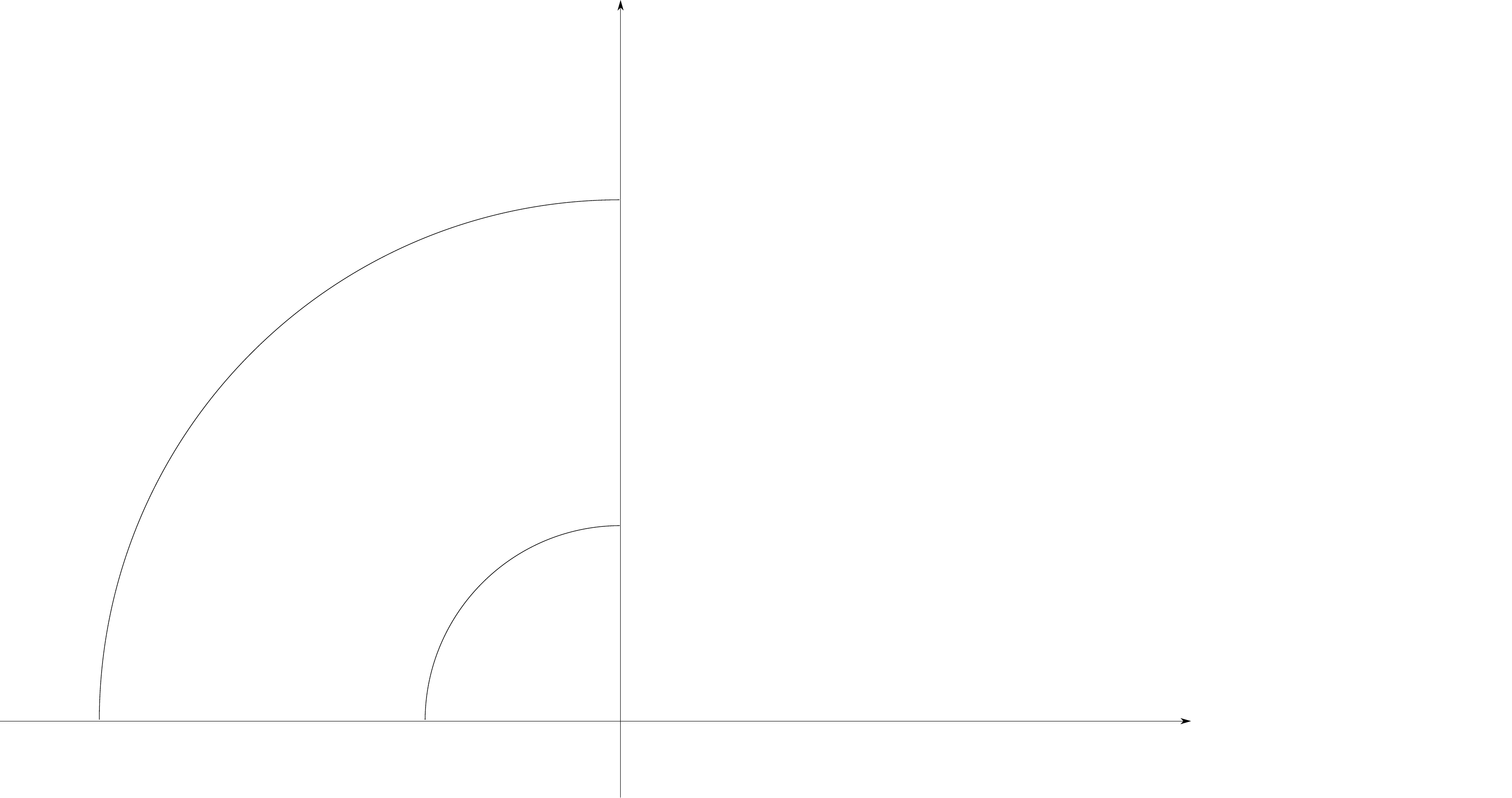' (pdf, eps, ps)
%%
%% To include the image in your LaTeX document, write
%%   \input{<filename>.pdf_tex}
%%  instead of
%%   \includegraphics{<filename>.pdf}
%% To scale the image, write
%%   \def\svgwidth{<desired width>}
%%   \input{<filename>.pdf_tex}
%%  instead of
%%   \includegraphics[width=<desired width>]{<filename>.pdf}
%%
%% Images with a different path to the parent latex file can
%% be accessed with the `import' package (which may need to be
%% installed) using
%%   \usepackage{import}
%% in the preamble, and then including the image with
%%   \import{<path to file>}{<filename>.pdf_tex}
%% Alternatively, one can specify
%%   \graphicspath{{<path to file>/}}
%% 
%% For more information, please see info/svg-inkscape on CTAN:
%%   http://tug.ctan.org/tex-archive/info/svg-inkscape
%%
\begingroup%
  \makeatletter%
  \providecommand\color[2][]{%
    \errmessage{(Inkscape) Color is used for the text in Inkscape, but the package 'color.sty' is not loaded}%
    \renewcommand\color[2][]{}%
  }%
  \providecommand\transparent[1]{%
    \errmessage{(Inkscape) Transparency is used (non-zero) for the text in Inkscape, but the package 'transparent.sty' is not loaded}%
    \renewcommand\transparent[1]{}%
  }%
  \providecommand\rotatebox[2]{#2}%
  \newcommand*\fsize{\dimexpr\f@size pt\relax}%
  \newcommand*\lineheight[1]{\fontsize{\fsize}{#1\fsize}\selectfont}%
  \ifx\svgwidth\undefined%
    \setlength{\unitlength}{955.56878988bp}%
    \ifx\svgscale\undefined%
      \relax%
    \else%
      \setlength{\unitlength}{\unitlength * \real{\svgscale}}%
    \fi%
  \else%
    \setlength{\unitlength}{\svgwidth}%
  \fi%
  \global\let\svgwidth\undefined%
  \global\let\svgscale\undefined%
  \makeatother%
  \begin{picture}(1,0.52752546)%
    \lineheight{1}%
    \setlength\tabcolsep{0pt}%
    \put(0,0){\includegraphics[width=\unitlength,page=1]{conetiles.pdf}}%
    \put(0.73352601,0.39585592){\color[rgb]{0,0,0}\makebox(0,0)[lt]{\lineheight{0}\smash{\begin{tabular}[t]{l}$\scriptstyle{v_{s} =(1,s)}$\end{tabular}}}}%
    \put(0,0){\includegraphics[width=\unitlength,page=2]{conetiles.pdf}}%
    \put(0.65892369,0.09870227){\color[rgb]{0,0,0}\makebox(0,0)[lt]{\lineheight{0}\smash{\begin{tabular}[t]{l}$\scriptstyle{R_t \text{ dual to }  \Omega_{s,k,0}}$\end{tabular}}}}%
    \put(0.009892,0.02296516){\color[rgb]{0,0,0}\makebox(0,0)[lt]{\lineheight{0}\smash{\begin{tabular}[t]{l}$\scriptstyle{|\xi|=2^{k+1}}$\end{tabular}}}}%
    \put(0.24263737,0.02400128){\color[rgb]{0,0,0}\makebox(0,0)[lt]{\lineheight{0}\smash{\begin{tabular}[t]{l}$\scriptstyle{|\xi|=2^{k-1}}$\end{tabular}}}}%
    \put(0,0){\includegraphics[width=\unitlength,page=3]{conetiles.pdf}}%
    \put(0.20754137,0.39357333){\color[rgb]{0,0,0}\makebox(0,0)[lt]{\lineheight{0}\smash{\begin{tabular}[t]{l}$\scriptstyle{2^{k+1}|\omega_s|\eqsim 2^{k-\ell_s}}$\end{tabular}}}}%
    \put(0,0){\includegraphics[width=\unitlength,page=4]{conetiles.pdf}}%
    \put(0.32090948,0.33414535){\color[rgb]{0,0,0}\makebox(0,0)[lt]{\lineheight{0}\smash{\begin{tabular}[t]{l}$\scriptstyle{\Omega_{s,k}}$\end{tabular}}}}%
    \put(0.0093418,0.22583676){\color[rgb]{0,0,0}\makebox(0,0)[lt]{\lineheight{0}\smash{\begin{tabular}[t]{l}$\scriptstyle{r_{s^+}}$\end{tabular}}}}%
    \put(0.13727607,0.38045671){\color[rgb]{0,0,0}\makebox(0,0)[lt]{\lineheight{0}\smash{\begin{tabular}[t]{l}$\scriptstyle{r_{s^-}}$\end{tabular}}}}%
    \put(0,0){\includegraphics[width=\unitlength,page=5]{conetiles.pdf}}%
    \put(0.32247923,0.27371016){\color[rgb]{0,0,0}\makebox(0,0)[lt]{\lineheight{0}\smash{\begin{tabular}[t]{l}$\scriptstyle{\Omega_{s,k,0}}$\end{tabular}}}}%
    \put(0.02795961,0.3315946){\color[rgb]{0,0,0}\makebox(0,0)[lt]{\lineheight{0}\smash{\begin{tabular}[t]{l}$\scriptstyle{ v_{s}  ^\perp=(-s,1)}$\end{tabular}}}}%
    \put(0,0){\includegraphics[width=\unitlength,page=6]{conetiles.pdf}}%
    \put(0.07949608,0.10867378){\color[rgb]{0,0,0}\makebox(0,0)[lt]{\lineheight{0}\smash{\begin{tabular}[t]{l}$\scriptstyle{\Omega_{s,k,m},\,} \scriptscriptstyle{m}>0$\end{tabular}}}}%
    \put(0,0){\includegraphics[width=\unitlength,page=7]{conetiles.pdf}}%
  \end{picture}%
\endgroup%
\caption{The decomposition of the sector $\Omega_{s,k}$ into Whitney regions, and the spatial grid corresponding to the middle region $\Omega_{s,k,0}$.}
\label{fig:conetiles}
\end{figure}
%%%%%%%%%%%%%%%%%%%%%%%%%%%%%%% FIGURE FIGURE FIGURE

The collection of tiles $\tiles$ corresponding to this decomposition is obtained as
\[
\tiles\coloneqq  \bigcup_{s\in S} \tiles_{s} ^-  \cup \tiles^0 _s \cup \tiles_{s} ^+
	\]
where 
\begin{alignat}{2}\label{eq:tilesskm}
\notag  \qquad & \tiles_{s} ^- \coloneqq\bigcup_{k\in\Z,\,m<0}\tiles_{s^-,k,m},\qquad  &&\tiles _{s^-,k,m}  \coloneqq \big\{t=R_t  \times \Omega_{s,k,m}: \,  R_t  \in \mathcal D_{s^-,k,k-\ell_{s}+|m|}\big\}, \quad m<0,
 \\
 \qquad & \tiles_{s} ^0 \coloneqq\bigcup_{k\in\Z}\tiles_{s,k,0}, \qquad  &&\tiles _{s,k,0}   \coloneqq \big\{t=R_t  \times \Omega_{s,k,0}: \,  R_t  \in \mathcal D_{s,k,k-\ell_{s} }\big\}, 
 \\
 \qquad & \tiles_{s} ^+\coloneqq\bigcup_{k\in\Z,\,m>0}\tiles_{s^+,k,m}, \qquad &&\tiles _{s^+,k,m} \coloneqq \big\{t=R_t  \times \Omega_{s,k,m}: \,  R_t  \in \mathcal D_{s^+,k,k-\ell_{s}+|m|}\big\}, \quad m>0.
\end{alignat}
We stress here that for each cone $C_s$ we introduce tiles in three possible directions $v_{{s^-}},v_s,v_{s^+}$. This turns out to be technical nuisance more than anything else as the total number of directions is still comparable to $\#S$, and our estimates will be uniform over all $S$ with the same cardinality. However in order  to avoid confusion we set
\begin{equation}\label{eq:s*}
S^*\coloneqq S\cup\{s^-:\, s\in S\}\cup\{s^+:\, s\in S\}\eqqcolon S^-\cup S\cup S^+.
\end{equation}
Note also that for fixed $s,k,m$ the choice of scales for $R_t$ yields that the tile $t=R_t \times  \Omega_{s,k,m}$  obeys the uncertainty principle in both radial and tangential directions. 

We then define the associated  intrinsic square function by
\begin{equation}
\label{e:intrinsicsf}
\begin{split}
&\Delta_{\tiles} (f) \coloneqq \Bigg( \sum_{t \in \mathbf T}a_{t}(f) \frac{\ind_{R_t}}{|R_t|} \Bigg)^\frac12,
\end{split}
\end{equation} 
where   the set of slopes $S$ are kept implicit in the notation. Here we remember that the notation $a_t(f)$ was introduced in \eqref{e:intwcdef}. Using the orthogonality estimates of \S\ref{sec:orthocone} as input for Theorem \ref{thm:carleson} we readily obtain the estimates of the following theorem. 

%%%%%%%%%%%%%%%%%%%%%%%%%%%%%% THEOREM THEOREM THEOREM
\begin{theorem} \label{t:isf} We have the estimates
\begin{align}\label{e:isfp}
& \big\|\Delta_{\tiles}: L^{p}(\R^2)   \big\|  \lesssim_p (\log \#S)^{\frac12-\frac1p} (\log\log\#S)^{\frac12-\frac1p}, \qquad 2\leq p<4, \\
& \sup_{E,f}
\label{e:isfrest} \frac{\left\|\Delta_{\tiles} (f\cic{1}_E)\right\|_{4}}{|E|^{\frac14}} \lesssim (\log \#S)^{\frac14} (\log\log\#S)^{\frac14},
\end{align}
where the supremum in the last display is taken over all measurable sets $E\subset \R^2$ of finite positive measure and all Schwartz functions $f$ on $\R^2$ with $\|f\|_\infty\leq 1$.
\end{theorem}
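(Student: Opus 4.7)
The plan is to obtain the restricted-type estimate \eqref{e:isfrest} as an immediate consequence of Theorem \ref{thm:carleson} applied to the sequence of intrinsic wavelet coefficients $\{a_t(f)\}_{t\in\tiles}$, and then to derive the $L^p$ bound \eqref{e:isfp} by interpolation with the orthogonality estimate of Lemma \ref{l:globalortho}.

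The first step is a $TT^*$-style identity. Setting
\[
a_R \coloneqq \sum_{t \in \tiles:\, R_t = R} a_t(f), \qquad \mathcal R \coloneqq \{R_t : t \in \tiles\} \subset \mathcal D_{S^*}^2,
\]
and expanding the fourth power of the square function gives
\[
\|\Delta_\tiles(f)\|_4^4 = \bigg\|\sum_{R \in \mathcal R} a_R \frac{\ind_R}{|R|}\bigg\|_2^2 = \mathsf{mass}_{a,2}(\mathcal R)^2.
\]
To apply Theorem \ref{thm:carleson} to this quantity, I need to verify that $\{a_R/(C\|f\|_\infty^2)\}$ is an $L^\infty$-normalized Carleson sequence for the constant $C$ implicit in Lemma \ref{l:localortho}. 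Take any $\mathcal L \subset \mathcal D_{S^*}^2$ subordinate to some $\mathcal T \subset \mathcal P_\tau^2$ with $\tau \in S^*$; setting $\tiles' \coloneqq \{t \in \tiles: R_t \in \mathcal L\}$, the rectangle family $\mathcal R_{\tiles'} = \mathcal L$ is subordinate to $\mathcal T$, whence Lemma \ref{l:localortho} yields $\sum_{R \in \mathcal L} a_R = \sum_{t \in \tiles'} a_t(f) \leq C|\mathsf{sh}(\mathcal T)|\|f\|_\infty^2$. This is the Carleson condition on the normalized sequence.

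Next, I would apply Theorem \ref{thm:carleson}. For each slope $s \in S^*$, the collection $\mathcal R_s$ is two-parameter, so $\M_{\mathcal R_s}$ is a sheared copy of the strong maximal function; consequently the hypothesis $\|\M_{\mathcal R_s}: L^p \to L^{p, \infty}\| \lesssim p'$ of Theorem~\ref{thm:carleson} holds with $\upgamma = 1$. Since $\#S^* \leq 3\#S$, applying Theorem \ref{thm:carleson} to the normalized sequence and reverting the scaling produces
\[
\mathsf{mass}_{a,2}(\mathcal R) \lesssim (\log \#S)^{1/2}(\log \log \#S)^{1/2}\, \|f\|_\infty \, \mathsf{mass}_{a,1}(\mathcal R)^{1/2}.
\]
The global orthogonality estimate of Lemma \ref{l:globalortho} (using the finite overlap of the frequency supports $\Omega_{s,k,m}$) yields $\mathsf{mass}_{a,1}(\mathcal R) = \sum_t a_t(f) \lesssim \|f\|_2^2$. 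Specializing to $f \leftarrow f\ind_E$ with $\|f\|_\infty \leq 1$ and combining these two bounds gives exactly \eqref{e:isfrest}.

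For \eqref{e:isfp}, Lemma \ref{l:globalortho} also delivers the endpoint $\|\Delta_\tiles(f)\|_2 \lesssim \|f\|_2$. Sublinearity of $\Delta_\tiles$ in $f$ follows from the subadditivity of $a_t(\cdot)^{1/2} = \sup_{\upphi \in \mathcal A_t^{M_0}}|\langle \cdot, \upphi\rangle|$, and Marcinkiewicz interpolation between the strong type $(2,2)$ bound with constant $1$ and the restricted strong type $(4,4)$ bound with constant $(\log \#S)^{1/4}(\log \log \#S)^{1/4}$ produces strong type $(p,p)$ with constant $(\log \#S)^{\frac12 - \frac1p}(\log \log \#S)^{\frac12 - \frac1p}$ for $2 < p < 4$, as claimed. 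The principal obstacle I anticipate is the Carleson verification step, which rests on Lemma \ref{l:localortho}; that lemma in turn relies on Journé's lemma in sheared product geometry and on the strong spatial decay of the adapted wave packets $\upphi_t \in \mathcal A_t^{M_0}$. Once those inputs are secured, the directional Carleson embedding of Theorem \ref{thm:carleson} does all the remaining work.
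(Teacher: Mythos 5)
Your proof is correct and follows essentially the same route as the paper: verify via Lemma~\ref{l:localortho} that the aggregated coefficients $a_R=\sum_{t:R_t=R}a_t(f)$ form (after $\|f\|_\infty^2$-normalization) a directional Carleson sequence over $\mathcal D_{S^*}^2$, feed this into Theorem~\ref{thm:carleson} with $\upgamma=1$ and use Lemma~\ref{l:globalortho} to control $\mathsf{mass}_{a,1}$, then interpolate the restricted $p=4$ estimate with the trivial $L^2$ bound. The only cosmetic difference is that you carry $\|f\|_\infty$ through a general estimate before specializing to $f\ind_E$, while the paper substitutes $f\ind_E$ from the outset; and your phrase ``restricted strong type $(4,4)$'' is in fact the more accurate description of the $p=4$ endpoint than the paper's ``restricted weak type interpolation,'' since the left-hand side of \eqref{e:isfrest} is in $L^4$, not $L^{4,\infty}$.
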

%%%%%%%%%%%%%%%%%%%%%%%%%%%%%% THEOREM THEOREM THEOREM

%%%%%%%%%%%%%%%%%%%%%%%%%%%%%% PROOF PROOF PROOF
\begin{proof}[Proof of Theorem~\ref{t:isf}] First of all, observe that the case $p=2$ of \eqref{e:isfp} is exactly the conclusion of Lemma \ref{l:globalortho}. By restricted weak type interpolation it thus suffices to prove \eqref{e:isfrest} to obtain the remaining cases of \eqref{e:isfp}: we turn to the former task.

For convenience define $S^*\coloneqq S\cup\{s^-:\, s\in S\}\cup\{s^+:\, s\in S\}\eqqcolon S^-\cup S\cup S^+$; note that this is the actual set of slopes of tiles in $\tiles$. Let
\[
 \mathcal R_{\tiles}\coloneqq\{R_t:\,t\in\tiles\} \subset  \mathcal D_{S^*} ^2.
\]
Observe that we can write
\[
\Delta_{\tiles}(f\ind_E)^2 =\sum_{R\in \mathcal R_{\tiles}}\bigg(\sum_{t\in\tiles:\, R_t=R}a_t(f\ind_E)\bigg)\frac{\ind_{R }}{|R |}\eqqcolon \sum_{R\in \mathcal R_{\tiles}}a_R\frac{\ind_R}{|R|}
\]
where
\[
a\coloneqq\Big\{a_R= \sum_{t\in\tiles:\, R_t=R}a_t(f\ind_E): \, R \in \mathcal R_{\tiles}\Big\}.
\] 
We fix $E$ and $f$ as in the statement and we will obtain \eqref{e:isfrest} from an application of Theorem \ref{thm:carleson} to the Carleson sequence $a=\{a_R\}_{R \in \mathcal R_{\tiles}}$.

First, $\mathsf{mass}_a\lesssim |E|$ as a consequence of Lemma \ref{l:globalortho} since
\[
\sum_{R \in \mathcal R_{\tiles}}a_R 
 =\sum_{R \in \mathcal R_{\tiles}}\sum_{t\in\tiles:\, R_t=R}a_t(f\ind_E)=\sum_{t\in\tiles} a_t(f\ind_E)\lesssim\|f\ind_E\|_2 ^2\lesssim |E|.
\]

Further, the fact that $a$ is (a constant multiple of) an $L^\infty$-normalized Carleson sequence is a consequence of the  localized estimate of Lemma \ref{l:localortho}. To verify this we need to check the validity of Definition~\ref{def:carleson} for the sequence $a$ above. To that end let $\mathcal L\subset  \mathcal D_{S^*} ^2$ be a collection of parallelograms which is subordinate to $\mathcal T\subset  \mathcal D_\sigma ^2$ for some fixed $\sigma\in S^*$. Then
\[
\sum_{R\in\mathcal L} a_R = \sum_{R\in\mathcal L} \sum_{t\in\tiles:\, R_t=R}a_t(f\ind_E)=\sum_{t\in\tiles _{\mathcal L}} a_t(f\ind_E)
\]
where $\tiles _{\mathcal L}\coloneqq\{t\in \tiles:\, R_t\in\mathcal L\}.$ By Lemma~\ref{l:localortho} the right hand side of the display above can be estimated by a constant multiple of $|\mathsf{sh}(\mathcal T)| \|f\ind_E\|_{\infty} ^2\leq  |\mathsf{sh}(\mathcal T)|$. This shows the desired property in the definition of a Carleson sequence.

Finally if $\tiles_{\sigma}\coloneqq \{t\in\tiles:\, s(t)=\sigma\}$  for $\sigma\in S^*$ we have that
\[
\sup_{\sigma\in S^*}\big\| \M_{\mathcal R_{\tiles_{\sigma}}}: \, L^p(\R^2)\to L^{p,\infty}(\R^2)\big\|\lesssim p',\qquad p\to 1^+.
\] 
Indeed note that for fixed direction $\sigma\in S^*$ each maximal operator appearing in the estimate above is bounded by the strong maximal function in the coordinates $(v,e_2)$ with $v=(1,\sigma)$.

Now Theorem \ref{thm:carleson} applies  to the Carleson sequence $a=\{a_R\}_{R\in\mathcal R_{\tiles}}$ yielding
\[
 \|\Delta_{\tiles} (f\cic{1}_E)\|_4^4 = \|T_{R_{\tiles}}(a)\|_2^2 \lesssim (\log \#S^*)(\log\log \#S^*) \mathsf{mass}_a\lesssim   (\log \#S)(\log\log \#S) |E|
\]
which is the claimed estimate \eqref{e:isfrest} as $\#S^*\simeq \#S$. The proof of Theorem~\ref{t:isf} is thus complete.
\end{proof}
%%%%%%%%%%%%%%%%%%%%%%%%%%%%%% PROOF PROOF PROOF

%%%%%%%%%%%%%%%%%%%%%%%%%%%%%% SECTION SECTION SECTION 
\subsection{The intrinsic square function associated with smooth frequency cones}\label{sec:smoothtiles} The tiles in the previous subsection were used to model rough frequency projections on a collection of essentially disjoint cones. Indeed note that all decompositions were of Whitney type with respect to all the singular sets of the corresponding rough multiplier. In the case of smooth frequency projections on cones we need a simplified collection of tiles that we briefly describe below.

Assuming $S$ is a finite set of slopes and the arcs $\{\upomega_s\}_{s\in S}$ on $\mathbb S^1$ have finite overlap as before we now define for $s\in S$ and $k\in\Z$ the collections
\begin{equation}\label{eq:smoothconetiles}
\tiles_{s,k}\coloneqq\big\{t=R_t\times \Omega_{s,k}:\,R_t\in\mathcal D_{s,k-\ell_s,k} \big\},\qquad \tiles_s\coloneqq\bigcup_{k\in\Z} \tiles_{s,k},\qquad \tiles\coloneqq \bigcup_{s\in S} \tiles_s,
\end{equation}
with $\Omega_{s,k}$ given by \eqref{e:freqsconical}. Here we also assume that $2^{-\ell_s}\leq |\upomega_s|\leq 2^{-\ell_s+1}$. Notice that each conical sector $\Omega_{s,k}$ now generates exactly one frequency component of possible tiles in contrast with the previous subsection where we need a whole Whitney collection for every $s$ and every $k$; in fact the tiles $\tiles_{s,k}$ are for all practical purposes the same as the tiles $\tiles_{s,k,0}$ considered in \S\ref{sec:conetiles}. It is of some importance to note here that for each fixed $s\in S$ the collection $\mathcal R_{\tiles}\coloneqq\{R_t:\, t\in\tiles\}$ consists of parallelograms of fixed eccentricity $2^{\ell_s}$ and thus the corresponding maximal operator $\mathrm{M}_{\mathcal R_{\tiles_s}}$ is of weak-type (1,1) uniformly in $s\in S$:
\[
\sup_{s\in S}\big\| \M_{\mathcal R_{\tiles_s}}: \, L^1(\R^2)\to L^{1,\infty}(\R^2)\big\|\lesssim 1.
\] 
The intrinsic square function $\Delta_\tiles$ is formally given as in \eqref{e:intrinsicsf} but defined with respect to the new collection of tiles defined in \eqref{eq:smoothconetiles}. A repetition of the arguments that led to the proof of Theorem~\ref{t:isf} yield the following.

%%%%%%%%%%%%%%%%%%%%%%%%%%%%%% THEOREM THEOREM THEOREM
\begin{theorem} \label{thm:smoothconeintrinsic} For $\tiles$ defined by \eqref{eq:smoothconetiles} we have the estimates
\[
\begin{split}
& \big\|\Delta_{\tiles}: L^{p}(\R^2)   \big\|  \lesssim_p (\log \#S)^{\frac12-\frac1p}, \qquad 2\leq p<4, \\
& \sup_{E,f} \frac{\left\|\Delta_{\tiles} (f\cic{1}_E)\right\|_{4}}{|E|^{\frac14}} \lesssim (\log \#S)^{\frac14},
\end{split}
\]
where the supremum in the last display is taken over all measurable sets $E\subset \R^2$ of finite positive measure and all Schwartz functions $f$ on $\R^2$ with $\|f\|_\infty\leq 1$.
\end{theorem}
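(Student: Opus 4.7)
The strategy mirrors that of Theorem \ref{t:isf}, with the essential simplification that, since the rectangles $\{R_t : t \in \tiles_s\}$ have fixed eccentricity $2^{\ell_s}$, the maximal operator $\M_{\mathcal R_{\tiles_s}}$ is bounded from $L^1$ to $L^{1,\infty}$ with constant independent of $s\in S$. In the language of Theorem \ref{thm:carleson}, this means we may apply the abstract embedding with exponent $\upgamma=0$, which removes the $(\log\log)$ factor that appears in \eqref{e:isfp}--\eqref{e:isfrest}.

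The $p=2$ case of the strong-type estimate follows directly from the global orthogonality statement of Lemma \ref{l:globalortho}, using that the frequency components $\{\Omega_{s,k}\}_{s\in S, k\in \Z}$ have bounded overlap (since the arcs $\{\upomega_s\}$ have bounded overlap and the annular parameter $k$ is Whitney). By restricted weak-type interpolation between $p=2$ and the endpoint $p=4$, it then suffices to establish the restricted-type bound for $p=4$, i.e.\ the second displayed inequality.

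To prove the $p=4$ restricted-type estimate, fix $E\subset \R^2$ of finite positive measure and $f$ with $\|f\|_\infty\leq 1$, and define
\[
a_R \coloneqq \sum_{t\in \tiles : R_t = R} a_t(f\cic{1}_E), \qquad R \in \mathcal R_{\tiles} \subset \mathcal D_S^2,
\]
so that $\Delta_\tiles(f\cic{1}_E)^2 = \sum_R a_R \ind_R/|R|$, which is a constant multiple of the balayage $T_{\mathcal R_{\tiles}}(a)$. One verifies that $a$ is an $L^\infty$-normalized Carleson sequence in the sense of Definition \ref{def:carleson}: if $\mathcal L \subset \mathcal R_{\tiles}$ is subordinate to some $\mathcal T \subset \mathcal P_\sigma^2$ for fixed $\sigma\in S$, then grouping $a_R$-terms by tiles and applying Lemma \ref{l:localortho} to the subfamily $\tiles_\mathcal L = \{t\in\tiles : R_t \in \mathcal L\}$ (whose spatial components are subordinate to $\mathcal T$) yields $\sum_{R\in\mathcal L}a_R \lesssim |\mathsf{sh}(\mathcal T)| \|f\cic{1}_E\|_\infty^2 \leq |\mathsf{sh}(\mathcal T)|$. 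Similarly $\mathsf{mass}_a \lesssim |E|$ via Lemma \ref{l:globalortho}.

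With the Carleson condition in place, and since $\sup_{s\in S}\|\M_{\mathcal R_{\tiles_s}} : L^1 \to L^{1,\infty}\|\lesssim 1$ by the fixed-eccentricity observation above, Theorem \ref{thm:carleson} applied with $\upgamma=0$ delivers
\[
\|\Delta_\tiles(f\cic{1}_E)\|_4^4 \;=\; \|T_{\mathcal R_{\tiles}}(a)\|_2^2 \;\lesssim\; (\log \#S)\, \mathsf{mass}_{a,1}(\mathcal R_{\tiles}) \;\lesssim\; (\log \#S)\,|E|,
\]
which is precisely the restricted-type bound. The only step requiring care is the verification of the Carleson condition, whose control of spatial tails through Journ\'e's lemma (Lemma \ref{l:journe}) is already subsumed in Lemma \ref{l:localortho}; the rest is routine bookkeeping exactly as in the proof of Theorem \ref{t:isf}.
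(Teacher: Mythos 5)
Your proposal is correct and follows precisely the route the paper takes: the paper states Theorem \ref{thm:smoothconeintrinsic} after noting that the fixed eccentricity of $\mathcal R_{\tiles_s}$ yields uniform weak $(1,1)$ bounds for $\M_{\mathcal R_{\tiles_s}}$, and then observes that "a repetition of the arguments that led to the proof of Theorem \ref{t:isf}" completes the proof; your write-up supplies exactly that repetition, with the key observation (correctly identified) that the $\upgamma=0$ case of Theorem \ref{thm:carleson} eliminates the $\log\log$ factor.
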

%%%%%%%%%%%%%%%%%%%%%%%%%%%%%% THEOREM THEOREM THEOREM

%%%%%%%%%%%%%%%%%%%%%%%%%%%%%% SECTION SECTION SECTION
\subsection{The intrinsic square function associated with rough frequency rectangles}\label{sec:intrinsicrdf}
The considerations in this subsection aim at providing the appropriate time-frequency analysis in order to deal with a Rubio de Francia type square function, given by frequency projections on disjoint rectangles in finitely many directions. The intrinsic setup is described by considering again a finite set of slopes $S$ and corresponding directions $V$. Suppose that we are given a finitely overlapping collection of rectangles $\mathcal F=\cup_{s\in S}\mathcal F_s$, consisting of rectangles which are tensor products of intervals in the coordinates $v,v^\perp$, $v=(1,s)$, for some $s\in S$. Namely a rectangle $F\in \mathcal F_s$ is a \emph{rotation} by $s$ of an axis-parallel rectangle. We stress that the rectangles in each collection $\mathcal F_s$ are generic two-parameter rectangles, namely their sides have independent lengths (there is no restriction on their eccentricity).

We also note that $\mathcal F_s$ consists of rectangles rather than parallelograms and this difference is important when one deals with rough frequency projections. Our techniques are sufficient to deal with the case of parallelograms as well but we just choose to detail the setup for the rectangular case. The interested reader will have no trouble adjusting the proof for variations of our main statement below for the case of parallelograms, or for the case that the families $\mathcal F_s$ are in fact one-parameter families.
 
Given $F\in\mathcal F_s$ we define a two-parameter Whitney discretization as follows. Let $F=\mathrm{rot}_s(I\times J)+y_F$ for some $y_F\in \R^2$, where $\mathrm{rot_s}$ denotes counterclockwise rotation by $s$ about the origin and $I\times J$ is an axis parallel rectangle centered at the origin. Note that $I=(-|I|/2,|I|/2)$ and similarly for $J$. Then we define for $(k_1,k_2)\in\N^2$, $k_1,k_2\neq 0$,
\[
W_{k_1,k_2}(F)\coloneqq \Big\{\xi\in I\times J:\,  \frac13 2^{-k_1-1} \leq \frac12- \frac{|\xi_1|}{|I|}\leq \frac 13 2^{-k_1+1} ,\, \frac13 2^{-k_2-1} \leq \frac12- \frac{|\xi_2|}{|J|}\leq \frac 13 2^{-k_2+1}   \Big\}.
\]
The definition has to be adjusted for $k_1=0$ or $k_2=0$. For example we define for $k_2\neq 0$
\[
W_{0,k_2}(F)\coloneqq \Big\{\xi\in I\times J:\,  |I|/2-|\xi_1|\geq  \frac12 \frac13|I|,\, \frac13 2^{-k_2-1}|J| \leq |J|/2- |\xi_2|\leq \frac 13 2^{-k_2+1}  |J|\Big\}
\]
and symmetrically for $k_1\neq 0$ and $k_2=0$. Finally
\[
W_{0,0}(F)\coloneqq \Big\{\xi\in I\times J:\,|I|/2-|\xi_1|\geq  \frac12 \frac13|I|,\, |J|/2-|\xi_2|\geq  \frac12 \frac13|J| \Big\}.
\]
Then for $k=(k_1,k_2)\in \N^2 $ we set $\Omega_{s,k_1,k_2}(F)\coloneqq\mathrm{rot}_s(W_{k_1,k_2}(F))+y_F$. 

We can define tiles for this system as follows. If $F\in\mathcal F_s$ for some $s\in S$ and $F=\mathrm{rot}_s(I\times J)+y_F$ with $I\times J$ as above, then we choose $\ell_I ^F,\ell_J ^F\in\Z$ such that $2^{\ell^F _I}<|I| \leq 2^{\ell^F _I+1}$ and $2^{\ell^F _J}<|J| \leq 2^{\ell^F _J+1}$. We will have
\begin{equation}\label{eq:rdftiles}
\tiles^{\mathcal F}\coloneqq \bigcup_{s\in S}\tiles_s ^{{\mathcal F}},\qquad \tiles_{s} ^{\mathcal F} \coloneqq\bigcup_{F\in\mathcal F_s} \tiles  _{s}(F),\qquad \tiles_s(F)\coloneqq \bigcup_{(k_1,k_2)\in\N^2} \tiles_{s,k_1,k_2}(F),\quad F\in\mathcal F_s,
\end{equation}
where 
\[
\tiles_{s,k_1,k_2}(F)\coloneqq\Big\{t=R_t\times \Omega_{s,k_1,k_2}(F):\, R_t\in\mathcal D_{s,-k_2+\ell_J ^F,-k_1+\ell_I ^F}\Big	\},\qquad  F\in\mathcal F_s.
\]
Note again that the tiles defined above obey the uncertainty principle in both $v,v^\perp$ for every fixed $v=(1,s)$ with $s\in S$.

The intrinsic square function associated with the collection $\mathcal F$ is denoted by $\Delta_{\tiles^{\mathcal F}}$ and formally has the same definition as \eqref{e:intrinsicsf}, where now the $\tiles$ are given by the collection $\tiles ^{\mathcal F}$ of \eqref{eq:rdftiles}. The corresponding theorem is the intrinsic analogue of a multiparameter directional Rubio de Francia square function estimate.

%%%%%%%%%%%%%%%%%%%%%%%%%%%%%% THEOREM THEOREM THEOREM
\begin{theorem} \label{thm:intrrdf} Let $\mathcal F$ be a finitely overlapping collection of two-parameter rectangles in directions given by $S$
	\[
	\Big \| \sum_{F\in\mathcal F}\ind_F \Big \|_\infty \lesssim 1.
	\]
Consider the collection of tiles $\tiles^\mathcal F$ defined in \eqref{eq:rdftiles} and $\Delta_{\tiles^{\mathcal F}}$ be the corresponding intrinsic square function. We have the estimates
\begin{align} 
& \big\|\Delta_{\tiles^{\mathcal F}}: L^{p}(\R^2)   \big\|  \lesssim_p (\log \#S)^{\frac12-\frac1p}(\log\log\#S)^{\frac12-\frac1p}, \qquad 2\leq p<4, \\
& \sup_{E,f}
  \frac{\left\|\Delta_{\tiles^{\mathcal F}} (f\cic{1}_E)\right\|_{4}}{|E|^{\frac14}} \lesssim (\log \#S)^{\frac14} (\log\log\#S)^{\frac14},
\end{align}
where the supremum in the last display is taken over all measurable sets $E\subset \R^2$ of finite positive measure and all Schwartz functions $f$ on $\R^2$ with $\|f\|_\infty\leq 1$.
\end{theorem}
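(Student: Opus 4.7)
The plan is to follow the template of the proof of Theorem~\ref{t:isf} almost verbatim, with the two-parameter geometry in each fixed direction accounted for by using Theorem~\ref{thm:carleson} with parameter $\upgamma=1$ instead of $\upgamma=0$. As in that proof, the $p=2$ bound is immediate from Lemma~\ref{l:globalortho}, since the hypothesis on $\mathcal F$ together with the Whitney structure \eqref{eq:whitney} (applied inside each $F$) guarantees $\sum_{t\in\tiles^{\mathcal F}}\ind_{\Omega_t}\lesssim 1$. By restricted weak type interpolation it then suffices to prove the endpoint bound at $p=4$.

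To that end, fix $E\subset \R^2$ of finite positive measure and $f$ with $\|f\|_\infty\leq 1$, set $\mathcal R\coloneqq \mathcal R_{\tiles^{\mathcal F}}=\{R_t: t\in\tiles^{\mathcal F}\}$, and define
\[
a_R\coloneqq \sum_{t\in\tiles^{\mathcal F}:\,R_t=R} a_t(f\ind_E),\qquad R\in\mathcal R,
\]
so that $\Delta_{\tiles^{\mathcal F}}(f\ind_E)^2= T_{\mathcal R}(a)$ and hence $\|\Delta_{\tiles^{\mathcal F}}(f\ind_E)\|_4^4= \mass_{a,2}(\mathcal R)^2$. The next step is to check that $a$ is an $L^\infty$-normalized Carleson sequence in the sense of Definition~\ref{def:carleson} (up to an absolute multiplicative constant). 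Finiteness of $\mass_a$ and the uniform bound $\mass_{a,1}(\mathcal R)\lesssim |E|$ follow directly from Lemma~\ref{l:globalortho} applied to $f\ind_E$. For the subordination property, whenever $\mathcal L\subset \mathcal R$ is subordinate to some $\mathcal T\subset \mathcal P^2_\sigma$ with $\sigma$ a fixed slope in $S$, setting $\tiles_{\mathcal L}\coloneqq \{t\in\tiles^{\mathcal F}: R_t\in\mathcal L\}$ we have
\[
\sum_{R\in\mathcal L}a_R=\sum_{t\in\tiles_{\mathcal L}} a_t(f\ind_E)\lesssim |\mathsf{sh}(\mathcal T)|\,\|f\ind_E\|_\infty^2 \leq |\mathsf{sh}(\mathcal T)|
\]
by Lemma~\ref{l:localortho}, whose hypothesis is met by construction.

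It remains to verify the hypothesis on the maximal operators $\M_{\mathcal R_s}$ with $\mathcal R_s\coloneqq \mathcal R\cap \mathcal D^2_s$. For each fixed $s\in S$, the collection $\mathcal R_s$ consists of parallelograms of arbitrary eccentricity with long side along $v_s=(1,s)$ and short side vertical; in the sheared coordinates induced by $A_s$ the operator $\M_{\mathcal R_s}$ is dominated pointwise by the standard two-parameter strong maximal function, so
\[
\sup_{s\in S}\big\|\M_{\mathcal R_s}:L^p\to L^{p,\infty}\big\|\lesssim (p')^{1},\qquad p\to 1^+,
\]
which is precisely the hypothesis of Theorem~\ref{thm:carleson} with $\upgamma=1$. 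Applying that theorem yields
\[
\mass_{a,2}(\mathcal R)\lesssim (\log \#S)^{\frac12}(\log\log \#S)^{\frac12}\,\mass_{a,1}(\mathcal R)^{\frac12}\lesssim (\log \#S)^{\frac12}(\log\log \#S)^{\frac12}|E|^{\frac12},
\]
which squared gives the claimed restricted weak-type bound at $p=4$, and then interpolation with the $p=2$ estimate completes the range $2\leq p<4$.

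The main obstacle in this plan is not the Carleson embedding or the interpolation steps, which are formal once the preceding ingredients are in place, but rather the verification that the two-parameter, rotated tile configuration of \S\ref{sec:intrinsicrdf} is genuinely covered by the abstract Lemmas~\ref{l:globalortho} and~\ref{l:localortho}: one needs to confirm that the rotation-by-$s$ frequency components $\Omega_{s,k_1,k_2}(F)$ together with the shear-adapted spatial rectangles $R_t\in\mathcal D^2_{s,-k_2+\ell_J^F,-k_1+\ell_I^F}$ still produce $L^2$-normalized wave packets of the form $\mathcal A_t^{M_0}$, that the two-parameter Whitney collection $\{\Omega_{s,k_1,k_2}(F)\}$ over $F\in\mathcal F_s$ has bounded overlap (for Lemma~\ref{l:globalortho}), and that Journé's lemma as in Lemma~\ref{l:journe} is applied in the correct one-directional sheared grid (for Lemma~\ref{l:localortho}). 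Since $S\subset [-1,1]$, rotations and shearings differ only by bounded linear distortion, so these checks are technical but routine, and the abstract machinery then closes the argument.
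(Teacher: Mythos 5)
Your proposal is correct and follows exactly the route the paper intends: the paper explicitly states that Theorem~\ref{thm:intrrdf} is proved by the same combination of Lemmas~\ref{l:globalortho}, \ref{l:localortho} and Theorem~\ref{thm:carleson} used for Theorem~\ref{t:isf}, and then omits the details, which you have correctly supplied. One small inaccuracy in your framing: Theorem~\ref{t:isf} (the rough-cone case) already invokes Theorem~\ref{thm:carleson} with $\upgamma=1$, producing the $\log\log$ factors there too; it is Theorem~\ref{thm:smoothconeintrinsic} that corresponds to $\upgamma=0$, so the contrast you draw should be with the latter. This does not affect the correctness of your argument, since you apply $\upgamma=1$ here, which is what the two-parameter sheared maximal operators $\M_{\mathcal R_s}$ require, and you also cite \eqref{eq:whitney} for the Whitney overlap when you mean the analogous rectangular Whitney pieces $W_{k_1,k_2}(F)$ of \S\ref{sec:intrinsicrdf}; both are harmless slips of reference.
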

%%%%%%%%%%%%%%%%%%%%%%%%%%%%%% THEOREM THEOREM THEOREM

%%%%%%%%%%%%%%%%%%%%%%%%%%%%%% REMARK REMARK REMARK 
\begin{remark}\label{rmrk:1paramrecs} As before, there is slight improvement in the case of one-parameter spatial components in each direction. More precisely suppose that $\mathcal F=\cup_{s\in S}\mathcal F_s$ is a given collection of disjoint rectangles in directions given by $S$. If for each $s\in S$ the family $\mathcal R_{\mathcal F_s}\coloneqq\{R_t:\, t\in\tiles_{\mathcal F_s}\}$ yields a weak-type $(1,1)$ maximal operator then the estimates of Theorem~\ref{thm:intrrdf} hold without the $\log\log$-terms.	
\end{remark}
%%%%%%%%%%%%%%%%%%%%%%%%%%%%%% REMARK REMARK REMARK 

%%%%%%%%%%%%%%%%%%%%%%%%%%%%%% REMARK REMARK REMARK 
\begin{remark}\label{rmrk:slanted} Suppose that $\mathcal R=\bigcup_{s\in S}\mathcal R_s\subset \mathcal P_S ^2$ is a family of parallelograms in directions given by $s$, namely we have that if $R\in\mathcal R_s$ then $R=A_s (I\times J)+y_R$ for some rectangle $I\times J$ in $\R^2$ with sides parallel to the coordinate axes and centered at $0$, and $y_R\in \R^2$. Now there is an obvious way to construct a Whitney partition of each $R\in\mathcal R$. Indeed we just define the frequency components 
	\[
	\Omega_{s,k_1,k_2}(R)\coloneqq A_s(W_{k_1,k_2}(I\times J))+y_R
	\]
with $W_{k_1,k_2}(I\times J)$ as constructed before. Then
	\[
	\tiles_{s,k_1,k_2}(R)\coloneqq\Big\{R_t\times \Omega_{s,k_1,k_2}(R):\, R_t\in\mathcal D_{s,-k_2+\ell_J ^F,-k_1+\ell_I ^F}\Big	\},\qquad  R\in\mathcal R_s
\]
and $\tiles$ are given as in \eqref{eq:rdftiles}. With this definition there is a corresponding intrinsic square function $\Delta_{\tiles_\mathcal R}$ which satisfies the bounds of Theorem~\ref{thm:intrrdf}. The improvement of Remark~\ref{rmrk:1paramrecs} is also valid if $\mathcal R=\cup_{s\in S}\mathcal R_s$ and each $\mathcal R_s$ consists of rectangles of fixed eccentricity.
\end{remark}
%%%%%%%%%%%%%%%%%%%%%%%%%%%%%% REMARK REMARK REMARK 

The proof of Theorem~\ref{thm:intrrdf} relies again on the global and local orthogonality estimates of \S\ref{sec:orthocone} and a subsequent application of the directional Carleson embedding theorem, Theorem~\ref{thm:carleson}. We omit the details.

%%%%%%%%%%%%%%%%%%%%%%%%%%%%%% SECTION SECTION SECTION 
\section{Sharp bounds for  conical square  functions} \label{sec:cone} 
We begin this section by recalling the definition for the smooth conical frequency projections given in the introduction. Let $\uptau\subset (0,2\uppi)$ be an interval  and consider the corresponding rough cone multiplier
\[
C_\uptau f(x)   \coloneqq \int_{0}^{2\uppi} \int_{0}^\infty  \widehat f(\varrho\e^{i\vartheta}) \cic{1}_{\uptau}(\vartheta)  \e^{ix\cdot\varrho\e^{i\vartheta} } \, \varrho\d \varrho \d \vartheta,\qquad x\in\R^2,
\]
and its smooth analogue
\begin{equation}
\label{e:smoothcones}
C_\uptau^\circ f(x)    \coloneqq \int_{0}^{2\uppi} \int_{0}^\infty  \widehat f(\varrho\e^{i\vartheta}) \upbeta\left(\frac{\vartheta-c_\uptau}{|\uptau|/2}\right)  \e^{ix\cdot\varrho\e^{i\vartheta} } \, \varrho\d \varrho \d \vartheta,\qquad x\in\R^2.
\end{equation}
where $\upbeta$ is a smooth function on $\R$ supported on $[-1,1]$ and equal to 1 on $[-\frac12,\frac12]$ and  $c_\uptau$, $|\uptau|$ stand respectively for the center and  length of $\uptau$.

This section is dedicated to the proofs of two related  theorems concerning conical  square functions.  The first is a quantitative estimate for a  square function associated with the  smooth conical multipliers of a finite collection of intervals with bounded overlap given in Theorem~\ref{thm:smoothcones}, namely the estimates
\begin{align}
 \label{e:sfsmooth}& \left\| \{C_\uptau^\circ f\}  \right\|_{L^p(\R^2; \ell^2_{\cic{\uptau}})} \lesssim_p (\log \# \cic{\uptau} )^{\frac{1}2-\frac1p}\|f\|_p
\end{align}
for $2\leq p<4$,
as well as the restricted type analogue valid for all measurable sets $E$
\begin{align}
 \label{e:sfsmoothrs}&  \left\| \{C_\uptau^\circ(f\cic{1}_E)\}  \right\|_{L^4(\R^2; \ell^2_{\cic{\uptau}})}  \lesssim (\log \# \cic{\uptau} )^{\frac{1}4}|E|^{\frac14}\|f\|_\infty,
\end{align}
under the assumption of finite overlap 
\begin{equation} \label{e:bo}
\Bigl\| \sum_{\uptau\in \cic{\uptau}} \ind_{\uptau} \Bigr\|_\infty  \lesssim 1.
\end{equation}

The second theorem concerns an estimate for the rough conical square function for a collection of finitely overlapping cones $\cic{\uptau}$.

%%%%%%%%%%%%%%%%%%%%%%%%%%%%%% THEOREM THEOREM THEOREM
\begin{theorem} \label{thm:wdcones} Let $\cic{\uptau}$ be a finite collection intervals in $[0,2\pi)$ with finite overlap as in \eqref{e:bo}. Then the square function estimate 
\begin{align} \label{e:sfrough}
& \left\| \{C_\uptau  f\}  \right\|_{L^p(\R^2; \ell^2_{\cic{\uptau}})} \lesssim_p (\log \# \cic{\uptau} )^{1-\frac2p}(\log\log\#\cic{\uptau})^{\frac12-\frac1p}\|f\|_p.
\end{align}
holds for each $2\leq p<4.$
\end{theorem}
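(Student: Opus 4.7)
The plan is to combine a Khintchine randomization with the intrinsic square function estimates of Theorem~\ref{t:isf}, exploiting the Whitney tile structure of Section~\ref{sec:conetiles} that is precisely designed to capture the boundary behavior of rough cone multipliers. By Khintchine's inequality, the estimate~\eqref{e:sfrough} is equivalent to the uniform operator bound $\|T_\epsilon : L^p(\R^2)\to L^p(\R^2)\|\lesssim (\log N)^{1-2/p}(\log\log N)^{1/2-1/p}$ for the random operator $T_\epsilon \coloneqq \sum_\uptau \epsilon_\uptau C_\uptau$, $\epsilon\in\{\pm 1\}^{\cic{\uptau}}$, $N\coloneqq \#\cic\uptau$. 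Using the resolution of each $C_\uptau$ into the wave packets of Section~\ref{sec:conetiles}, namely $C_\uptau f = \sum_{t\in\tiles_\uptau} \l f,\phi_t\r \psi_t$ with $\phi_t,\psi_t\in \mathcal A_t^{M_0}$, one obtains the bilinear formula
\[
\l T_\epsilon f, g\r = \sum_{t\in\tiles} \epsilon_{\uptau(t)}\l f, \phi_t\r\overline{\l g, \psi_t\r}.
\]

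A Cauchy-Schwarz inequality in the tile variable, with the normalized spatial weight $\ind_{R_t}/|R_t|$, yields the integral domination
\[
|\l T_\epsilon f, g\r|\leq \int_{\R^2}\Delta_\tiles(f)(x)\,\widetilde\Delta_\tiles(g)(x)\,\d x,
\]
with $\Delta_\tiles$ and $\widetilde\Delta_\tiles$ the intrinsic square functions associated to the primal and dual wave packets, both handled by Theorem~\ref{t:isf}. Specializing to the restricted weak-type endpoint $p=4$ with $f = \ind_E$, $g=\ind_F$, H\"older's inequality gives
\[
|\l T_\epsilon \ind_E, \ind_F\r|\leq \|\Delta_\tiles(\ind_E)\|_4\,\|\widetilde\Delta_\tiles(\ind_F)\|_{4/3}.
\]
The first factor is bounded using \eqref{e:isfrest} by $(\log N)^{\frac14}(\log\log N)^{\frac14}|E|^{\frac14}$. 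The second factor, whose index sits below the natural range $[2,4)$ of Theorem~\ref{t:isf}, is controlled by a duality-based interpolation between the $L^2$-global orthogonality of Lemma~\ref{l:globalortho} and the restricted-type bound \eqref{e:isfrest}, yielding the matching estimate $\|\widetilde\Delta_\tiles(\ind_F)\|_{4/3}\lesssim (\log N)^{\frac14}|F|^{\frac34}$. Multiplication of the two factors produces a restricted weak $(4,4)$-bound for $T_\epsilon$ with constant $(\log N)^{\frac12}(\log\log N)^{\frac14}$.

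The proof then concludes by Marcinkiewicz interpolation between the trivial $L^2$-bound, which follows from Plancherel and the finite overlap of the sectors in $\cic\uptau$, and the restricted weak $(4,4)$-bound just obtained. The interpolation parameter $\theta=2-4/p$ for $2\leq p<4$ produces exactly the exponents $1-2/p$ in $\log N$ and $1/2-1/p$ in $\log\log N$ claimed in the statement. The main technical obstacle is establishing the $L^{4/3}$-bound for the dual intrinsic square function $\widetilde\Delta_\tiles$, since this index is outside the scope of Theorem~\ref{t:isf}; in particular, the interpolation must be carried out without incurring an additional $(\log\log N)$-loss, and this requires a careful wave-packet duality argument exploiting the symmetry of $T_\epsilon$ and the tail decay of the $\psi_t$'s afforded by the Journ\'e-type localization of Lemma~\ref{l:journe}.
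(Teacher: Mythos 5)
Your proposal takes a genuinely different route from the paper, but it contains a gap at a crucial step that is not filled by anything in the paper and that you yourself flag as "the main technical obstacle."

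The paper's proof of Theorem~\ref{thm:wdcones} proceeds in two separate reductions: first, Lemma~\ref{lem:decoupling} applies the weighted Littlewood--Paley inequality of Proposition~\ref{prop:beha} in the tangential directions $v_{s^\pm},v_s$, combined with Katz's directional maximal theorem, to pass from the rough cones $C_\uptau$ to the finitely overlapping smooth Whitney pieces $S_k\Phi_{s,m}f$ at a cost of $(\log N)^{\frac12-\frac1p}$; second, Theorem~\ref{t:isf} applied to the full Whitney tile family of \S\ref{sec:conetiles} delivers the remaining loss $(\log N)^{\frac12-\frac1p}(\log\log N)^{\frac12-\frac1p}$. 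You instead eliminate the decoupling step entirely and go straight to the bilinear form $\l T_\epsilon f, g\r$ after a Khintchine randomization, splitting the H\"older exponents $4$ and $4/3$ between the two intrinsic square functions. This is legitimate in outline (the Khintchine reduction and the Cauchy--Schwarz domination $|\l T_\epsilon f,g\r|\leq\int\Delta_\tiles(f)\widetilde\Delta_\tiles(g)$ are both sound, modulo the standard averaging over grid shifts), and if both factor estimates held it would indeed reproduce the paper's exponents.

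The gap is the claim $\|\widetilde\Delta_\tiles(\ind_F)\|_{4/3}\lesssim (\log N)^{\frac14}|F|^{\frac34}$. This exponent sits strictly below the range $[2,4)$ covered by Theorem~\ref{t:isf}, and the mechanism you offer does not reach it. Interpolating the $L^2\to L^2$ bound of Lemma~\ref{l:globalortho} with the restricted-type $L^\infty\to L^4$ bound \eqref{e:isfrest} only produces estimates for $p\geq 2$; the endpoint $4/3$ lies on the wrong side of $L^2$ and cannot be reached without an input below $L^2$, which the paper does not provide. "Duality-based interpolation" does not apply in the ordinary sense either: $\Delta_\tiles$ is a nonlinear square function, not a linear operator, so no adjoint is available to dualize. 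Finally, the appeal to "the tail decay of the $\psi_t$'s afforded by the Journ\'e-type localization of Lemma~\ref{l:journe}" misattributes the role of that lemma: it controls the Carleson-measure (local orthogonality) property used in Lemma~\ref{l:localortho}, not sub-$L^2$ integrability of the square function. In fact, Rubio-de-Francia-type square functions are generically unbounded on $L^p$ for $p<2$ even in restricted type (see the counterexample referenced from \cite{RdF}), so the claim that $\|\widetilde\Delta_\tiles(\ind_F)\|_{4/3}$ obeys a bound with only a $(\log N)^{\frac14}$ loss is a substantive statement requiring a new argument that exploits the specific Whitney/conical geometry; absent that argument, the chain of inequalities is broken and the restricted weak $(4,4)$ endpoint is not established.
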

Theorem   \ref{thm:smoothcones} is sharp, in terms of   $\log \# \cic{\upomega}$-dependence, for all $2\leq p<4$ and for $p=4$ up to the restricted type.
Theorem \ref{thm:wdcones} improves on \cite[Theorem 1]{CorGFA}, where the dependence on cardinality  is unspecified.
 % The approach of \cite{CorGFA} relies in particular on the Meyer lemma for $p=4$, which introduces  a factor $(\log \# \cic{\upomega} )^{2-\frac4p}$ as described in Subsection \ref{ss:wcomp}.
Examples providing a lower bound  of $(\log \# \cic{\upomega} )^{\frac12-\frac1p}\|f\|_p$ for the left hand side of \eqref{e:sfrough}, and showing the sharpness of Theorem   \ref{thm:smoothcones}, are detailed in Section \ref{sec:cex}. 

The remainder of the section is articulated as follows. In the upcoming Subsection~\ref{sec:proofA} we  show Theorem \ref{thm:smoothcones}.  The subsequent subsection is dedicated to the proof of Theorem \ref{thm:wdcones}.

%%%%%%%%%%%%%%%%%%%%%%%%%%%%%% SECTION SECTION SECTION 
\subsection{Proof of Theorem \ref{thm:smoothcones}}\label{sec:proofA} We are given a finite collection of intervals $\upomega\in \cic{\upomega}$ having bounded overlap as in \eqref{e:bo}. By finite splitting we may reduce to the case of $ \upomega\in \cic{\upomega}$ being pairwise disjoint; we treat this case throughout.

The first step in the proof of Theorem \ref{thm:smoothcones} is a radial decoupling. Let $\uppsi$ be a smooth radial function on $\R^2$ with
\[
\ind_{[1,2]}(|\xi|)\leq  \uppsi(\xi) \leq \ind_{[2^{-1},2^{2}]}(|\xi|)
\]
and  define the Littlewood-Paley projection
\[
S_kf(x)\coloneqq \int \uppsi(2^{-k}\xi)\widehat f(\xi) \, \e^{ix\cdot \xi} \, \d \xi, \qquad x\in \R^2.
\] 
The following weighted Littlewood-Paley inequality is contained in \cite{BeHa}*{Proposition 4.1}.

%%%%%%%%%%%%%%%%%%%%%%%%%%%%%% PROPOSITION PROPOSITION PROPOSITION
\begin{proposition}[Bennett-Harrison, \cite{BeHa}]\label{prop:beha} Let $w$ be a non-negative locally integrable function. 
\[
	\int_{\R^2} | f|^2 w \lesssim \int_{\R^2} \sum_{k\in\mathbb Z} |S_k(f)|^2 \M^{[3]}w
\]
with implicit constant independent of $w,f$, where we recall that $\M^{[3]}$ denotes the three-fold iteration of the Hardy- Littlewood maximal function $\M$ with itself.
\end{proposition}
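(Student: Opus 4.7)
The plan is to prove a weighted Littlewood-Paley estimate of Córdoba-Fefferman flavor by combining a proper Littlewood-Paley decomposition with Fourier localization of the weight and iterated Fefferman-Stein inequalities; the three-fold iteration $\M^{[3]}$ arises by tracking three successive transfers of the Hardy-Littlewood maximal function onto $w$.

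First, I would set up a genuine Littlewood-Paley partition of unity: choose a smooth radial $\tilde{\uppsi}$ whose support is strictly inside $\{|\xi|\in[1,2]\}$, hence strictly inside the support of $\uppsi$, with $\sum_{k\in\Z}\tilde{\uppsi}(2^{-k}\xi)\equiv 1$ on $\R^2\setminus\{0\}$. Let $\tilde S_k$ denote the corresponding Fourier multipliers; by construction $\tilde S_k = \tilde S_k S_k$, and the Schwartz decay of their convolution kernels gives the pointwise control $|\tilde S_k f(x)|\lesssim \M(S_k f)(x)$. Assuming (by density) that $f$ is Schwartz, expand
\[
\int |f|^2 w = \sum_{k,l\in\Z}\int \tilde S_k f\,\overline{\tilde S_l f}\,w
\]
and split the double sum into a diagonal band $|k-l|\leq 2$ and an off-diagonal piece $|k-l|\geq 3$. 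For the diagonal band, the pointwise domination above combined with Fefferman-Stein yields
\[
\sum_{|k-l|\leq 2}\int |\tilde S_k f||\tilde S_l f|\,w \lesssim \sum_k \int |\M(S_k f)|^2 w \lesssim \sum_k \int |S_k f|^2 \M w = \int (Gf)^2 \M w,
\]
where $Gf \coloneqq (\sum_k |S_k f|^2)^{1/2}$. This contribution is already within the claimed estimate.

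For the off-diagonal piece (focus on $l\geq k+3$, the symmetric case being identical), the key observation is that $\tilde S_k f\cdot\overline{\tilde S_l f}$ has Fourier transform supported in the annulus $\{|\xi|\sim 2^l\}$. Consequently, the weight may be replaced by its smooth frequency projection $\Pi_l w$ onto this annulus, which satisfies the pointwise bound $|\Pi_l w(x)|\lesssim \M w(x)$ uniformly in $l$. Grouping the $k$-sum into the smooth low-pass filter $P_{\leq l-3}f\coloneqq \sum_{k\leq l-3}\tilde S_k f$, which satisfies $|P_{\leq l-3}f|\lesssim \M f$, one obtains
\[
\sum_{k\leq l-3}\int \tilde S_k f\,\overline{\tilde S_l f}\,w = \int P_{\leq l-3}f\cdot \overline{\tilde S_l f}\cdot \Pi_l w.
\]
Estimating $|\Pi_l w|\lesssim \M w$, applying Cauchy-Schwarz in the integrand against the weight $\M w$, then Cauchy-Schwarz in the sum over $l$ (producing $(Gf)^2$ from $\sum_l |\tilde S_l f|^2$ and absorbing $|P_{\leq l-3}f|^2$ uniformly in $l$ via $(\M f)^2$), and finally two successive applications of Fefferman-Stein to transfer both $\M f$ and $\M(S_l f)$ onto the weight, yields an off-diagonal bound of $\int (Gf)^2 \M^{[3]} w$.

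The main obstacle is the off-diagonal analysis: a crude pointwise bound gives $\sum_{k,l}|\tilde S_k f||\tilde S_l f| = (\sum_k |\tilde S_k f|)^2$, which is generally much larger than $(Gf)^2$, so one must genuinely exploit the Fourier cancellation between distant scales to replace $w$ by $\Pi_l w$. The bookkeeping of three Fefferman-Stein transfers—one on the diagonal, and two on the off-diagonal to recast the factors $\M f$ and $\M(S_l f)$ as additional applications of $\M$ to the weight—is precisely what produces $\M^{[3]}w$ on the right-hand side.
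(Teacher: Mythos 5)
Your setup and diagonal band are fine, but the off--diagonal argument has a genuine gap: the Cauchy--Schwarz scheme you describe does not close. After you replace $|\Pi_l w|$ by $\M w$ uniformly in $l$, you are left with
\[
\sum_{l\in\Z}\int |P_{\le l-3}f|\,|\tilde S_l f|\,\M w .
\]
Cauchy--Schwarz in the $l$-sum puts $\sum_l |\tilde S_l f|^2$ in one factor (good, this is $(Gf)^2$), but the other factor is then $\sum_l \int |P_{\le l-3}f|^2 \M w$. Since $|P_{\le l-3}f|\lesssim \M f$ only gives a bound that is \emph{uniform} in $l$, this sum is $\sum_{l\in\Z}\int (\M f)^2 \M w=+\infty$: there are infinitely many $l$ and no decay in $l$ anywhere in the estimate. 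The same divergence appears if you try to pull $\sup_l|P_{\le l-3}f|$ out and sum $|\tilde S_l f|$ in $\ell^1$, or if you organize the off-diagonal by the gap $j=l-k$ and sum over $j\geq 3$ (each $j$ contributes $\gtrsim \int (Gf)^2 \M w$ with no decay in $j$). The structure that would rescue you is precisely the scale-localization of $\Pi_l w$ — for a fixed $x$, $\Pi_l w(x)$ cannot be large for many $l$ simultaneously — but this information is exactly what the bound $|\Pi_l w|\lesssim \M w$ throws away. And it cannot be recovered by a pointwise square-function inequality for the weight: there is no estimate of the form $\sum_l |\Pi_l w(x)|^2 \lesssim (\M^{[m]}w(x))^2$ for any finite $m$ (test $w$ equal to a sum of widely separated bumps of geometrically growing mass at geometrically growing distances from $x$; then $\Pi_l w(x)\sim 1$ over a large range of $l$ while $\M w(x)\sim 1$).

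This is not a bookkeeping issue but the actual content of the proposition. The one-sided (reverse) weighted Littlewood--Paley inequality is genuinely harder than the forward one precisely because of these cross terms, and the ``three iterates'' in $\M^{[3]}$ are not produced by three mechanical Fefferman--Stein transfers layered onto crude Cauchy--Schwarz. You would need to either (i) retain the oscillation of $\Pi_l w$ across scales — e.g.\ integrate by parts in frequency using that the symbol of $\Pi_l$ vanishes near the origin, which introduces derivative/gradient gains that can supply summability in $j=l-k$ — or (ii) abandon the direct $k,l$-expansion and instead argue via duality and the vector-valued Fefferman--Stein inequality, constructing an auxiliary weight majorizing $w$ that lies in the right weight class, and then showing that this auxiliary weight is controlled by $\M^{[3]}w$. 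Either way, the step ``Cauchy--Schwarz in $l$, absorbing $|P_{\le l-3}f|^2$ uniformly in $l$'' as currently written is where the proof fails.
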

%%%%%%%%%%%%%%%%%%%%%%%%%%%%%% PROPOSITION PROPOSITION PROPOSITION

We may easily deduce the next lemma from the proposition.
%%%%%%%%%%%%%%%%%%%%%%%%%%%%%% LEMMA LEMMA LEMMA
\begin{lemma}\label{lem:wLP} For any $p\geq 2$ we have
\begin{equation}
\label{e:radialdec}
\left\| \{C_{\uptau}^\circ  f\}  \right\|_{L^p(\R^2; \ell^2_{\cic{\uptau}})} \lesssim \Big\|\Big(\sum_{k\in\mathbb Z,\, \uptau\in \cic{\uptau}} |  C_\uptau^\circ S_k(f)|^2 \Big)^\frac12\Big\|_{p}.
\end{equation}
\end{lemma}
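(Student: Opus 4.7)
The plan is to linearize the $L^p(\ell^2)$-norm on the left hand side by duality, apply the weighted Littlewood--Paley inequality of Proposition~\ref{prop:beha} (Bennett--Harrison) to each smooth conical projection separately, and close the argument with H\"older and a bound for the iterated Hardy--Littlewood maximal function. The commutation of Fourier multipliers will do most of the work.

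First I would fix $p\geq 2$ and, by duality of $L^{p/2}(\R^2)$ against $L^{(p/2)'}(\R^2)$, write
\[
 \left\| \{C_{\uptau}^\circ f\}  \right\|_{L^p(\R^2; \ell^2_{\cic{\uptau}})}^2
 = \sup_{g\geq 0,\ \|g\|_{(p/2)'}\leq 1} \int_{\R^2}\sum_{\uptau\in\cic{\uptau}} |C_\uptau^\circ f(x)|^2 g(x)\,\d x,
\]
with the convention $(p/2)'=\infty$ when $p=2$. Fix an admissible $g$. Applying Proposition~\ref{prop:beha} to the function $C_\uptau^\circ f$ in place of $f$, with weight $g$, gives
\[
 \int_{\R^2} |C_\uptau^\circ f|^2 g \lesssim \int_{\R^2} \sum_{k\in\Z}|S_k(C_\uptau^\circ f)|^2\, \M^{[3]}g,
\]
uniformly in $\uptau$ and $g$. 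Since $S_k$ and $C_\uptau^\circ$ are both radial/angular Fourier multipliers they commute, so $S_k(C_\uptau^\circ f)=C_\uptau^\circ S_k f$.

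Summing the previous display over $\uptau\in\cic{\uptau}$ and applying H\"older's inequality with exponents $(p/2,(p/2)')$ yields
\[
\int_{\R^2} \sum_{\uptau\in\cic{\uptau}}|C_\uptau^\circ f|^2 g
\lesssim \Big\| \sum_{k,\uptau}|C_\uptau^\circ S_k f|^2 \Big\|_{p/2} \,\|\M^{[3]}g\|_{(p/2)'}
= \Big\|\Big(\sum_{k,\uptau}|C_\uptau^\circ S_k f|^2\Big)^{1/2}\Big\|_p^2 \,\|\M^{[3]}g\|_{(p/2)'}.
\]
Since $(p/2)'\in(1,\infty]$ for $p\geq 2$, the three-fold iteration $\M^{[3]}$ is bounded on $L^{(p/2)'}(\R^2)$ (trivially at the endpoint $p=2$ where $(p/2)'=\infty$). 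Hence $\|\M^{[3]}g\|_{(p/2)'}\lesssim \|g\|_{(p/2)'}\leq 1$, and taking the supremum over $g$ produces the desired bound.

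The main ``obstacle'' is merely a bookkeeping one: one has to observe that Proposition~\ref{prop:beha} applies for an arbitrary nonnegative locally integrable weight and that the iterated maximal function $\M^{[3]}$ appearing there is harmless for $p\geq 2$ because the dual exponent of $p/2$ lands strictly above $1$ (or at $\infty$), which is exactly the range where $\M$ is bounded. No cancellation between different $\uptau$ or between different annular scales is needed, which is why the estimate reduces cleanly to the intrinsic joint $(k,\uptau)$-square function.
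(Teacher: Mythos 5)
Your proof is correct and essentially identical to the paper's: both linearize by duality against a weight in $L^{(p/2)'}(\R^2)$, apply the Bennett--Harrison weighted Littlewood--Paley inequality (Proposition~\ref{prop:beha}) to each $C_\uptau^\circ f$ using commutation of the Fourier multipliers, and finish with H\"older and the $L^{(p/2)'}$-boundedness of $\M^{[3]}$. The only cosmetic difference is that the paper dismisses $p=2$ as trivial and picks the extremizing weight directly, whereas you take a supremum over admissible $g$ and handle $p=2$ via the $L^\infty$-boundedness of $\M^{[3]}$.
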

%%%%%%%%%%%%%%%%%%%%%%%%%%%%%% LEMMA LEMMA LEMMA

%%%%%%%%%%%%%%%%%%%%%%%%%%%%%% PROOF PROOF PROOF
\begin{proof} The case $p=2$ is trivial so we assume $p>2$. Letting $r\coloneqq \frac p2>1$ there exists some $w\in L^{r'}(\R^2)$ with $\|w\|_{r'}=1$ such that
	\[
\left\| \{C_{\uptau}^\circ  f\}  \right\|_{L^p(\R^2; \ell^2_{\cic{\uptau}})}^2= \sum_{\uptau\in \cic{\uptau}} \int_{\R^2} |C_\uptau^\circ f |^2 w\lesssim \sum_{k\in\mathbb Z,\, \uptau\in \cic{\uptau}}\int_{\R^2}|C_\uptau^\circ S_k(f)|^2 M^{[3]}w
	\]
and the lemma follows by H\"older's inequality and the boundedness of $M^{[3]}$ on $L^{r'}(\R^2)$.
\end{proof}
%%%%%%%%%%%%%%%%%%%%%%%%%%%%%% PROOF PROOF PROOF

The second and final step of the proof of Theorem \ref{thm:smoothcones} is the reduction of the operator appearing in the right hand side of \eqref{e:radialdec} to the model operator of Theorem \ref{thm:smoothconeintrinsic}. 

In order to match the notation of \S\ref{sec:conetiles} we write $\{\omega_s\}_{s\in S}$ for the collection of arcs in $\mathbb S^1$ corresponding to the collection of intervals $\cic{\uptau}$, namely for $\uptau\in\cic{\uptau}$ we implicitly define $s=s_\uptau$ by means of $v_s ^\perp/|v_s ^\perp|\coloneqq \e^{ic_\uptau}={(1,s)}/{|(1,s)|}$. We set $S\coloneqq\{s_\uptau:\, \uptau\in\cic{\uptau}\}$ and define the corresponding arcs in $\mathbb S^1$ as
\[
\upomega_{s_\uptau}\coloneqq\{\e^{i\theta}:\, \theta\in\uptau\}.
\]
Now the cone $C_\uptau$ is the same thing as the cone $C_s$ and $\#S=\#\cic{\uptau}$. Similarly we write $C^\circ _\uptau=C^\circ _{s_\uptau}$ so the cones can now be indexed by $s\in S$. Define $\ell_s$ such that $2^{-\ell_s} \leq |\omega_s|\leq 2^{-\ell_s+1}$.

By finite splitting and rotational invariance there is no loss in generality with assuming  that $S\subset[-1,1]$. Notice that the support of the multiplier of $C_{s}^\circ   S_k$ is contained in the frequency sector $\Omega_{s,k}$ defined in \eqref{e:freqsconical}.  
By standard procedures of time-frequency analysis, as for example in \cite[Section 6]{DDP}, the operator $C_{s}^\circ   S_k$ can  be recovered by appropriate averages of operators
\begin{equation}
\mathsf{C}_{s,k} f \coloneqq 
\sum_{t \in \tiles_{s,k}} \langle f,\upphi_t\rangle \upphi_t
\end{equation}
where $\upphi_t\in \mathcal{A}_t^{8M_0}$ for all $t\in \tiles_{s_\upomega,k}$ and $\tiles_{s,k}$ is defined in \eqref{eq:smoothconetiles}. Here $M_0=2^{50}$ is as chosen in  \eqref{e:intwcdef}. Fixing $s,k$ for the moment we preliminarily observe that for each $\upnu\geq 1$ the collection $\mathcal R_{s,k}\coloneqq \mathcal R_{\tiles_{s,k}}= \{R_t:\,t\in\tiles_{s,k} \}$ can be partitioned  into subcollections $ \{\mathcal R_{s,k,\upnu}^j: \, 1\leq j \leq 2^{8\upnu}\}$ with the property that
 \[
R_1,R_2 \in  \mathcal R_{s,k,\upnu}^j \implies 2^{2\upnu+4}R_1 \cap 2^{2\upnu+4} R_2 =\varnothing.
 \]
We will also use below the Schwartz decay of $\upphi_t\in \mathcal A_{t}^{M_0}$ in the form
 \[
\sqrt{|R_t|} |\upphi_t  | \lesssim \cic{1}_{R_t} + \sum_{\upnu\geq 0} 2^{-8M_0\upnu}   \sum_{\substack{\uprho \in \mathcal{R}_{s,k} \\  \uprho \not \subset 2^{\upnu} R_t, \, \uprho \subset 2^{\upnu+1}R_t}}   \cic{1}_\uprho  .\]
Using Schwartz decay of $\upphi_t$ twice, in particular  to bound by an absolute constant the second factor obtained by Cauchy-Schwartz after the first step, we get
\[
\begin{split}
&\quad |\mathsf{C}_{s,k} f  |^2    \lesssim 	\left(\sum_{t \in \tiles_{s,k}} | \langle   f,\upphi_t\rangle|^2 \frac{|\upphi_t  |}{\sqrt{|R_t|}} \right)	\left(\sum_{t \in \tiles_{s,k}}   \sqrt{|R_t|} |\upphi_t| \right)  
\\ 
& \lesssim \sum_{t \in \tiles_{s,k}} | \langle   f,\upphi_t\rangle|^2 \frac{ \ind_{R_t}   }{ |R_t| }+\sum_{\upnu \geq 0}  2^{-8M_0\upnu}   \sum_{t \in \tiles_{s,k}} \sum_ {\substack{\uprho \in \mathcal{R}_{s,k} \\  \uprho \not \subset 2^{\upnu} R_t, \uprho \subset 2^{\upnu+1}R_t}}  | \langle   f,\upphi_t\rangle|^2\frac{ \cic{1}_{\uprho}}{|\uprho|}  
\\
&\leq \sum_{t\in \tiles_{s,k}} | \langle   f,\upphi_t\rangle|^2 \frac{ \ind_{R_t}   }{ |R_t| }+ \sum_{\upnu \geq 0}  2^{-8M_0\upnu}  \sum_{R\in\mathcal R_{s,k}}  \sum_{j=1}^{2^{8\upnu}} \sum_{\substack{\uprho \in \mathcal{R}_{s,k} \\  \uprho \not \subset 2^{\upnu} R_t, \uprho \subset 2^{\upnu+1}R_t}}   |\langle   f,\upphi_t\rangle|^2 \frac{\ind_\uprho}{|\uprho|}.
\end{split}
\]
Now for fixed $\upomega,k,\upnu,j$ and $t\in \tiles_{s,k}$ observe that there is at most one $\uprho=\uprho_{s,k,\upnu}^j(t)\in\mathcal R_{\upomega,k,\upnu} ^j$ such that $ \uprho \not\subset 2^{\upnu} R_t, \uprho \subset 2^{\upnu+1}R_t$. Thus the estimate above can be written in the form
\begin{equation}\label{eq:R+R}  
\quad |\mathsf{C}_{s,k} f  |^2    \lesssim \sum_{t\in \tiles_{s,k}} | \langle   f,\upphi_t\rangle|^2 \frac{ \ind_{R_t}   }{ |R_t| }+ \sum_{\upnu \geq 0} 2^{-8M_0 \upnu}\sum_{j=1}^{2^{8\upnu}}  \sum_{t\in \tiles_{s,k}}    |\langle   f,\upphi_{t}\rangle|^2 \frac{\ind_{\uprho_{s,k,\upnu}^j(t)}}{|\uprho_{s,k,\upnu}^j(t)|}.
\end{equation}

Observe that if  $t\in \tiles_{s,k}$
\[
\upphi_{t}\in\mathcal A_{t} ^{8M_0},\quad  \uprho  \in \mathcal R_{s,k},\quad \uprho \subset 2^{\upnu+1}R_t  \implies  2^{-4M\upnu}|\langle   f,\upphi_{t}\rangle|^2 \leq a_{t_{\uprho}}(f)
\]
where $t_\uprho=\uprho \times \Omega_{s,k} \in   \tiles_{s,k} $ is the unique tile with spatial localization given by  $\uprho$: this is because $2^{-4M\upnu}\upphi_t\in \mathcal A_{t_\uprho}^{M_0}$. We thus conclude that
\begin{equation}
\label{e:Cvkdec}
  |\mathsf{C}_{s,k} f|^2 \lesssim \sum_{t\in \tiles_{s,k}}  a_{t}(f) \frac{ \cic{1}_{R_t}}{|R_t|}.
\end{equation}
Comparing with the definition of $\Delta_\tiles$ given in \eqref{e:intrinsicsf} we may  summarize the  discussion in the  lemma below.

%%%%%%%%%%%%%%%%%%%%%%%%%%%%%% LEMMA LEMMA LEMMA
\begin{lemma} \label{l:modelsumconical}  Let $1<p<\infty$. Then
\[
\sup_{\|f\|_p=1} \Big\|\Big(\sum_{k\in\mathbb Z,\, \uptau\in \cic{\uptau}} |  C_\uptau^\circ S_k(f)|^2 \Big)^\frac12\Big\|_{p} \lesssim  \sup_{\|f\|_p=1} \big\|\Delta_{\tiles} (f) \big\|_{p}
\]
where 
\[
\tiles \coloneqq \bigcup_{s\in S}\bigcup_{k\in\Z} \tiles_{s,k}
\]
and $\tiles_{s,k}$ is defined in \eqref{eq:smoothconetiles}.
\end{lemma}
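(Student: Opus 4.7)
The plan is to combine the standard wave-packet reproducing formula for smooth conical frequency projections with the pointwise bound \eqref{e:Cvkdec} that was derived in the discussion immediately preceding the statement. This reduces the $L^p(\ell^2)$-norm on the left to the intrinsic square function $\Delta_\tiles$ on the right.

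First, invoking the time-frequency procedures of \cite[Section 6]{DDP}, I would realize each smooth conical projection as an average
\[
C_s^\circ S_k f = \int_\Theta \mathsf{C}_{s,k}^{(\theta)} f \, \d\mu(\theta), \qquad \mathsf{C}_{s,k}^{(\theta)}f = \sum_{t\in\tiles_{s,k}} \langle f,\upphi_t^{(\theta)}\rangle \upphi_t^{(\theta)},
\]
where $(\Theta,\mu)$ is a probability space parameterizing translates of the spatial grid $\mathcal D_{s,k}$ (together with any additional symmetries needed), and for each $\theta\in\Theta$ and $t\in\tiles_{s,k}$ the packet $\upphi_t^{(\theta)}$ lies in $\mathcal A_t^{8M_0}$. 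Applying Minkowski's inequality first in $\ell^2_{s,k}$ and then in $L^p$ would yield
\[
\Bigl\|\Bigl(\sum_{s\in S,\, k\in\Z}|C_s^\circ S_k f|^2\Bigr)^{\frac12}\Bigr\|_p \leq \int_\Theta\Bigl\|\Bigl(\sum_{s\in S,\, k\in\Z}|\mathsf{C}_{s,k}^{(\theta)} f|^2\Bigr)^{\frac12}\Bigr\|_p\, \d\mu(\theta).
\]

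Second, for every fixed $\theta\in\Theta$ the pointwise inequality \eqref{e:Cvkdec} applies to each $\mathsf{C}_{s,k}^{(\theta)}$; summing over $s\in S$ and $k\in\Z$, and using the inclusion $\mathcal A_t^{8M_0}\subset \mathcal A_t^{M_0}$ together with the very definition \eqref{e:intwcdef} of $a_t(f)$ as a supremum over $\mathcal A_t^{M_0}$, I would obtain
\[
\sum_{s\in S,\, k\in\Z}|\mathsf{C}_{s,k}^{(\theta)} f|^2 \lesssim \sum_{s\in S,\, k\in\Z}\sum_{t\in\tiles_{s,k}} a_t(f)\frac{\cic{1}_{R_t}}{|R_t|} = \Delta_\tiles(f)^2,
\]
with an implicit constant independent of $\theta$. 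Taking square roots, passing to $L^p$-norms, inserting into the previous display, and using $\mu(\Theta)=1$ then yields the lemma.

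The main obstacle is the initial reduction, namely recalling and assembling a wave-packet reproducing formula for $C_s^\circ S_k$ in which the packets $\upphi_t^{(\theta)}$ genuinely belong to $\mathcal A_t^{8M_0}$ uniformly in $\theta$; this requires some care since the smooth conical multiplier must be quantized on a two-parameter grid matching the tile eccentricities $2^{\ell_s}$. Once this reduction is in place, the remainder of the argument is essentially mechanical, because the hard pointwise estimate \eqref{e:Cvkdec}, whose derivation exploited the Schwartz decay of the wave packets and a dyadic disjointification at every scale $2^{\upnu}$, has already been performed.
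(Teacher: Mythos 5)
Your proof is correct and follows essentially the same route as the paper: realize $C_s^\circ S_k$ as an average of wave-packet sums $\mathsf{C}_{s,k}^{(\theta)}$ with packets in $\mathcal A_t^{8M_0}$, apply the pointwise domination \eqref{e:Cvkdec} of $|\mathsf{C}_{s,k}^{(\theta)}f|^2$ by the intrinsic coefficients, and conclude via Minkowski's integral inequality in $L^p(\ell^2)$. The paper states the lemma as a summary of exactly this discussion; your write-up simply makes the Minkowski averaging step explicit where the paper leaves it implicit.
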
 
The proof of the upper bound in Theorem \ref{thm:smoothcones} is then completed by juxtaposing the estimates of Lemmata \ref{lem:wLP} and \ref{l:modelsumconical} with  Theorem \ref{thm:smoothconeintrinsic}. For the optimality of the estimate see \S\ref{sec:cexconical}.
%%%%%%%%%%%%%%%%%%%%%%%%%%%%%% LEMMA LEMMA LEMMA

%%%%%%%%%%%%%%%%%%%%%%%%%%%%%% SECTION SECTION SECTION
\subsection{Proof of Theorem~\ref{thm:wdcones}} \label{ss:rts} The proof of Theorem~\ref{thm:wdcones} is necessarily more involved than its smooth counterpart Theorem~\ref{thm:smoothcones}. In particular we need to decompose each cone not only in the radial direction as before, but also in the directions perpendicular to the singular boundary of each cone. We describe this procedure below.

Consider a collection of intervals $\cic{\uptau}=\{\uptau\}$ as in the statement. By the same correspondence as in the proof of Theorem~\ref{thm:smoothcones} we pass to a family $\{\upomega_s\}_{s\in S}$ consisting of finitely overlapping arcs on $\mathbb S^1$  centered at $v_s ^\perp/|v_s ^\perp|$ and corresponding cones $C_s$. Note that the sectors $\{\Omega_{s,k}\}_{s\in S,k\in\Z}$, defined in \eqref{e:freqsconical} form a finitely overlapping cover of $\cup_{s\in S}C_s$. We remember here that $v_s=(1,s)$ and the endpoint of the interval $\upomega_s$ are given by $(v_{s^-} ^\perp,v_{s^+} ^\perp)$, and that the positive direction is counterclockwise.

Now, for each fixed $s\in S$ the cover $\{\Omega_{s,k,m}\}_{(k,m)\in\Z^2}$ defined in \eqref{eq:whitney}, \eqref{eq:whitney0}, is a Whitney cover if $\Omega_{s,k}$ in the product sense: for each $\Omega_{s,k,m}$ the distance from the origin is comparable to $2^k$ and the distance to the boundary is comparable to $2^{-|m||\upomega_s|}$. 

The radial decomposition in $k$ will be taken care of by the Littlewood-Paley decomposition $\{S_k\}_{k\in\Z}$, defined as in the proof of Theorem~\ref{thm:wdcones}. Now for fixed $s,k$ we consider a smooth partition of unity subordinated to the cover $\{\Omega_{s,k,m}\}_{m\in\Z}$. Note that one can easily achieve that by choosing  $\{ \upvarphi_{s,m}\}_{m<0}$ to be a one-sided (contained in $C_s$) Littlewood-Paley decomposition in the negative direction $v^-=v_{s^-}$ , and constant in the direction $(v^-)^\perp$ when $m<0$, and similarly one can define $ \upvarphi_{s,m}$ when $m>0$, with respect to the positive direction $v^+$. The central piece $\Omega_{s,k,0}$ corresponds to $ \upvarphi_{s_0}$ defined implicitly as 
\[
\upvarphi_{s,0} =\ind_{C_s}- \sum_{m\in\Z}\upvarphi_{s,m} .
\]
Now the desired partition of unity is $\pi_{s,k,m}(\xi)\coloneqq \ind_{C_s}(\xi)\upvarphi_{s,m}(\xi)\uppsi_k(\xi)=\upvarphi_{s,m}(\xi)\uppsi_k(\xi)$, where $\uppsi_k\coloneqq \uppsi(2^{-k}\cdot)$ with the $\uppsi$ constructed in the proof of Theorem~\ref{thm:smoothcones}. Remember that $ S_k f \coloneqq (\uppsi_k\hat f)^\vee$ and let us define $ \Phi_{s,m}f  \coloneqq (\upvarphi_{s,m} \hat f)^\vee$.

An important step in the proof is the following square function estimate in $L^p(\R^2)$, with $2\leq p<4$, that decouples the Whitney pieces in every cone $C_s$. It comes at a loss in $N$ which appears to be inevitable because of the directional nature of the problem.

%%%%%%%%%%%%%%%%%%%%%%%%%%%%%% LEMMA LEMMA LEMMA
\begin{lemma}\label{lem:decoupling} Let $\{C_s\}_{s\in S}$ be a family of frequency cones, given by a family of finitely overlapping arcs $\cic{\upomega}\coloneqq\{\upomega_s\}_{s\in S}$ as above. For $2\leq p <4$ there holds
	\[
\big\|\{C_sf\}|\big\|_{L^p(\R^2;\ell^2 _{\cic{\upomega}})}\lesssim \frac{1}{4-p}(\log\#S)^{\frac12-\frac{1}{p}} \|\{S_k \Phi_{s,m} f\}\|_{L^p(\R^2;\ell^2 _{\cic{\upomega}\times\Z\times\Z\}})}.
	\]
\end{lemma}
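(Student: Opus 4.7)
The strategy is to decouple the rough cone projection $C_s f$ in two successive stages: a radial decoupling via the isotropic Littlewood--Paley projections $\{S_k\}$ that carries no $N$-dependent loss, and an angular decoupling via the one-dimensional directional projections $\{\Phi_{s,m}\}$ that contributes the factor $(\log N)^{1/2-1/p}$.

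\emph{Step 1 (radial, no directional loss).} Since $C_s$ and $S_k$ are both Fourier multipliers they commute, and the family $\{S_k\}_{k\in\Z}$ is an isotropic, direction-independent Littlewood--Paley decomposition. The classical vector-valued Littlewood--Paley inequality on $L^p(\R^2;\ell^2)$, valid for $1<p<\infty$ with absolute constant, together with the scalar reverse LP bound applied fiberwise in $s$, yields
\[
\big\|\{C_s f\}_s\big\|_{L^p(\R^2;\ell^2_{\cic\upomega})} \lesssim_p \big\|\{C_s S_k f\}_{s,k}\big\|_{L^p(\R^2;\ell^2_{\cic\upomega\times\Z})}.
\]
This step is isotropic, so no logarithmic-in-$N$ loss is incurred.

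\emph{Step 2 (angular, with directional log-loss).} For each $(s,k)$, the Whitney cover $\{\Omega_{s,k,m}\}_{m\in\Z}$ of $\Omega_{s,k}$ has finite overlap and $\sum_m \upvarphi_{s,m}\equiv \ind_{C_s}$, so that $ C_s S_k f = \sum_{m\in\Z} \Phi_{s,m} S_k f$. For fixed $(s,k)$ the right-hand side is a smooth one-dimensional Littlewood--Paley decomposition of $C_s S_k f$ in the direction perpendicular to the bounding ray $r_{s^\pm}$, and a single-direction scalar LP inequality controls $\|C_s S_k f\|_p$ by $\|(\sum_m |\Phi_{s,m}S_kf|^2)^{1/2}\|_p$ with direction-independent constant. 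To vector-value this uniformly over the $N$-direction family $\cic\upomega$ we invoke the directional Rubio de Francia square function estimate Theorem~\ref{thm:rdfsmooth} of Section~\ref{sec:rdf} (applied to the finitely overlapping rectangular multipliers $\{\upvarphi_{s,m}\uppsi_k\}$ pointing in $N$ directions), itself an application of the intrinsic Rubio de Francia bound Theorem~\ref{thm:intrrdf} and, at its core, of the directional Carleson embedding Theorem~\ref{thm:carleson}. This yields, for $2\le p<4$, the estimate
\[
\big\|\{C_s S_k f\}_{s,k}\big\|_{L^p(\R^2;\ell^2_{\cic\upomega\times\Z})} \lesssim \frac{1}{4-p}(\log N)^{\frac12-\frac{1}{p}} \big\|\{\Phi_{s,m} S_k f\}_{s,k,m}\big\|_{L^p(\R^2;\ell^2_{\cic\upomega\times\Z\times\Z})}.
\]
Chaining Step 2 with Step 1 gives the lemma.

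\emph{Main obstacle.} The principal difficulty is the angular step: upgrading a scalar, fixed-direction Littlewood--Paley inequality into a vector-valued statement uniform over $N$ directions without losing worse than logarithmically in $N$. For each fixed $s$ the inequality is classical, but naive iteration over $s$ costs a polynomial factor in $N$ due to the Kakeya obstruction; the precise logarithmic upper bound requires the directional Carleson machinery developed in Section~\ref{sec:carleson}. Concretely, the restricted-type $L^4$ endpoint comes from Theorem~\ref{thm:carleson} with a $(\log N)^{1/4}$ loss and interpolates against the trivial orthogonality estimate at $p=2$ to yield the stated $(\log N)^{\frac12-\frac1p}$, while the factor $(4-p)^{-1}$ reflects the blow-up of the interpolation constant as $p\to 4^-$, where the underlying directional RdF estimate is itself on the verge of failure.
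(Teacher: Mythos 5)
Your Step 1 (radial decoupling via the isotropic Littlewood--Paley family $\{S_k\}$ with no directional loss) is in the spirit of the paper's argument, which phrases it through the Bennett--Harrison weighted inequality, Proposition~\ref{prop:beha}. The problem is Step 2: you invoke Theorem~\ref{thm:rdfsmooth}, but that is a \emph{forward} square function estimate --- it bounds $\|\{P_F^\circ f\}\|_{L^p(\R^2;\ell^2_\mathcal F)}$ by $(\log N)^{\frac12-\frac1p}(\log\log N)^{\frac12-\frac1p}\|f\|_p$, with $\|f\|_p$ on the right. What Lemma~\ref{lem:decoupling} requires is the \emph{opposite} structure: a decoupling (reverse) inequality controlling the coarser square function $\|\{C_s S_k f\}\|_{L^p;\ell^2_{s,k}}$ by the finer one $\|\{\Phi_{s,m}S_k f\}\|_{L^p;\ell^2_{s,k,m}}$, uniformly over $N$ directions. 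No amount of rearranging Theorem~\ref{thm:rdfsmooth} produces such a reverse inequality, so the crucial angular step is missing.

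The paper's actual argument for the angular step is a duality--weighted one: choosing $g\in L^{(p/2)'}$ dual to $\sum_s |C_sf|^2$, applying Proposition~\ref{prop:beha} once isotropically (your Step 1) and then once more \emph{in each of the three directions $v_{s^-},v_s,v_{s^+}$} (yielding the partition $\Phi_{s,m}$), one arrives at weights of the form $\M_{v_{s^\varepsilon}}^{[3]}\M^{[3]}g$. The pointwise bound $\M_v^{[3]}G\lesssim (r')^2(\M_v G^r)^{1/r}$ then reduces matters to the directional maximal operator $\M_{V^*}$ over the $O(N)$ directions $V^*$, which is handled by Katz's sharp $L^{q}$ bound $\|\M_{V^*}\|_{q\to q}\lesssim (\log N)^{1/q}$ for $q>2$. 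The constraint $1<r<p/(2(p-2))$, needed so that $(p/2)'/r>2$ for Katz's theorem to apply, forces $r'\sim (4-p)^{-1}$ as $p\to4^-$: this is where the factor $(4-p)^{-1}$ comes from. It does not arise from interpolation between $p=2$ and a restricted-type $p=4$ estimate as you suggest; Lemma~\ref{lem:decoupling} is proved directly at each $p\in(2,4)$, and the $(4-p)^{-1}$ blowup is built into the choice of $r$.

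To repair your proof you would need to replace the invocation of Theorem~\ref{thm:rdfsmooth} in Step~2 by a genuine vector-valued reverse Littlewood--Paley argument uniform over directions, and the only mechanism available in the paper for that purpose is the Bennett--Harrison weighted inequality combined with Katz's maximal bound. Relatedly, a direct appeal to a vector-valued LP theorem ``applied fiberwise'' would cost a factor polynomial in $N$ rather than $(\log N)^{\frac12-\frac1p}$, precisely because of the Kakeya obstruction you yourself identify as the main difficulty.
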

%%%%%%%%%%%%%%%%%%%%%%%%%%%%%% LEMMA LEMMA LEMMA

%%%%%%%%%%%%%%%%%%%%%%%%%%%%%% PROOF PROOF PROOF
\begin{proof} Observe that the desired estimate is trivial for $p=2$ so let us fix some $p\in (2,4)$. There exists some $g\in L^{q}$ with $q=(p/2)'=p/(p-2)$ such that
\[
A^2\coloneqq \big\|\{C_sf\}|\big\|_{L^p(\R^2;\ell^2 _{\upomega})} ^2 = \int_{\R^2}\sum_{s\in S}|C_s f|^2 g	
\]
and so by Proposition~\ref{prop:beha} we get 
\[
A^2\lesssim \sum_{k\in\Z} \sum_{s\in S} \int_{\R^2}|C_s S_k f|^2 \M^{[3]}g
\]
where we recall that $\M^{[3]}$ denotes three iterations of the Hardy-Littlewood maximal function $\M$. Fixing $s$ for a moment we use Proposition~\ref{prop:beha} in the directions $v_{s^-},v_s$ and $v_{s^+}$ to further estimate
\[
\int_{\R^2}|C_s f|^2 \M^{[3]}g\lesssim \sum_{m\in\Z}\sum_{\upvarepsilon\in\{-,0,+\}}\int_{\R^2}| S_k\Phi_{s,m} f|^2 \M_{v_{s^\upvarepsilon} } ^{[3]}\M^{[3]}g 
\]
where we adopted the convention $s^0\coloneqq s$ for brevity, and $\M_v$ is given by \eqref{eq:dirMv}. Remember also that $\Phi_{s,m}$ for $m>0$ corresponds to directions $s^+$ while $\Phi_{s,m}$ corresponds to directions $s^-$ for $m<0$, and to directions $s^0=s$ for $m=0$. Now for any $v\in \mathbb S^1$ and $r>1$ we have that
\[
\M_v ^{[3]} G \lesssim (r')^2 [\M_v G^r]^{\frac1r};
\]
see for example \cite{CP}. Thus $\M_{v_{s^\upvarepsilon} } ^{[3]}\M^{[3]}g \lesssim (r')^2 [\M_{V^*} [\M^{[3]} G]^r]^{\frac1r}$ where $\M_{V^*}f\coloneqq \sup_{v\in V^*}\M_v f$, where here we use $V^*\coloneqq \{(1,s):\, s\in S^*\}$ with $S^*$ as in \eqref{eq:s*}, and $\M_{V^*}f\coloneqq\sup_{w\in V^*}\M_w(f)$.

It is known \cite{KatzDuke} that $M_{V^*}$ maps $L^p(\R^2)$ to $L^p(\R^2)$ with a bound $(\log\#V^*)^{\frac1p}$  for $p>2$. As $p<4$ there exists a choice of $1<r<\frac{p}{2(p-2)}$ so that $\frac{p}{r(p-2)}>2$ and a theorem of Katz from \cite{KatzDuke} applies. Using this fact together with H\"older's inequality  proves the lemma.
\end{proof}
%%%%%%%%%%%%%%%%%%%%%%%%%%%%%% PROOF PROOF PROOF

The proof of Theorem~\ref{thm:wdcones} can now be completed as follows. For each $(s,k,m)\in S\times\Z\times \Z$ the operator $S_k \Phi_{s,m}$ is a smooth frequency projection adapted to the rectangular box $\Omega_{s,k,m}$. Following the same procedure that led to \eqref{e:Cvkdec} in the proof of Theorem~\ref{thm:smoothcones} we can approximate each piece $S_k \Phi_{s,m} f$ by an operator of the form 
\[
\mathsf{C}_{s^\upvarepsilon,k,m} f \coloneqq \sum_{t \in \tiles_{s^\upvarepsilon,k,m}} \langle f,\upphi_t\rangle \upphi_t,\qquad |\mathsf{C}_{s^\upvarepsilon,k,m} f|^2\lesssim \sum_{t \in \tiles_{s^\upvarepsilon,k,m}} a_t(f)\frac{\ind_{R_t}}{|R_t|},
\]
where $s^\upvarepsilon$ follows the sign of $m$ and coincides with $s$ if $m=0$. The collections of tiles $\tiles_{s^\upvarepsilon,k,m}$ are the ones given in \eqref{eq:tilesskm}. Now Lemma~\ref{lem:decoupling} and Theorem~\ref{t:isf} are combined to complete the proof of Theorem~\ref{thm:wdcones}.

%%%%%%%%%%%%%%%%%%%%%%%%%%%%%% SECTION SECTION SECTION 
\section{Directional Rubio de Francia square functions}\label{sec:rdf}In his seminal paper \cite{RdF}, Rubio de Francia proved a one-sided Littlewood-Paley inequality for arbitrary intervals on the line. This estimate was later extended by Journ\'e, \cite{Journe}, to the  case of rectangles ($n$-dimensional intervals) in $\R^n$; a proof more akin to the arguments of the present paper appears in \cite{LR}. The aim of this subsection is to present a generalization of the one-sided Littlewood-Paley inequality to the case of rectangles in $\R^2$ with sides parallel to a given set of directions. The set of directions is to be finite, necessarily, because of Kakeya counterexamples. 

As in the case of cones of \S\ref{sec:cone} we will present two versions, one associated with smooth frequency projections and one with rough. To set things up let $S$ be a finite set of slopes and $V$ be the corresponding directions. We consider a family of rotated rectangles $\mathcal F$ as in \S\ref{sec:intrinsicrdf} where $\mathcal F=\cup_{s\in S}\mathcal F_s$. For each $s\in S$ a rectangle $F\in\mathcal F_s$ is a rotation by $s$ of  an axis parallel rectangle, so that the sides of $R$ are parallel to $(v,v^\perp)$ with $v=(1,s)$. We will write $F=\mathrm{rot}_s(I_F\times J_F)+y_F$ for some $y_F\in \R^2$ in order to identify the axes-parallel rectangle $I_F\times J_F$ producing $F$ by an $s$-rotation; this writing assumes that $I_F\times J_F$ is centered at the origin.

Now for each $F\in\mathcal F$ we consider the rough frequency projection
\[
P_Ff(x)\coloneqq \int_{\R^2} \hat f(\xi)\ind_F(\xi) e^{ix\cdot\xi}\,\d \xi,\qquad x\in \R^2,
\]
and its smooth analogue
\[
P_F ^\circ f(x)\coloneqq \int_{\R^2} \hat f(\xi) \upgamma_F(\xi)e^{ix\cdot\xi}\, \d\xi,\quad x\in\R^2,
\]
where $\upgamma_R$ is a smooth function on $\R^2$, supported in $R$, and identically $1$ on $\mathrm{rot_s}(\frac12 I \times \frac12 J)$.

We first state the smooth square function estimate.

%%%%%%%%%%%%%%%%%%%%%%%%%%%%%% THEOREM THEOREM THEOREM
\begin{theorem}\label{thm:rdfsmooth} Let $\mathcal F$ be a collection of rectangles in $\R^2$ with sides parallel to $(v,v^\perp)$ for some $v$ in a finite set of directions $V$. Assume that $\mathcal F$ has finite overlap. Then
\begin{align}
& \left\| \{P_F ^\circ f\}  \right\|_{L^p(\R^2; \ell^2_{\mathcal F})} \lesssim_p (\log \#V )^{\frac{1}2-\frac1p}(\log\log \#V )^{\frac{1}2-\frac1p}\|f\|_p 
\end{align} 
for $2\leq p<4$, as well as the restricted type analogue valid for all measurable sets $E$ 
\begin{align}
&  \left\| \{P_F ^\circ(f\cic{1}_E)\}  \right\|_{L^4(\R^2; \ell^2_{\mathcal F})}  \lesssim (\log \#V )^{\frac{1}4}(\log\log \#V )^{\frac14}|E|^{\frac14}\|f\|_\infty.
\end{align}
The dependence on $\#V$ in the estimates above is best possible up the doubly logarithmic term.	
\end{theorem}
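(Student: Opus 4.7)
The approach closely mirrors that of Theorem~\ref{thm:smoothcones}, but with rectangles in place of cones; unlike the cone case, no radial Littlewood--Paley reduction is required, since each smooth rectangular projection $P_F^\circ$ is already localized at a single pair of product scales determined by the dimensions of $F$. For each $F\in\mathcal F_s$ we introduce the one-scale smooth tile collection
\[
\tiles_F^\circ \coloneqq \bigl\{t = R_t \times F:\, R_t \in \mathcal D_{s,\ell_J^F,\ell_I^F}\bigr\},\qquad \tiles^\circ\coloneqq \bigcup_{s\in S}\bigcup_{F\in\mathcal F_s}\tiles_F^\circ,
\]
where $\ell_I^F,\ell_J^F\in\Z$ are determined by $2^{\ell_I^F}<|I_F|\leq 2^{\ell_I^F+1}$ and analogously for $J_F$. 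This is the rectangular analogue of the smooth conical tile system of Section~\ref{sec:smoothtiles}, corresponding to retaining only the $(k_1,k_2)=(0,0)$ Whitney piece of the rough system $\tiles^{\mathcal F}$ from Section~\ref{sec:intrinsicrdf}.

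Repeating the wave packet computation leading to \eqref{e:Cvkdec} verbatim --- representing $P_F^\circ$ as an average of discrete operators $\sum_{t\in\tiles_F^\circ}\langle f,\upphi_t\rangle\upphi_t$ with $\upphi_t\in\mathcal A_t^{8M_0}$ and exploiting the Schwartz decay of the wave packets --- yields the pointwise domination
\[
|P_F^\circ f|^2 \lesssim \sum_{t\in\tiles_F^\circ} a_t(f)\frac{\ind_{R_t}}{|R_t|}.
\]
Summation in $F$, together with the finite overlap of $\mathcal F$, then gives $\sum_{F\in\mathcal F}|P_F^\circ f|^2 \lesssim \Delta_{\tiles^\circ}(f)^2$. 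No logarithmic loss is incurred at this stage: the smooth cutoffs localize each $P_F^\circ f$ directly at tile scale, so no decoupling of Whitney pieces is needed.

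It remains to transfer the proof of Theorem~\ref{thm:intrrdf} to the collection $\tiles^\circ$. The finite overlap of $\mathcal F$ ensures that the frequency components $\{F\}_{F\in\mathcal F}$ of tiles in $\tiles^\circ$ have bounded overlap, so Lemma~\ref{l:globalortho} controls $\mathsf{mass}_a\lesssim |E|$ for the coefficient sequence $a=\{a_t(f\ind_E)\}_{t\in\tiles^\circ}$ whenever $\|f\|_\infty\leq 1$. Lemma~\ref{l:localortho}, whose proof invokes Journ\'e's covering Lemma~\ref{l:journe} to extract the spatial tails of wave packets, verifies the $L^\infty$-normalized Carleson condition of Definition~\ref{def:carleson} for $a$. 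Finally, for each $s\in S$ the collection $\mathcal R_{\tiles_s^\circ}$ consists of parallelograms of arbitrary eccentricity in direction $s$, so $\M_{\mathcal R_{\tiles_s^\circ}}$ is a skewed strong maximal function satisfying the weak-$(p,p)$ assumption of Theorem~\ref{thm:carleson} with $\upgamma=1$, uniformly in $s$. Theorem~\ref{thm:carleson} then produces the claimed $(\log\#V)^{\frac12-\frac1p}(\log\log\#V)^{\frac12-\frac1p}$ bound for $2\leq p<4$ and its restricted weak-type $p=4$ analogue. The principal obstacle is precisely this Carleson verification, identical to the argument supporting Theorem~\ref{thm:intrrdf}; sharpness up to the doubly logarithmic term follows from the Kakeya-type examples underpinning Theorem~\ref{thm:smoothcones}.
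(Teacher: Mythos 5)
Your proposal is correct and follows essentially the same route as the paper: the single-scale tile system $\tiles'(F)$ with one frequency component per $F$, the pointwise domination of $|P_F^\circ f|^2$ by the intrinsic square function via the wave-packet computation from the conical case, verification of the Carleson condition via Lemmas~\ref{l:globalortho} and~\ref{l:localortho}, and an application of Theorem~\ref{thm:carleson} with $\upgamma=1$. One small inaccuracy: the sharpness argument the paper actually invokes is that of \S\ref{sec:rdfcex}, tailored to the directional Rubio de Francia square function, rather than the conical examples of Theorem~\ref{thm:smoothcones}, though both rest on the same Besicovitch construction.
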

%%%%%%%%%%%%%%%%%%%%%%%%%%%%%% THEOREM THEOREM THEOREM

%%%%%%%%%%%%%%%%%%%%%%%%%%%%%% REMARK REMARK REMARK 
\begin{remark}\label{rmrk:1paramrdf} We record a small improvement of the estimates above in some special cases. Suppose that for fixed $s\in S$ all the rectangles  $F\in\mathcal F_s$ have one side-length fixed, or that they have fixed eccentricity. In both these cases the collections of spatial components of the tiles needed to discretize these operators, $\mathcal R_{\tiles_s ^\mathcal F}\coloneqq\{R_t:\, t\in\tiles_s ^{\mathcal F}\}$, with $\tiles_s$ as in \eqref{eq:rdftiles}, give rise to maximal operators that are of weak-type $(1,1)$. Then Remark~\ref{rmrk:1paramrecs} shows that the estimates of Theorem~\ref{thm:rdfsmooth} hold without the doubly logarithmic terms, and as shown in \S\ref{sec:rdfcex} this is best possible.	
\end{remark}
%%%%%%%%%%%%%%%%%%%%%%%%%%%%%% REMARK REMARK REMARK

The rough version of this Rubio de Francia type theorem is slightly worse in terms of the dependence on the number of directions. The reason for that is that, as in the case of conical projections, passing from rough to smooth in the directional setting incurs a loss of logarithmic terms, essentially originating in the corresponding maximal function bound.

%%%%%%%%%%%%%%%%%%%%%%%%%%%%%% THEOREM THEOREM THEOREM
\begin{theorem}\label{thm:rdfrough} Let $\mathcal F$ be a collection of rectangles in $\R^2$ with sides parallel to $(v,v^\perp)$ for some $v$ in a finite set of directions $V$. Assume that $\mathcal F$ has finite overlap. Then the following square function estimate holds for $2\leq p<4$
\begin{align}
& \left\| \{P_F  f\}  \right\|_{L^p(\R^2; \ell^2_{\mathcal F})} \lesssim_p (\log \#V )^{\frac{3}{2}-\frac3p} (\log\log\#V)^{\frac12-\frac1p}\|f\|_p .
\end{align} 
\end{theorem}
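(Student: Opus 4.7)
The plan is to reduce the rough estimate to the smooth one of Theorem~\ref{thm:rdfsmooth} by means of a two-parameter directional decoupling that is the exact rectangular analogue of Lemma~\ref{lem:decoupling}. The total loss in passing from smooth to rough will be exactly $(\log \#V)^{1-\frac2p}$, which, multiplied with the smooth bound $(\log \#V)^{\frac12-\frac1p}(\log\log \#V)^{\frac12-\frac1p}$, produces the claimed estimate.

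First, for each $F=\mathrm{rot}_s(I_F\times J_F)+y_F\in \mathcal F_s$, one performs a two-parameter Whitney decomposition $\{F_{k_1,k_2}:\,(k_1,k_2)\in\N^2\}$ of $F$ with respect to the distance to the four sides of $F$, exactly as in \S\ref{sec:intrinsicrdf}. Choose smooth bumps $\upvarphi_{F,k_1,k_2}$ forming a partition of unity subordinated to this cover, with $\upvarphi_{F,k_1,k_2}$ a tensor product (after rotation) of one-sided one-dimensional Littlewood-Paley bumps in the $v$ and $v^\perp$ directions. Denote by $P^\circ_{F,k_1,k_2}$ the smooth frequency projection associated with the multiplier $\upvarphi_{F,k_1,k_2}$. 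Each piece $P^\circ_{F,k_1,k_2}f$ is a smooth frequency restriction to a rectangle parallel to $(v,v^\perp)$, and the enlarged collection $\mathcal F^\star\coloneqq \{F_{k_1,k_2}:F\in\mathcal F,(k_1,k_2)\in \N^2\}$ retains the finite overlap property of $\mathcal F$.

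Second, the crucial decoupling step is the following rectangular analogue of Lemma~\ref{lem:decoupling}: for all $2\leq p<4$,
\begin{equation}\label{eq:rdfdec}
\bigl\|\{P_F f\}\bigr\|_{L^p(\R^2;\ell^2_{\mathcal F})} \lesssim \frac{1}{(4-p)^2}(\log \#V)^{1-\frac2p}\bigl\|\{P^\circ_{F,k_1,k_2} f\}\bigr\|_{L^p(\R^2;\ell^2_{\mathcal F^\star})}.
\end{equation}
To prove \eqref{eq:rdfdec}, pair with $g\in L^{(p/2)'}$ and apply Proposition~\ref{prop:beha} (Bennett--Harrison) in direction $v$ (to decouple the Whitney index corresponding to the sides of $F$ orthogonal to $v$), followed by a second application in direction $v^\perp$ (to decouple the other Whitney index). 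Each of the two applications introduces an iterated one-dimensional Hardy--Littlewood maximal function along the corresponding direction, which is bounded by $\M_{V^*}$ after a mild eccentricity correction; here $V^*$ is the $O(1)$-enlargement of $V$. By Katz's directional maximal theorem, $\|\M_{V^*}\|_{p\to p}\lesssim (\log \#V)^{1/p}$ for $p>2$, so the two applications combined produce the advertised loss $(\log \#V)^{2/p\cdot(p/2-1)}=(\log \#V)^{1-2/p}$ via H\"older in the dual exponent.

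Finally, the right-hand side of \eqref{eq:rdfdec} is controlled by Theorem~\ref{thm:rdfsmooth} applied to the finitely overlapping family $\mathcal F^\star$, which is a collection of rectangles with sides parallel to the same set of directions $V$. This yields the smooth square function bound $(\log \#V)^{\frac12-\frac1p}(\log\log \#V)^{\frac12-\frac1p}\|f\|_p$, and combining with \eqref{eq:rdfdec} completes the proof. The main technical obstacle is the rigorous execution of the two-parameter decoupling step \eqref{eq:rdfdec}: the Bennett--Harrison inequality is a priori one-dimensional/radial, and one must verify that the two successive applications along $v$ and $v^\perp$ can be carried out with the correct orientation for each $F\in \mathcal F_s$ and that the iterated directional maximal operators can be uniformly bounded by an $L^p$-estimate for $\M_{V^*}$, independently of the two-parameter eccentricity structure within each $\mathcal F_s$.
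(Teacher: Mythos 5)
Your proof is correct and follows essentially the same strategy as the paper: pass to a two-parameter Whitney decomposition of each rectangle $F$, decouple via two directional applications of the Bennett--Harrison weighted Littlewood--Paley inequality (one along $v$, one along $v^\perp$) paired with Katz's $L^p$ bound $\|\M_{V\cup V^\perp}\|_{p\to p}\lesssim(\log\#V)^{1/p}$, and absorb the resulting $(\log\#V)^{1-2/p}$ loss into the smooth square function estimate. The only minor rerouting is at the final step: you invoke Theorem~\ref{thm:rdfsmooth} applied to the enlarged family $\mathcal F^\star$ of Whitney pieces as a black box, whereas the paper reduces the decoupled square function directly to the intrinsic model $\Delta_{\tiles^{\mathcal F}}$ of Theorem~\ref{thm:intrrdf}; since Theorem~\ref{thm:rdfsmooth} is itself proved through the same tile apparatus and Carleson embedding, the two routes are logically equivalent (and noncircular, as Theorem~\ref{thm:rdfsmooth} does not rely on the rough estimate).
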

%%%%%%%%%%%%%%%%%%%%%%%%%%%%%% THEOREM THEOREM THEOREM

The proofs of these theorems follow the by now familiar path of introducing local Littlewood-Paley decompositions on each multiplier, approximating with time-frequency analysis operators, establishing a directional Carleson condition on the wave-packet coefficients and finally applying Theorem~\ref{thm:carleson}. We will very briefly comment on the proofs below.

%%%%%%%%%%%%%%%%%%%%%%%%%%%%%% PROOF PROOF PROOF
\begin{proof}[Proof of Theorem~\ref{thm:rdfrough} and Theorem~\ref{thm:rdfsmooth}] We first sketch the proof of Theorem~\ref{thm:rdfrough} which is slightly more involved. The first step here is a decoupling lemma which is completely analogous to Lemma~\ref{lem:decoupling} with the difference that now we need to use two directional Littlewood-Paley decompositions while in the case of cones only one. This explains the extra logarithmic term of the statement.
	
Remember that $\mathcal F=\cup_s\mathcal F_s$ with $s=(1,v)$ for some $v\in V$; here $s$ gives the directions $(v,v^\perp)$ of the rectangles in $\mathcal F_s$. Using the finitely overlapping Whitney decomposition of \S\ref{sec:intrinsicrdf} we have for each $F\in\mathcal F_s$ a collection of tiles 
\[
\tiles_s(F)=\bigcup_{(k_1,k_2)\in\Z^2}\tiles_{s,k_1,k_2}(F)
\]
as in \eqref{eq:rdftiles}. Let us for a moment fix $s$ and $F\in\mathcal F_s$. The frequency components of the tiles in $\tiles_s(F)$ form a two-parameter Whitney decomposition of $F$, so let $\{\phi_{F,k_1,k_2}\}_{(k_1,k_2)\in\Z^2}$ be a smooth partition of unity subordinated to this cover and denote by $\Phi_{F,k_1,k_2}$ the Fourier multiplier with symbol $\phi_{F,k_1,k_2}$.

The promised analogue of Lemma~\ref{lem:decoupling} is the following estimate: for $2\leq p<4$ there holds
\begin{equation}\label{eq:2paramdecoupling}
\big\|\{P_F f\}|\big\|_{L^p(\R^2;\ell^2 _{\mathcal F})}\lesssim \frac{1}{(4-p)^2}(\log\#V)^{1-\frac{2}{p}} \|\{\Phi_{s,k_1,k_2} f\}\|_{L^p(\R^2;\ell^2 _{\mathcal F\times\Z\times\Z\}})}.
\end{equation}
The proof of this estimate is a two-parameter repetition of the proof of Lemma~\ref{lem:decoupling}, where one applies Proposition~\ref{prop:beha} once in the direction of $v$ and once in the direction of $v^\perp$. Using the familiar scheme we can approximate each $\Phi_{s,k_1,k_2}f$ by time-frequency analysis operators
\[
\mathsf{P}_{F,k_1,k_2}f\coloneqq \sum_{t\in \tiles_{s,k_1,k_2}(F)}\langle f,\upphi_t\rangle \upphi_t, \qquad|\mathsf{P}_{F,k_1,k_2}f|^2\lesssim \sum_{t\in \tiles_{s,k_1,k_2}(F)} a_t(f)\frac{\ind_{R_t}}{|R_t|}
\]
and by \eqref{eq:2paramdecoupling} the proof of Theorem~\ref{thm:rdfrough} follows by corresponding bounds for the intrinsic square function of Theorem~\ref{thm:intrrdf}, defined with respect to the tiles $\tiles^{\mathcal F}$ given by \eqref{eq:rdftiles}.

For Theorem~\ref{thm:rdfsmooth} things are a bit simpler as the decoupling step of \eqref{eq:2paramdecoupling} is not needed. Apart from that one needs to consider for each $F$ a new set of tiles which is very easy to define: If $F\in\mathcal F_s$ with $F=\mathrm{rot}_s(I_F\times J_F)+y_F$
\[
\tiles'(F)\coloneqq \big\{t=R_t\times F: \, R_t\in\mathcal D^2 _{s,\ell_J,\ell_I}\big\}
\]
and then $\tiles'\coloneqq \cup_{F\in\mathcal F}\tiles' (F)$. One can recover $P_F ^\circ$ by operators of the form
\[
\mathsf{P}^\circ _F f\coloneqq \sum_{t\in\tiles_s(F)} \langle f,\upphi_t\rangle \upphi_t,\qquad |\mathsf{P}^\circ _F f|^2\lesssim \sum_{t\in\tiles_s(F)} a_t(f)\frac{\ind_{R_t}}{|R_t|}
\]
as before. Using the orthogonality estimates of \S\ref{sec:orthocone} in Theorem~\ref{thm:carleson} yields the upper bound in Theorem~\ref{thm:rdfsmooth}. The optimality of the estimates in the statement of Theorem~\ref{thm:rdfsmooth} is discussed in \S\ref{sec:rdfcex}.
\end{proof}
%%%%%%%%%%%%%%%%%%%%%%%%%%%%%% PROOF PROOF PROOF

%%%%%%%%%%%%%%%%%%%%%%%%%%%%%% SECTION SECTION SECTION 
\section{The multiplier problem for the polygon}\label{sec:polygon} Let $\mathcal P=\mathcal P_{N}$ be a regular $N$-gon and $T_{\mathcal{P}_N}$ be the corresponding Fourier restriction operator on $\mathcal P$
\[
T_{\mathcal P}f (x) \coloneqq  \int_{\R^2} \widehat f(\xi) \ind_{\mathcal P}(\xi) \e^{ix\cdot \xi} \, \d \xi,\qquad x\in\R^2.
\]
In this subsection we prove Theorem~\ref{thm:polygon}, namely we will prove the estimate
\[
\left\|T_{\mathcal P_N} : L^p(\R^2) \right\| \lesssim (\log N)^{4\left|\frac12-\frac1p\right|} , \qquad \frac43 <p <4.
\]
The idea is to reduce the multiplier problem for the polygon to the directional square function estimates of Theorem~\ref{thm:rdfsmooth} and combine those with vector-valued inequalities for directional averages and directional Hilbert transforms.

We introduce some notation. The large integer $N$ is fixed throughout and left implicit in the notation. By scaling, it will be enough to consider a regular polygon $\mathcal P$ with the following geometric properties: first, $\mathcal P$ has   vertices 
\[
\{v_j =\e^{i\vartheta_j}:\, 1\leq j \leq N+1 \},\qquad v_j\coloneqq\exp( 2\uppi j/ N ),
\] 
on the unit circle $\mathbb S^1$ with $\vartheta_1=\vartheta_{N+1}=0$ and oriented counterclockwise so that $  \vartheta_{j+1}- \vartheta_{j}>0$. The associated Fourier restriction operator is then defined by
\[
T_{\mathcal P}f \coloneqq (\ind_{\mathcal P} \hat f)^\vee.
\]
The proof of the estimate of Theorem \ref{thm:polygon} for $T_{\mathcal P}$ occupies the remainder of this section: by self-duality of the estimate it will suffice to consider the range $2\leq p<4$. 

%%%%%%%%%%%%%%%%%%%%%%%%%%%%%% SECTION SECTION SECTION 
\subsection{A preliminary decomposition} Let $N$ be a large positive integer and take $\upkappa$  such that $2^{\upkappa-1}< N\leq 2^{\upkappa}$. For each $-2\upkappa\leq k\leq 0$  consider a smooth radial multiplier $m_k$ which is supported on the annulus 
\[
A_k\coloneqq \Big\{\xi\in\R^2:\, 1-\frac{2^{-k-1}}{2^{2\upkappa} }<  |\xi| <1- \frac{2^{-k-5}}{2^{2\upkappa}} \Big\}
\]
and is identically $1$ on the smaller annulus 
\[
a_k\coloneqq \Big\{\xi\in\R^2:\, 1-\frac{2^{-k-2}}{2^{2\upkappa} }<  |\xi| <1- \frac{2^{-k-4}}{2^{2\upkappa}} \Big\}.
\]
Now consider the corresponding radial multiplier operators $T_k$ 
\[
T_k f \coloneqq (m_k \hat f)^\vee, \qquad m_{\upkappa} \coloneqq \sum_{k=-2\upkappa} ^0 m_k.
\]
We note that $m_\upkappa$ is supported in the annulus  
\[
\Big\{\xi\in\R^2:\, \frac{1}{2}<|\xi|<1-\frac{2^{-5}}{2^{2\upkappa}}\Big\}.
\]
 With this in mind let us consider radial functions $m_0,m_{\mathcal P}\in\mathcal S(\R^2)$ with $0\leq m_0,m_{\mathcal P}\leq 1$ such that 
\begin{equation}
\label{e:pfpoly10}
\Big(m_0 +m_\upkappa +m_{\mathcal P}\Big)\ind_{\mathcal P}=\ind_{\mathcal P},
\end{equation}
with the additional requirement that 
\begin{equation}
\label{e:pfpoly11}
\mathrm{supp}(m_{\mathcal P})\subset  A_{\mathcal P}\coloneqq \big\{\xi\in\R^2: \, 1-  2^{-2\upkappa-3} \leq|\xi| \leq1 + 2^{-2\upkappa-3} \big\}.
\end{equation}
Defining 
\[ 
\begin{split}
&\widehat{T_0 f}  \coloneqq \widehat f  m_{0},  \qquad \widehat{T_\upkappa f}   \coloneqq   \widehat f m_{\upkappa} , \qquad 
\widehat{O_{\mathcal P}f}    \coloneqq    \widehat f  m_{\mathcal P} \ind_{\mathcal P} ,
\end{split}
\] 
identity \eqref{e:pfpoly10} implies that $T_{\mathcal P}= T_0 + T_\upkappa+ O_{\mathcal P}$. Observing that $T_0$ is bounded on $p$ for all $1<p<\infty$ with bounds $O_p(1)$  we have
\begin{equation} \label{e:pfpoly60}
\|T_{\mathcal P}\|_{L^p(\R^2)} \lesssim_p  1 +  \|T_{\upkappa}  \|_{L^p(\R^2)}+\|O_{\mathcal P}  \|_{L^p(\R^2)},\qquad 1<p<\infty.
\end{equation}

%%%%%%%%%%%%%%%%%%%%%%%%%%%%%% SECTION SECTION SECTION 
\subsection{Estimating $T_\upkappa$}  \label{sec:inner}
We aim for the estimate 
\begin{equation}\label{eq:innerannuli}
\|T_{\upkappa} f\|_{p} \lesssim \upkappa^{4(\frac12-{\frac1p})} \|f\|_p, \qquad 2\leq p<4.
\end{equation}
The case $p=2$ is obvious whence it suffices to prove the restricted type version at the endpoint $p=4$
\begin{equation}
\label{e:pfpoly21}
\|T_{\upkappa} (f\cic\ind_E)\|_{4} \lesssim \upkappa|E|^{\frac14} \|f\|_\infty.  
\end{equation}
Now we have that for any $g$
\[
|T_{\upkappa}g| =\Big|\sum_{k=-2\upkappa} ^0 T_k g\Big|\lesssim \Big(\sum_{k=-2\upkappa} ^0 |T_k g|^4\Big)^{\frac14}\upkappa^{\frac34}
\]
and thus
\begin{equation}\label{eq:4/3}
\|T_{\upkappa}g\|_4 \lesssim\upkappa^{\frac34} \Big(\sum_{k=-2\upkappa} ^0 \|T_k g\|_4 ^4\Big)^{\frac14}.
\end{equation}
Let $\{\upomega_{j}:\, j\in J\}$ be the collection of   intervals on $\mathbb S^1$ centered at $v_j\coloneqq\exp\left({{2\uppi i j/}{N}}\right)$ and of length $2^{-\upkappa}$. Note that these intervals have finite overlap and their centers $v_j$ form a $\sim 1/N$-net on $\mathbb S^1$. Now let $\{\upbeta_j:\, j\in J\}$ be a smooth partition of unity subordinated to the finitely overlapping open cover $\{\upomega_{j}:\, j\in J\}$ so that each $\upbeta_j$ is supported in $\omega_j$. We can decompose each $T_k$ as
\[
(T_k f)^{\wedge}(\xi)=\sum_{j\in J} m_k(|\xi|)\upbeta_j \Big(\frac{\xi}{|\xi|}\Big)\hat f(\xi)\eqqcolon \sum_{j\in J} m_{j,k}(\xi)\hat f(\xi),\eqqcolon\sum_{j\in  J} (T_{j,k} f)^{\wedge}(\xi), \qquad \xi\in \R^2.
\]
For $s_j\in S$ and $-2\upkappa\leq k\leq 0  $ we define the conical sectors
\begin{equation}
\label{e:amaranth1}
\Omega_{j,k}\coloneqq \left\{\xi  \in \R^2:\, \xi\in A_k,\: \xi/|\xi|\in \upomega_j\right\} 
\end{equation}
and note that each one of the multipliers $m_{k,j}$ is supported in $\Omega_{j,k}$.	Each $\Omega_{j,k}$ is an annular sector around the circle of radius $1-2^{-k}/2^{2\upkappa}$ of width $\sim 2^{-k}/2^{2\upkappa}$, where $-2\upkappa  \leq k \leq 0$. It is a known observation, usually attributed to C\'ordoba, \cite{CorPoly}*{Theorem 2} or C. Fefferman, \cite{Feff}, that for such parameters we have
\begin{equation}\label{eq:cordobaoverlap}
\sum_{j,j'\in J} \ind_{\Omega_{j,k}+\Omega_{j',k} }\lesssim 1.
\end{equation}
This pointwise inequality and Plancherel's theorem allows us to decouple the pieces $T_{j,k}$ in $L^4$: for each fixed $k$ as above we have
\begin{equation}\label{eq:cordecoupling}
\|T_kf\|_4 \lesssim \Big\| \big(\sum_{j\in J}|T_{j,k}f|^2\big)^\frac{1}{2}\Big\|_4;
\end{equation}
see also the proof of Lemma~\ref{l:Tj} below for a vector-valued version of this estimate. Combining the last estimate with \eqref{eq:4/3} and dominating the $\ell^2$-norm by the $\ell^1$-norm yields
\[
\begin{split}
&\|T_{\upkappa}f\|_4 \lesssim \upkappa^{\frac34}\Bigg( \int_{\R^2}\sum_{k=-2\upkappa} ^0  \big(\sum_{j\in J}|T_{j,k}f|^2\big)^{2} \Bigg)^{\frac14}\lesssim \upkappa^{\frac34}\Bigg( \int_{\R^2}\bigg[\sum_{k=-2\upkappa} ^0  \big(\sum_{j\in J}|T_{j,k}f|^2\big)^{2} \bigg]^{\frac12 2} \Bigg)^{\frac14}
\\
&\qquad\qquad \leq \upkappa^{\frac34}\Bigg( \int_{\R^2}\bigg[\sum_{k=-2\upkappa} ^0   \sum_{j\in J}|T_{j,k}f|^2  \bigg]^{ 2} \Bigg)^{\frac14}\eqqcolon \upkappa^{\frac34} \| \Delta_{J,\upkappa}f\|_4
\end{split}
\]
with
\[
\Delta_{J,\upkappa}f\coloneqq \Big(\sum_{k=-2\upkappa} ^0   \sum_{j\in J}|T_{j,k}f|^2\Big)^{\frac12}.
\]
But now note that $\{T_{j,k}\}_{j,k}$ is a finitely overlapping family of smooth frequency projections on a family of rectangles in at most $\sim N$ directions. Furthermore all these rectangles have one side of fixed length since $|\upomega_j|=2^{-\upkappa}$ for all $j\in J$. So Theorem~\ref{thm:rdfsmooth} with the improvement of Remark~\ref{rmrk:1paramrdf} applies to yield
\begin{equation}\label{eq:sqfnctpol}
\|\Delta_{J,\upkappa}f\|_4 \lesssim (\log\#N)^{\frac14}\|f\|_\infty |E|^\frac{1}{4}\simeq \upkappa^{\frac14}\|f\|_\infty |E|^{\frac14}.
\end{equation}
The last two displays establish \eqref{e:pfpoly21} and thus \eqref{eq:innerannuli}.

%%%%%%%%%%%%%%%%%%%%%%%%%%%%%% REMARK REMARK REMARK
\begin{remark} \label{rem:compcor}The term $T_{\upkappa}$ is also present in the argument of \cite{CorPoly}. Therein, an upper estimate of order $O(\upkappa^{\frac54})$ for $p$ near $4$ is obtained, by using the triangle inequality and the  bound $\sup\,\{\|T_{k}\|_{L^4(\R^2)}:\, {-2\upkappa\leq k\leq 0}\}\sim \upkappa^{\frac14}$ for the  smooth restriction to a single annulus.
\end{remark}
%%%%%%%%%%%%%%%%%%%%%%%%%%%%%% REMARK REMARK REMARK

%%%%%%%%%%%%%%%%%%%%%%%%%%%%%% THEOREM THEOREM THEOREM
\subsection{Estimating $O_{\mathcal P}$}\label{sec:OP} In this subsection we will prove the estimate
\begin{equation}\label{eq:OP}
\|O_{\mathcal P} f \|_p\lesssim \upkappa^{4(\frac12-\frac1p)}\|f\|_p.
\end{equation}
Let $\Phi$ be a smooth radial function with support in the annular region $\{\xi\in\R^2:\, 1-c2^{-2\upkappa}<|\xi|< 1+c2^{-2\upkappa}\}$, where $c$ is a fixed small constant,  and satisfying $0\leq \Phi\leq 1$. Let $\{\upbeta_j:\, j\in J\}$ be a  partition of unity on $\mathbb S^1$ relative to  intervals $\upomega_j$ as in  \S\ref{sec:inner}.
Define the Fourier multiplier operators on $\R^2$
\begin{equation}
\label{e:pfpoly50}
\widehat{T_j f} (\xi) \coloneqq \Phi(\xi) \upbeta_j\left( \frac{\xi}{|\xi|} \right)\hat f(\xi),\qquad \xi \in \R^2.
\end{equation}
The operators $T_j$ satisfy  a square function estimate
\begin{equation}
\label{e:pfpoly40}
\begin{split}
&\left\| \{  T_j f\} \right\|_{L^p(\R^2; \ell^2_J)} \lesssim   \upkappa^{\frac12-\frac1p}  \|f\|_p , \qquad    2\leq p<4,\\
&\left\| \{  T_j (f\cic{1}_E)\} \right\|_{L^4(\R^2; \ell^2_J)} \lesssim  \upkappa^{\frac14 }|E|^{\frac14}\|f\|_\infty,
\end{split}
\end{equation}
which follows in the same way as \eqref{eq:sqfnctpol}, by using Theorem~\ref{thm:rdfsmooth} with the improvement of Remark~\ref{rmrk:1paramrdf}. They also obey a vector-valued estimate
\begin{equation}
\label{e:pfpoly41}
\begin{split}
&\left\| \{  T_j f_j\} \right\|_{L^p(\R^2; \ell^2_J)} \lesssim   \upkappa^{\frac12-\frac1p}  \left\| \{    f_j\} \right\|_{L^p(\R^2; \ell^2_J)} , \qquad    2\leq p<4,
\\
&\left\| \{  T_j (f_j\cic{1}_F)\} \right\|_{L^4(\R^2; \ell^2_J)} \lesssim  \upkappa^{\frac14 }|F|^{\frac14}\left\| \{   f_j\} \right\|_{L^\infty(\R^2; \ell^2_J)}.
\end{split}
\end{equation}
These estimates are easy to prove. Indeed note that it suffices to prove the endpoint restricted estimate at $p=4$. Using the Fefferman-Stein inequality for fixed $j\in J$ we can estimate for each function $g$ with $\|g\|_2=1$
\[
\begin{split}
& \int_{\R^2} \sum_{j\in J}|T_j(f_j\ind_F)|^2  g \lesssim \sum_{j\in J}\int_{\R^2} |f_j\ind_F|^2 \M_{j} g \leq \left\| \{   f_j\} \right\|_{L^\infty(\R^2; \ell^2_J)} ^2 \int_F \sup_{j\in J}\M_j g 
\\
&\qquad\qquad\lesssim |F|^{\frac 12} \big\|\sup_{j\in J}\M_j g \big\|_{L^{2,\infty}(\R^2)},
\end{split}
\]
where $\M_j$ is the Hardy-Littlewood maximal function with respect to the collection of parallelograms in $\mathcal D^2 _{s_j,-2\upkappa,-\upkappa}$ with $s_j$ defined through $(-s_j,1)\coloneqq v_j$. Now $\sup_{j\in J}M_j$ is the maximal directional maximal function and the number of directions involved in its definition is comparable to $N\sim 2^{\upkappa}$. Then the maximal theorem of Katz from \cite{KatzDuke} applies to give the estimate 
\[
\big\|\sup_{j\in J}\M_j g\big \|_{L^{2,\infty}(\R^2)}\lesssim  \upkappa ^{\frac12}.
\]
This proves the second of the estimates \eqref{e:pfpoly41} and thus both of them by interpolation.

In the estimate for $O_{\mathcal P}$ we will  also need the following decoupling result.

%%%%%%%%%%%%%%%%%%%%%%%%%%%%%% LEMMA LEMMA LEMMA
\begin{lemma} \label{l:Tj}
Let $2\leq p<4$. Then
\[
\Big\| \sum_{j} T_j f_j \Big\|_{p} \lesssim \upkappa^{\frac12-\frac1p}\big\| \{    f_j\} \big\|_{L^p(\R^2; \ell^2_J)}.
\]
\end{lemma}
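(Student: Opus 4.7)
\smallskip

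\noindent\textbf{Proof plan for Lemma \ref{l:Tj}.} The plan is to establish the bound at the two endpoints $p=2$ and $p=4$ and then interpolate. The operator to be bounded takes the sequence $\{f_j\}_{j\in J}$ to the single function $\sum_j T_j f_j$; since the multipliers of $T_j$ are supported on finitely overlapping annular sectors $\Omega_j$ of dimensions roughly $2^{-\upkappa}\times 2^{-2\upkappa}$ tangent to $\mathbb S^1$ at $v_j$, Plancherel yields the $p=2$ estimate
\[
\Big\|\sum_{j\in J} T_j f_j\Big\|_2^{2}\lesssim \sum_{j\in J}\|T_j f_j\|_2^2\le \|\{f_j\}\|_{L^2(\R^2;\ell_J^2)}^2
\]
with absolute constant.

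\smallskip

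\noindent At $p=4$ the plan is to first pass to the square function by a C\'ordoba--Fefferman decoupling, and then apply the restricted type vector-valued bound \eqref{e:pfpoly41}. The geometric input is that the Minkowski sumsets $\{\Omega_{j}+\Omega_{j'}\}_{j,j'\in J}$ have uniformly bounded overlap; this is precisely the property \eqref{eq:cordobaoverlap} used previously, applied to the sectors defined by $\Phi$ and $\upbeta_j$. Writing $F\coloneqq \sum_j T_j f_j$ and expanding $\|F\|_4^2 = \|F^2\|_2$, each term $T_j f_j\cdot T_{j'}f_{j'}$ in $F^2$ has Fourier support in $\Omega_j+\Omega_{j'}$, so Cauchy--Schwarz in $(j,j')$ together with Plancherel gives
\[
\|F\|_4^4=\|F^2\|_2^2\lesssim \sum_{j,j'\in J}\|T_j f_j\cdot T_{j'}f_{j'}\|_2^2 = \Big\|\sum_{j\in J}|T_j f_j|^2\Big\|_2^2,
\]
i.e.\ $\|F\|_4\lesssim \|\{T_j f_j\}\|_{L^4(\R^2;\ell_J^2)}$. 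Combining with the second estimate of \eqref{e:pfpoly41} produces the restricted type endpoint
\[
\Big\|\sum_{j\in J}T_j(f_j\ind_E)\Big\|_4 \lesssim \upkappa^{\frac14}|E|^{\frac14}\|\{f_j\}\|_{L^\infty(\R^2;\ell^2_J)}
\]
valid for all measurable $E\subset \R^2$ and all sequences $\{f_j\}$.

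\smallskip

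\noindent It remains to interpolate the strong-type $(2,2)$ bound with the restricted strong-type $(4,4)$ bound in the vector-valued setting. This is carried out by a standard Marcinkiewicz argument applied to the sublinear operator $\{f_j\}\mapsto \sum_j T_jf_j$ acting between $L^p(\R^2;\ell^2_J)$ and $L^p(\R^2)$, yielding the claimed estimate with constant $\upkappa^{\frac12-\frac1p}$ for every $2\leq p<4$.

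\smallskip

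\noindent The main obstacle is the $L^4$-decoupling step; however this reduces to the same bounded-overlap geometry of sumsets already recorded in \eqref{eq:cordobaoverlap}, so no new analysis is required. Everything else is either orthogonality or interpolation.
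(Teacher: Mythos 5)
Your argument is essentially the same as the paper's: reduce to the $p=2$ and $p=4$ endpoints, obtain $p=4$ as a restricted type estimate via C\'ordoba's $L^4$ decoupling (the bounded overlap of the sumsets $\Omega_j+\Omega_{j'}$ from \eqref{eq:cordobaoverlap}) followed by the vector-valued bound \eqref{e:pfpoly41}, and conclude by restricted type interpolation. The only difference is that you spell out the $\|F^2\|_2$ computation and the Plancherel/Cauchy--Schwarz step in detail, whereas the paper merely cites the overlap property; the content is the same, and your minor slip of calling the map $\{f_j\}\mapsto\sum_j T_j f_j$ ``sublinear'' (it is linear) is immaterial.
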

%%%%%%%%%%%%%%%%%%%%%%%%%%%%%% LEMMA LEMMA LEMMA

%%%%%%%%%%%%%%%%%%%%%%%%%%%%%% PROOF PROOF PROOF
\begin{proof} Note that the case $p=2$ of the conclusion is trivial due to the finite overlap of the supports of the multipliers of the operators $T_j$. Thus by vector-valued restricted type interpolation of the operator
\[
\{f_j\} \mapsto O(\{f_j\})\coloneqq \sum_{j\in J} T_j f_j
\]
it suffices to prove a restricted type $L^{4,1}\to L^{4}$ estimate:
\begin{equation}
\label{e:pfpoly31}
\left\| O(\{f_j\})\right\|_{4} \lesssim \upkappa^{\frac14}|E|^{\frac14}
\end{equation}
 for functions with $\|\{f_j\}\|_{\ell^2}\leq \cic{1}_E$.
To do so note that the finite overlap of the supports of  $\widehat{T_j f_j}*\widehat{T_k f_k}$ over $j,k$, as in \eqref{eq:cordobaoverlap}, entail  
\[
\left\| O(\{f_j\}) \right\|_{4} \lesssim \left\| \{  T_j f_j \} \right\|_{L^4(\R^2; \ell^2_J)}
\]
and the restricted type estimate \eqref{e:pfpoly31} follows from   \eqref{e:pfpoly41}.
\end{proof}
%%%%%%%%%%%%%%%%%%%%%%%%%%%%%% PROOF PROOF PROOF

We come to the main argument for $O_{\mathcal P}$. 
Let $m_{\mathcal P}$ be as as in \eqref{e:pfpoly10}-\eqref{e:pfpoly11} and $T_j$ be the multiplier operators from \eqref{e:pfpoly50} corresponding to the choice $\Phi=m_{\mathcal P}$. Then obviously
\[
m_{\mathcal P} \hat f = \sum_{j\in J} \widehat {T_j f}.
\]
We may also tweak $\Phi$ and the partition of unity on $\mathbb S^1$ to obtain further multiplier operators $\widetilde T_j$ as in  \eqref{e:pfpoly50} and  such that the Fourier transform of the symbol of $\widetilde T_j$ equals one on the support of the symbol of  $T_j$. 
With these definitions in hand we estimate for $2<p<4$
\begin{equation}
\label{e:pfpoly32}
\begin{split}
\|O_{\mathcal P} f \|_p &= \Big\| \sum_{j} \widetilde T_j (T_j T_{\mathcal P} f) \Big\|_p
\lesssim  \upkappa^{\frac12-\frac1p}\left\| \{T_{\mathcal P}(T_j  f) \} \right\|_{L^p(\R^2; \ell^2_J)} 
\\
&= \upkappa^{\frac12-\frac1p}\left\| \{H_jH_{j+1}(T_j  f) \} \right\|_{L^p(\R^2; \ell^2_J)}.
\end{split}
\end{equation}
The first inequality is an application of Lemma~\ref{l:Tj} for $\widetilde T_j$. The last equality is obtained by observing that the polygon multiplier $T_{\mathcal P}$ on the support of each $T_j$ may be written as  a (sum of $O(1)$)  directional biparameter  multipliers $H_jH_{j+1}$ of iterated Hilbert transform type, where $H_j$ is a Hilbert transform along the direction $\nu_j$, which is the unit vector perpendicular to the $j$-th side of the polygon, and pointing inside the polygon; these are at most $\sim N$ such directions.

In order to complete our estimate for $O_{\mathcal P}$ we need the following Meyer-type lemma for directional Hilbert transforms of the form
\[
H_v f (x)\coloneqq \int_{\R^2} \hat f(\xi)\ind_{\{\xi\cdot v>0\}}e^{ix\cdot \xi}\,\d\xi,\qquad x\in\R^2.
\]

%%%%%%%%%%%%%%%%%%%%%%%%%%%%%% LEMMA LEMMA LEMMA
\begin{lemma}\label{lem:meyer} Let $V\subset \mathbb S^1$ be a finite set of directions and $H_v$ be the Hilbert transform in the direction $v$. Then for $\frac43< p<4$ we have
\[
\begin{split}
&\left\| \{ H_v f_v\} \right\|_{L^p(\R^2; \ell^2_V)} \lesssim   (\log\#V)^{\left|\frac12-\frac1p\right|}  \left\| \{    f_v\} \right\|_{L^p(\R^2; \ell^2_V)} .
\end{split}
\]
The dependence on $\#V$ is best possible.
\end{lemma}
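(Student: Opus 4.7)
The strategy is a weighted-norm duality argument based on the directional maximal bound of Theorem~\ref{thm:carleson}. Since $H_v^*=-H_v$, the claimed inequality is self-dual under $p\leftrightarrow p'$, so I would first reduce to treating $2\le p<4$; the case $p=2$ is trivial from orthogonality as each $H_v$ is an isometry on $L^2(\mathbb R^2)$ (up to sign).

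For $p\in(2,4)$, the plan is to invoke the classical one-dimensional Fefferman--Stein weighted inequality for the Hilbert transform, namely $\int_\R |Hg|^2 w\lesssim \int_\R |g|^2 Mw$, and apply it slice-wise in the direction $v^\perp$. Since $H_v$ acts as the $1$-D Hilbert transform on every affine line parallel to $v$, Fubini produces
\[
\int_{\R^2}|H_v g|^2 w\lesssim \int_{\R^2}|g|^2 M_v w \qquad \forall w\geq 0,
\]
where $M_v$ is the directional maximal operator defined in~\eqref{eq:dirMv}. I would take $g=f_v$, sum in $v\in V$, and use the pointwise domination $\sup_{v\in V} M_v w\le M_V w$ to arrive at
\[
\sum_{v\in V}\int_{\R^2}|H_v f_v|^2 w\,\lesssim\, \int_{\R^2} \|(f_v)\|_{\ell^2_V}^2\, M_V w.
\]
Dualizing the left-hand side against $w\in L^q(\R^2)$ with $q=(p/2)'>2$ reduces the whole estimate to an $L^q$-bound for $M_V$.

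At this point I would invoke Theorem~\ref{thm:carleson}, which (equivalently to Katz's theorem from \cite{KatzDuke}) yields $\|M_V\|_{L^q\to L^q}\lesssim (\log N)^{1/q}$ for every $q>2$. Since $p<4$ forces $q>2$, this input delivers
\[
\|(H_v f_v)\|_{L^p(\ell^2_V)}^2\lesssim (\log N)^{1/q}\|(f_v)\|_{L^p(\ell^2_V)}^2=(\log N)^{1-\frac2p}\|(f_v)\|_{L^p(\ell^2_V)}^2,
\]
and extracting square roots produces exactly the claimed $(\log N)^{\frac12-\frac1p}$ dependence. The range $\frac43<p\le 2$ then follows by the self-duality of the estimate.

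The main obstacle in executing this plan cleanly is making sure the weighted single-operator input involves a \emph{single} iteration of $M_v$: if one only has the Fefferman--Stein inequality with an iterated maximal function, which is what a generic Calder\'on--Zygmund argument provides, then the preceding duality scheme would inflate the exponent to $k(\frac12-\frac1p)$ for some $k\ge 2$, spoiling sharpness. For the best-possible assertion in the statement, one must then complement the upper bound by a matching Kakeya-type lower bound, produced by testing the bilinear form $\sum_v \langle H_v f_v,g_v\rangle$ on characteristic functions of Perron-tree components, as outlined in \S\ref{sec:cexmeyer}.
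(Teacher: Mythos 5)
Your reduction to $2\le p<4$ by self-duality and your closing remarks on the lower bound match the paper. However, the weighted input you invoke in the core step is not a theorem: the inequality $\int_\R |Hg|^2 w\lesssim\int_\R |g|^2 Mw$ (with a \emph{single} unbumped maximal function) is \emph{not} ``the classical Fefferman--Stein weighted inequality for the Hilbert transform.'' Fefferman--Stein's 1971 inequality concerns the Hardy--Littlewood maximal operator, not singular integrals, and the analogous statement for the Hilbert transform with $Mw$ on the right is the kind of statement in the circle of the Muckenhoupt--Wheeden conjecture, which is known to fail; what one does have is the power-bumped version $\int |Hg|^2 w\lesssim \int |g|^2 (M|w|^r)^{1/r}$ for any $r>1$, with constant blowing up as $r\to 1$. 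You correctly sense the danger, but misdiagnose the way out.

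The paper's proof uses precisely this $r$-bumped inequality in the direction $v$, namely $\int_{\R^2}|H_vf|^2 w\lesssim\int_{\R^2}|f|^2(\M_v|w|^r)^{1/r}$, and then exploits a scaling cancellation that defeats the fear you raise. Writing $q=(p/2)'$ and $\tilde q=q/r$, one has $\|(\M_V|g|^r)^{1/r}\|_{L^q}\le\|\M_V\|_{L^{\tilde q}\to L^{\tilde q}}^{1/r}\|g\|_{L^q}$; for $p<4$ one may choose $r>1$ small enough that $\tilde q>2$, so Katz's bound $\|\M_V\|_{L^{\tilde q}\to L^{\tilde q}}\lesssim(\log N)^{1/\tilde q}$ applies, and the resulting exponent is $(1/r)\cdot(1/\tilde q)=1/q=1-2/p$, independent of $r$. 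Thus the bump costs nothing asymptotically; there is no need for the (false) single-$M$ inequality, and the exponent does not get inflated to $k(\frac12-\frac1p)$. Your argument, as written, rests on an invalid weighted estimate and so does not go through; replacing it with the $r$-bumped inequality and repeating the exponent bookkeeping recovers the paper's proof.
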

%%%%%%%%%%%%%%%%%%%%%%%%%%%%%% LEMMA LEMMA LEMMA

%%%%%%%%%%%%%%%%%%%%%%%%%%%%%% PROOF PROOF PROOF
\begin{proof} It suffices to prove the estimate for $2<p<4$. The proof is by  way of duality and uses the following inequality for the Hilbert transform: for $r>1$ and $w$ a non-negative locally integrable function we have
\[
\int_{\R^2} |H_v f|^2 w \lesssim \int_{\R^2} |f|^2 (\M_v |w|^r	)^\frac1r
\]
with $\M_v$ given by \eqref{eq:dirMv}. See for example \cite{CP} and the references therein. Using this we have for a suitable $g\in L^{(p/2)'}$ of norm one that
\[\begin{split}
& \left\| \{ H_v f_v\} \right\|_{L^p(\R^2; \ell^2_V)} ^2=\int_{\R^2}\sum_{v\in V} |H_vf_v|^2 g \lesssim \sum_{v\in V}\int_{\R^2} |f_v|^2 (\M_v |g|^r	)^\frac1r 
\\
&\qquad\qquad\lesssim \|\{f_v\}\|_{L^p(\R^2;\ell^2 _V)} ^2 \big\|(\M_V  |g|^r	)^\frac1r \big\|_{L^{(p/2)'}(\R^2)}
\end{split}
\] 
with $\M_V g\coloneqq \sup_{v\in V}\M_vg$. Now for $2<p<4$ there is a choice of $1<r<\frac{p}{2(p-2)}$ so that $\frac{p}{r(p-2)}>2$. This means that the maximal theorem of Katz from \cite{KatzDuke} applies again to give
\[
 \big\|(\M_V  |g|^r	)^\frac1r \big\|_{L^{(p/2)'}(\R^2)}\lesssim (\log\#V)^{1 -\frac2p} 
\]
and so the proof of the upper bound is complete. The optimality is discussed in \S\ref{sec:cexmeyer}.
\end{proof}
%%%%%%%%%%%%%%%%%%%%%%%%%%%%%% PROOF PROOF PROOF

Let us now go back to the estimate for $O_{\mathcal P}$. The left hand side of \eqref{e:pfpoly32} contains a double Hilbert transform. By an iterated application of Lemma~\ref{lem:meyer} we thus have
\[
\left\| \{H_jH_{j+1}(T_j  f) \} \right\|_{L^p(\R^2; \ell^2_J)}\lesssim \upkappa ^{1-\frac2p}\left\| \{T_j  f) \} \right\|_{L^p(\R^2; \ell^2_J)}
\]
since the number of directions is $N=2^\upkappa$. The final estimate for the right hand side of the  display above is a direct application of \eqref{e:pfpoly40} which together with \eqref{e:pfpoly32} yields the estimate for $\|O_{\mathcal P} f \|_p$ claimed in \eqref{eq:OP}.

Now the decomposition \eqref{e:pfpoly60} together with the estimate of \S\ref{sec:inner} for $T_\upkappa$ and the estimate \eqref{eq:OP} for $O_{\mathcal P}$ complete the proof of Theorem~\ref{thm:polygon}.

%%%%%%%%%%%%%%%%%%%%%%%%%%%%%% REMARK REMARK REMARK
\begin{remark}\label{rmrk:invsf} Consider a function $f$ in $\R^2$ such that $\mathrm{supp}(\hat f)\subseteq A_\updelta$ where $A_{\updelta}$ is an annulus of width $\updelta^2$ around $\mathbb S^1$. Decomposing $A_{\updelta}$ into a union of $O(1/\updelta)$ finitely overlapping annular boxes of radial width $\updelta^2$ and tangential width $\updelta$ we  can write $f=\sum_{j\in J} T_j f$ where each $T_j$ is a smooth frequency projection onto one of these annular boxes, indexed by $j$. Then if $\widetilde T_j$ is a multiplier operator whose symbol is identically one on the frequency support of $T_j f$ and supported on a slightly larger box, we can write $f=\sum_j \widetilde{T_j}T_j f$, as in \eqref{e:pfpoly32} above. Then Lemma~\ref{l:Tj} yields
	\[
	\|f\|_{L^p(\R^2)} \lesssim (\log (1/\updelta))^{\frac{1}{2}-\frac{1}{p}} \|\{T_jf\}\|_{L^p(\R^2;\ell^2 _J)}.
	\]
This is the inverse square function estimate claimed in the remark after Theorem~\ref{thm:polygon} in the introduction.
\end{remark}
%%%%%%%%%%%%%%%%%%%%%%%%%%%%%% REMARK REMARK REMARK

%%%%%%%%%%%%%%%%%%%%%%%%%%%%%% SECTION SECTION SECTION 
\section{Lower bounds and concluding remarks}\label{sec:cex}

%%%%%%%%%%%%%%%%%%%%%%%%%%%%%% SECTION SECTION SECTION 
\subsection{Sharpness of Meyer's lemma}\label{sec:cexmeyer}
We briefly sketch the quantitative form of Fefferman's counterexample \cite{FeffBall} proving the sharpness of Lemma~\ref{lem:meyer}. Let $N$ be a large dyadic integer. Using a standard Besicovitch-type construction we produce rectangles $\{R_j: \, j=1,\ldots, N\}$ with sidelengths $1\times \frac1N$, so that the long side of $R_j$ is oriented along $v_j\coloneqq \exp(2\uppi i j/N)$. Now we consider the set $E$ to be the union of these rectangles and
\[
\left|E\coloneqq \bigcup_{j=1}^N R_j\right| \lesssim \frac{1}{\log N}.
\] 
Denoting by $\widetilde {R_j}$ the $2$-translate of $R_j$ in the direction of $v_j$ we gather that $ \{\widetilde {R_j}:j=1,\ldots,N\}$ is a pairwise disjoint collection. Furthermore if  $H_j$ is the Hilbert transform in direction $v_j$, there holds
\[
|H_j \cic{1}_{R_j}| \geq c \cic{1}_{\widetilde {R_j}}.
\]
 Therefore for all $1<p<\infty$
\[
\bigg\|\Big(\sum_{j=1}^N |H_j \cic{1}_{R_j}|^2\Big)^{\frac12} \bigg\|_{p} \geq c \bigg| \bigcup_{j=1}^N  \widetilde {R_j} \bigg|^{\frac1p} \geq c
\] 
while for $p\leq 2$
\[
\bigg\|\Big(\sum_{j=1}^N |\cic{1}_{R_j}|^2\Big)^{\frac12} \bigg\|_{p} \leq \bigg(\sum_{j=1}^N |R_j| \bigg)^{\frac12} |E|^{\frac1p-\frac12} \lesssim (\log N)^{\frac{1}{2}-\frac{1}{p}}.
\]
Self-duality of the square function estimate then entails the optimality of the estimate of Lemma~\ref{lem:meyer}.

%%%%%%%%%%%%%%%%%%%%%%%%%%%%%% SECTION SECTION SECTION 
\subsection{Sharpness of the directional square function bound}\label{sec:rdfcex} In this subsection we prove that the bound of Theorem~\ref{thm:rdfrough} is best possible, up to the doubly logarithmic terms. In particular we prove that the bound of Remark~\ref{rmrk:1paramrdf} is best possible.

We begin by showing a lower bound for the rough square function estimate
\begin{equation}\label{eq:cexrdf}
\|\{P_F g\}\|_{L^p(\R^2;\ell^2 _{\mathcal F})} \leq \|\{P_F\}: L^p(\R^2)\to L^p(\R^2;\ell^2 _{\mathcal F})\| \|g\|_p,\qquad 2\leq p <4,
\end{equation}
where the notations are as in \S\ref{sec:rdf}. Now as in Fefferman's argument in \cite{FeffBall} one can easily show that the estimate above implies the vector-valued inequality for directional averages, for directions corresponding to the directions of rectangles in $\mathcal F$. For this let $\#V=N$ where $V$ is the set of directions of rectangles in $\mathcal F$. Now consider functions $\{g_F\}_{F\in\mathcal F}$ with compact Fourier support; by modulating these function we can assume that $\mathrm{supp}(\widehat{g_F})\subset  B(c_F,A)$ for some $A>1$ and $\{c_F\}_{F\in\mathcal F}$ a $100AN$-net in $\R^2$. Then if  $F$ is a rectangle centered at $c_F$ with short side $1$ parallel to a direction $v_F\in V$ and long side of length $N$ parallel to $v_F ^\perp$, then we have that $|P_Fg_F|=|A_{v_F}g_F|$ where $A_{v_F}$ is the averaging operator
\[
A_{v_F}f(x)\coloneqq 2N \int_{|t|\leq 1/2} \int_{N|s|<1} f(x-tv_F-sv_F ^\perp)\,\d t \,\d s,\qquad x\in \R^2.
\]
Note that this is a single-scale average with respect to rectangles of dimensions $1\times 1/N$ in the directions $v_F,v_F ^\perp$ respectively. Since the frequency supports of these functions are well-separated we gather that for all choices of signs $\upvarepsilon_F\in\{-1,1\}$ we have
\[
\sum_{T\in\mathcal F}  |P_T G|^2\coloneqq \sum_{T\in\mathcal F} \Big|P_T \big(\sum_{F\in \mathcal F} \upvarepsilon_F g_F \big)\Big|^2 = \sum_{T\in\mathcal F}|P_T g_T|^2.
\]
Thus applying \eqref{eq:cexrdf} with the function $G$ as above and averaging over random signs we get
\[
\big\| \{A_{v_F}g_F\}\big\|_{L^p(\R^2;\ell^2 _{\mathcal F})}\leq  \|\{P_F\}: L^p(\R^2)\to L^p(\R^2;\ell^2 _{\mathcal F})\|  \|\{g_F\}\|_{L^p(\R^2;\ell^2 _{\mathcal F})},\qquad 2\leq p <4.
\]
Now we just need to note that as in \S\ref{sec:cexmeyer} we have that 
\[
A_{v_F}\ind_{R_F}\gtrsim \ind_{\widetilde {R_F}}
\]
where $\{R_F\}_{F\in\mathcal F}$ are the rectangles used in the Besicovitch construction in \S\ref{sec:cexmeyer}. As before we get
\[
\big\|\{P_F\}: L^p(\R^2)\to L^p(\R^2;\ell^2 _{\mathcal F})\big\| \gtrsim (\log \#V)^{\frac12-\frac1p} .
\]
For $p<2$ the square function estimate \eqref{eq:cexrdf} is known to fail even in the case of a single directions; see for example the counterexample in \cite{RdF}*{\S1.5}.

One can use the same argument in order to show a lower bound for the norm of the smooth square function
\begin{equation}\label{eq:cexrdfsmmoth}
\|\{P_F ^{\circ} g\}\|_{L^p(\R^2;\ell^2 _{\mathcal F})} \leq \|\{P_F ^\circ\}: L^p(\R^2)\to L^p(\R^2;\ell^2 _{\mathcal F})\| \|g\|_p,\qquad 2\leq p <4.
\end{equation}
Indeed, following the exact same steps we can deduce a vector-valued inequality for smooth averages
\[
A_{v_F} ^\circ f(x)\coloneqq  \int_{\R}\int_{\R}  f(x-tv_F-sv_F ^\perp) \upgamma_F(t,s) \,\d t \,\d s,\qquad x\in \R^2,
\]
where $\upgamma_F$ is the smooth product bump function used in the definition of $P_F ^\circ$ in \S\ref{sec:rdf}. By a direct computation one easily shows the analogous lower bound $A_{v_F} ^\circ\ind_{R_F}\gtrsim \ind_{\widetilde {R_F}}$ for the rectangles of the Besicovitch construction and this completes the proof of the lower bound for smooth projections as well.

%%%%%%%%%%%%%%%%%%%%%%%%%%%%%% SECTION SECTION SECTION 
\subsection{Sharpness of C\'ordoba's bound for radial multipliers}\label{sec:cexradial} 
Firstly we remember the definition of each radial multiplier $P_\updelta$: Let $\Phi:\R\to\R$ be a smooth function which is supported in $[-1,1]$ and define
\[
P_{\updelta} f(x) \coloneqq\int_{\R^2} \widehat f(\xi) \Phi\big(\updelta^{-1} (1-|\xi|) \big) \e^{ix\cdot \xi}\, \d \xi,\qquad x\in \R^2.
\] 
These smooth radial multipliers were used extensively in \S\ref{sec:polygon}. In \cite{CordBR} C\'ordoba has proved the bound
\[
\|P_\updelta f\|_p \lesssim (\log1/\updelta)^{\left|\frac12-\frac1p\right|}\|f\|_p,\qquad \frac34\leq p \leq 4. 
\]
In fact the same bound is implicitly proved in \S\ref{sec:polygon} in a more refined form, but only in the open range $p\in(3/4,4)$ with weak-type analogues at the endpoints. More precisely we have discretized $P_{\updelta}$ into a sum of pieces $\{P_{\updelta,j}\}_{j\in J}$, where each $P_{\updelta,j}$ is a smooth projection onto an annular box of width $\updelta$ and length $\sqrt{\updelta}$, pointing along one of $N$ equispaced directions $\nu_j$. Then it follows from the considerations in \S\ref{sec:polygon} that
\begin{equation}\label{eq:vvaluedradial}
\begin{split}
\| \{P_{\updelta,j} f\}\|_{L^p(\R^2;\ell^2 _J)} &\lesssim \log(1/\updelta)^{\frac12-\frac1p}\|f\|_p,\qquad 2<p<4,
\\
\| \{P_{\updelta,j} f\ind_F\}\|_{L^4(\R^2;\ell^2 _J)} &\lesssim \log(1/\updelta)^{\frac14}\|f\|_\infty |F|^\frac14.
\end{split}
\end{equation}
Obviously one gets the same bound by duality for $4/3<p<2$ while the $L^2$-bound is trivial. Now these estimates imply C\'ordoba's estimate for $P_{\updelta}$ in the open range $(3/4,4)$ by the decoupling inequality \eqref{eq:cordecoupling}, also due to C\'ordoba. On the other hand C\'ordoba's estimate is sharp. Indeed one uses the same rescaling and modulation arguments as in the previous subsection in order to deduce a vector-valued inequality for smooth averages starting by C\'ordoba's estimate. Testing this vector-valued estimate against the rectangles of the Besicovitch construction proves the familiar lower bound for $P_\updelta$ and thus also shows the optimality of the estimates in \eqref{eq:vvaluedradial}. We omit the details.

%%%%%%%%%%%%%%%%%%%%%%%%%%%%% SECTION SECTION SECTION
\subsection{Lower bounds for the conical square function}\label{sec:cexconical} We conclude this section with a simple example that provides a lower bound for the operator norm of the conical square function $\|C_\upomega(f):{\ell^2_{\cic{\upomega}}}\| $ of Theorem~\ref{thm:wdcones} and the smooth conical square function $\|C_\upomega^\circ:\ell^2_{\cic{\upomega}}\|$ of Theorem~\ref{thm:smoothcones}. The considerations in this subsection also rely on the Besicovitch construction so we adopt again the notations of \S\ref{sec:cexmeyer} for the rectangles $\{R_j:\, 1\leq j \leq N\}$ and their union $E$. Let $H^+ _j$ denote the frequency projection in the half-space $\{\xi\in\R^2:\, \xi \cdot v_j>0\}$ where $v_j\coloneqq\exp(2\uppi i j/N)$. We begin by observing that
\begin{equation}
\label{e:cexbes1}
  H_j ^+ f - H_{j+1}  ^+ f = C_{j}P_+f -C_{j}P_{-}f,
\end{equation}
where $P_{+},P_{-}$ denote the rough frequency projections in the upper and lower half-space respectively and $C_{v_j}$ is the   multiplier associated with the cone bordered by  ${v_j,v_{j+1}}$. Since $H_j ^+$ is a linear combination of the identity with the usual directional Hilbert transform $H_j$ along $v_j$ we conclude that
\[
\Big\|\Big(\sum_{j=1} ^N |(H_{j+1}-H_j)f|^2\Big)^\frac12\Big\|_p\lesssim \big\| \{C_{j}\}:  L^p(\R^2)\to L^p(\R^2;\ell^2 _j)\big\|\, \|f\|_p,\qquad 2\leq p <4.
\]
Now note that for each fixed $1\leq k\leq N$ we have
\begin{equation}
\label{e:cexbes0}
\ind_{\widetilde{R_k}} \sum_j (H_{j }-H_{j+1}) \ind_{R_j}   =  \ind_{\widetilde{R_k}}   H_k\ind_{R_k} \gtrsim \ind_{\widetilde{R_k}} 
\end{equation}
if $\widetilde{R_k}$ is 
a sufficiently large translation of $R_k$ in the positive  direction $v_k$. Thus
\[
\begin{split}
\Big|\int \ind_{\cup_k \widetilde{R_k}} \sum_{j=1}^N  (H_{j+1}-H_j)\ind_{R_j}\Big| 	\gtrsim \Big|\sum_k \int_{\widetilde{R_k}}\ind_{\widetilde{R_k}}\Big|\simeq 1.
\end{split}
\]
On the other hand the left hand side of the display above is bounded by a constant multiple of
\[
\big\|  \{C_{j}\}:  L^p(\R^2)\to L^p(\R^2;\ell^2 _j)\big\| \, \Big\| \big(\sum_j \ind_{R_j}^2\big)^{\frac12}\Big\|_{p'}\lesssim \big\| C_V:\, L^p(\R^2)\to L^p(\R^2;\ell^2)\big\| (\log N)^{\frac{1}{2}-\frac{1}{p'}}
\]
for all $2\leq p<4$. We thus conclude that
\[
\big\| \{C_{j}\}:  L^p(\R^2)\to L^p(\R^2;\ell^2)\big\|   \gtrsim (\log N)^ {\frac12-\frac1p} ,\qquad 2\leq p <4.
\]

We explain how this   counterexample can be modified to get a lower  square function estimate for the smooth cone multipliers $C^\circ_\upomega$ from \eqref{e:smoothcones}  matching the upper bound of Theorem \ref{thm:smoothcones}. For $t\in \R$ write $v_j^t\coloneqq\exp(2\uppi i (j+t)/N)$ and let $H_{j}^t$ and $H_{j}^{t,+} $ be the directional Hilbert transform and analytic projection along $v_j^t$, respectively. Let $\updelta>0$ be a small parameter to be chosen later and for each $1\leq j\leq N$ let $\upomega_j$ be an interval of size $\updelta N^{-1}$ centered around $2\pi j/N$. Arguing as in \eqref{e:cexbes1}, 
\begin{equation}
  C_{\upomega_j}^\circ P_+f - C_{\upomega_j}^\circ P_{-}f= \avgint_{N|t|<\updelta} \upalpha\left({\textstyle\frac{Nt}{\updelta}}\right) \left( H_j ^{t,+} f - H_{j+1}^{t,+}   f  \right) \d t 
\end{equation}
for a suitable nonnegative averaging function $\upalpha$ which equals   $1$ on $[-\frac14,\frac14]$.  Now, if $\widetilde{R_k}$ is again a  sufficiently large translation of $R_k$ in the positive direction $v_k$ and $\updelta$ is chosen sufficiently small depending only on the translation amount, the analogue of  \eqref{e:cexbes0}
\begin{equation}
\ind_{\widetilde{R_k}}\inf_{N|t|<\updelta}  \sum_{j=1}^N(H_{j}^t   -H_{j+1}^t )  \ind_{R_j} = \ind_{\widetilde{R_k}}  \inf_{N|t|<\updelta} H_k^t[\ind_{R_k}] \gtrsim \ind_{\widetilde{R_k}}.
\end{equation}
 The lower bound for $\| \{C_{\upomega_j}\}:  L^p(\R^2)\to L^p(\R^2;\ell^2 _j)\|$ then follows exactly as in the previous case.

%%%%%%%%%%%%%%%%%%%%%%%%%%%%%% REFERENCES REFERENCES REFERENCES 
\bibliography{squaref}
 \bibliographystyle{amsplain}

%%%%%%%%%%%%%%%%%%%%%%%%%%%%%% END-END-END
\end{document}